\newcommand{\noun}[1]{\textsc{#1}}
\DeclareRobustCommand{\greektext}{%
  \fontencoding{LGR}\selectfont\def\encodingdefault{LGR}}
\DeclareRobustCommand{\textgreek}[1]{\leavevmode{\greektext #1}}
\newlength{\lyxlabelwidth}      
\theoremstyle{plain}
\newtheorem{thm}{\protect\theoremname}[section]
\theoremstyle{remark}
\newtheorem{rem}[thm]{\protect\remarkname}
\theoremstyle{definition}
\newtheorem{defn}[thm]{\protect\definitionname}
\theoremstyle{plain}
\newtheorem{prop}[thm]{\protect\propositionname}
\theoremstyle{plain}
\newtheorem{lem}[thm]{\protect\lemmaname}
\theoremstyle{definition}
\newtheorem{example}[thm]{\protect\examplename}
\theoremstyle{plain}
\newtheorem{cor}[thm]{\protect\corollaryname}
	\newenvironment{elabeling}[2][]%
	{\settowidth{\lyxlabelwidth}{#2}
		\begin{description}[font=\normalfont,style=sameline,
			leftmargin=\lyxlabelwidth,#1]}
	{\end{description}}
\newcommand{\Yo}{\text{\usefont{U}{min}{m}{n}\symbol{'110}}}
\DeclareFontFamily{U}{min}{}
\DeclareFontShape{U}{min}{m}{n}{<-> dmjhira}{}
\newlength{\myline}           
\newcommandx*{\triplearrow}[4][1=0, 2=1]{
  	\draw[line width=\myline,double distance=3\myline,#3] #4;
  	\draw[line width=\myline,shorten <=#1\myline,shorten >=#2\myline,#3] #4;
}
\newcommand{\xyR}[1]{
  \xydef@\xymatrixrowsep@{#1}}
\newcommand{\xyC}[1]{
  \xydef@\xymatrixcolsep@{#1}}
\newcommand{\pullbackcorner}[1][dr]{
  \save*!/#1-1.2pc/#1:(1,-1)@^{|-}\restore}
\newcommand{\pushoutcorner}[1][dr]{
  \save*!/#1+1.2pc/#1:(-1,1)@^{|-}\restore}
\newcommand\reallywidehat[1]{%
\savestack{\tmpbox}{\stretchto{%
  \scaleto{%
    \scalerel*[\widthof{\ensuremath{#1}}]{\kern.1pt\mathchar"0362\kern.1pt}%
    {\rule{0ex}{\textheight}}
  }{\textheight}%
}{2.4ex}}%
\stackon[-6.9pt]{#1}{\tmpbox}%
}
\mathchardef\mhyphen="2D
\providecommand{\corollaryname}{Corollary}
\providecommand{\definitionname}{Definition}
\providecommand{\examplename}{Example}
\providecommand{\lemmaname}{Lemma}
\providecommand{\propositionname}{Proposition}
\providecommand{\remarkname}{Remark}
\providecommand{\theoremname}{Theorem}
\begin{document}
\begin{quotation}

\global\long\def\bA{\mathbf{A}}%

\global\long\def\sA{\mathscr{A}}%

\global\long\def\sB{\mathscr{B}}%

\global\long\def\C{\mathbf{C}}%

\global\long\def\bC{\mathbb{C}}%

\global\long\def\sC{\mathscr{C}}%

\global\long\def\sfC{\mathsf{C}}%

\global\long\def\sD{\mathscr{D}}%

\global\long\def\sfD{\mathsf{D}}%

\global\long\def\sE{\mathscr{E}}%

\global\long\def\bF{\mathbb{F}}%

\global\long\def\cF{\mathcal{F}}%

\global\long\def\sF{\mathscr{F}}%

\global\long\def\bG{\mathbf{G}}%

\global\long\def\bbG{\mathbb{G}}%

\global\long\def\cG{\mathcal{G}}%

\global\long\def\sG{\mathscr{G}}%

\global\long\def\sH{\mathscr{H}}%

\global\long\def\cI{\mathcal{I}}%

\global\long\def\sI{\mathcal{\mathscr{I}}}%

\global\long\def\fI{\mathfrak{I}}%

\global\long\def\bJ{\mathbf{J}}%

\global\long\def\cL{\mathcal{L}}%

\global\long\def\sL{\mathscr{L}}%

\global\long\def\fm{\mathfrak{m}}%

\global\long\def\sM{\mathscr{M}}%

\global\long\def\N{\mathbf{N}}%

\global\long\def\fN{\mathfrak{N}}%

\global\long\def\sN{\mathscr{N}}%

\global\long\def\cO{\mathcal{O}}%

\global\long\def\sO{\mathscr{O}}%

\global\long\def\bP{\mathbf{P}}%

\global\long\def\fp{\mathfrak{p}}%

\global\long\def\bQ{\mathbb{Q}}%

\global\long\def\fq{\mathfrak{q}}%

\global\long\def\sR{\mathscr{R}}%

\global\long\def\R{\mathbb{\mathbf{R}}}%

\global\long\def\RR{\overline{\sR}}%

\global\long\def\cS{\mathscr{\mathcal{S}}}%

\global\long\def\fS{\mathfrak{S}}%

\global\long\def\sS{\mathcal{\mathscr{S}}}%

\global\long\def\bU{\mathbf{U}}%

\global\long\def\sU{\mathfrak{\mathscr{U}}}%

\global\long\def\bV{\mathbf{V}}%

\global\long\def\sV{\mathscr{V}}%

\global\long\def\sfV{\mathsf{V}}%

\global\long\def\sW{\mathscr{W}}%

\global\long\def\sX{\mathcal{\mathscr{X}}}%

\global\long\def\bZ{\mathbb{\mathbf{Z}}}%

\global\long\def\Z{\mathbf{Z}}%

\global\long\def\frZ{\mathfrak{Z}}%


\global\long\def\e{\text{\ensuremath{\varepsilon}}}%

\global\long\def\G{\Gamma}%


\global\long\def\p{\mathcal{\prime}}%

\global\long\def\srn{\sqrt{-5}}%


\global\long\def\wh#1{\widehat{#1}}%


\global\long\def\nf#1#2{\nicefrac{#1}{#2}}%


\global\long\def\E{\mathsf{Ens}}%

\global\long\def\Fin{\mathsf{Fin}}%

\global\long\def\Ord{\mathsf{Ord}}%

\global\long\def\S{\mathsf{Set}}%

\global\long\def\sd#1{\left.#1\right|}%


\global\long\def\Aut{\text{\ensuremath{\mathsf{Aut}}}}%

\global\long\def\Arr#1{\mathsf{Arr}\left(#1\right)}%

\global\long\def\Cat{\mathsf{Cat}}%
\global\long\def\CAT{\mathsf{CAT}}%

\global\long\def\coker{\mathrm{coker}}%

\global\long\def\colim{\underrightarrow{\mathsf{lim}}}%

\global\long\def\lcolim#1{\underset{#1}{\underrightarrow{\mathsf{lim}}}}%

\global\long\def\llim{\underleftarrow{\mathsf{lim}}}%

\global\long\def\lllim#1{\underset{#1}{\underleftarrow{\mathsf{lim}}}}%

\global\long\def\laxlim{\underleftarrow{\mathsf{laxlim}}}%
\global\long\def\oplaxlim{\underleftarrow{\mathsf{oplaxlim}}}%

\global\long\def\laxcolim{\underrightarrow{\mathsf{laxlim}}}%
\global\long\def\oplaxcolim{\underrightarrow{\mathsf{oplaxlim}}}%

\global\long\def\End{\mathsf{End}}%

\global\long\def\Ext{{\rm Ext}}%

\global\long\def\id{\text{id}}%

\global\long\def\im{\text{im}}%

\global\long\def\Im{\text{Im}}%

\global\long\def\iso{\overset{\sim}{\rightarrow}}%
\global\long\def\osi{\overset{\sim}{\leftarrow}}%

\global\long\def\liso{\overset{\sim}{\longrightarrow}}%
\global\long\def\losi{\overset{\sim}{\longleftarrow}}%

\global\long\def\Hom{\mathsf{Hom}}%

\global\long\def\BHom{\mathbf{Hom}}%

\global\long\def\Homc{\text{\text{Hom}}_{\mathscr{C}}}%

\global\long\def\Homd{\text{\text{Hom}}_{\mathscr{D}}}%

\global\long\def\Homs{\text{\text{Hom}}_{\mathsf{Set}}}%

\global\long\def\Homt{\text{\text{Hom}}_{\mathsf{Top}}}%

\global\long\def\Mono{\mathsf{Mono}}%

\global\long\def\Mor{\text{\ensuremath{\mathsf{Mor}}}}%

\global\long\def\Ob{\mathsf{Ob}}%

\global\long\def\one{\mathbf{1}}%

\global\long\def\op{\mathsf{op}}%

\global\long\def\Pull{\mathsf{Pull}}%

\global\long\def\Push{\mathsf{Push}}%

\global\long\def\Psh#1{\mathsf{Psh}\left(#1\right)}%

\global\long\def\pt{\text{pt.}}%

\global\long\def\Sh#1{\mathsf{Sh}\left(#1\right)}%

\global\long\def\sh{\text{sh}}%

\global\long\def\Tor{\text{\text{Tor}}}%

\global\long\def\XoY{\nicefrac{X}{Y}}%

\global\long\def\XtY{X\times Y}%

\global\long\def\Yon{\Yo}%


\global\long\def\Ab{\mathsf{Ab}}%

\global\long\def\AbGrp{\text{\ensuremath{\mathsf{AbGrp}}}}%

\global\long\def\alg{\mathsf{alg}}%

\global\long\def\ann{\text{ann}}%

\global\long\def\Ass{{\rm Ass}}%

\global\long\def\Ch{\mathsf{Ch}}%

\global\long\def\CR{\mathsf{ComRing}}%

\global\long\def\Gal{\text{Gal}}%

\global\long\def\Grp{\mathsf{Grp}}%

\global\long\def\Inn{\mathsf{Inn}}%

\global\long\def\Frac{\text{Frac}}%

\global\long\def\mod{\text{\ensuremath{\mathsf{mod}}}}%

\global\long\def\norm{\text{Norm}}%

\global\long\def\Norm{\mathsf{Norm}}%

\global\long\def\Orb{\mathsf{Orb}}%

\global\long\def\rad{\text{\text{rad}}}%

\global\long\def\Ring{\mathbb{\mathsf{Ring}}}%

\global\long\def\sgn{\text{sgn}}%

\global\long\def\Stab{\mathsf{Stab}}%

\global\long\def\Syl{\text{Syl}}%

\global\long\def\ZpZ{\nicefrac{\mathbb{\mathbf{Z}}}{p\bZ}}%


\global\long\def\CompR{_{R}\mathsf{Comp}}%


\global\long\def\clm{\RR=\left(\sR,\mathbf{U},\cL,\bA\right)}%

\global\long\def\ebar{\overline{\sE}}%

\global\long\def\lrs{\mathsf{LocRngSpc}}%

\global\long\def\LRS{\mathsf{LRS}}%

\global\long\def\sHom{\mathscr{H}om}%

\global\long\def\RRing{\overline{\sR}\mathsf{-Ring}}%


\global\long\def\Ae{\nicefrac{A\left[\varepsilon\right]}{\left(\e^{2}\right)}}%

\global\long\def\Aff{\mathsf{Aff}}%

\global\long\def\AX{\nicefrac{\Aff}{X}}%

\global\long\def\AY{\nicefrac{\Aff}{Y}}%

\global\long\def\Der{\text{Der}}%

\global\long\def\et{\text{\ensuremath{\acute{e}}t}}%

\global\long\def\etale{\acute{\text{e}}\text{tale}}%

\global\long\def\Et{\mathsf{\acute{E}t}}%

\global\long\def\ke{\nf{k\left[\varepsilon\right]}{\varepsilon^{2}}}%

\global\long\def\Pic{\text{Pic}}%

\global\long\def\proj{\text{Proj}}%

\global\long\def\Qcoh#1{\mathsf{QCoh}\left(#1\right)}%

\global\long\def\rad{\text{\text{rad}}}%

\global\long\def\red{\text{red.}}%

\global\long\def\aScheme#1{\left(\spec\left(#1\right),\cO_{\spec\left(#1\right)}\right)}%

\global\long\def\resScheme#1#2{\left(#2,\sd{\cO_{#1}}_{#2}\right)}%

\global\long\def\Sch{\mathbb{\mathsf{Sch}}}%

\global\long\def\scheme#1{\left(#1,\cO_{#1}\right)}%

\global\long\def\SCR{\S^{\CR}}%

\global\long\def\Schs{\mathsf{\nicefrac{\Sch}{S}}}%

\global\long\def\ShAb#1{\mathsf{Sh}_{\AbGrp}\left(#1\right)}%

\global\long\def\Spec{\text{Spec}}%

\global\long\def\Sym{\text{Sym}}%

\global\long\def\Zar{\mathsf{Zar}}%


\global\long\def\co#1#2{\left[#1,#2\right)}%

\global\long\def\oc#1#2{\left(#1,#2\right]}%

\global\long\def\loc{\mathsf{\text{loc.}}}%

\global\long\def\nn#1{\left\Vert #1\right\Vert }%

\global\long\def\Re{\text{Re}}%

\global\long\def\supp{\text{supp}}%

\global\long\def\XSM{\left(X,\cS,\mu\right)}%


\global\long\def\grad{\text{grad}}%

\global\long\def\Homrv{\text{\text{Hom}}_{\mathbb{R}-\mathsf{Vect}}}%



\global\long\def\Cech{{\rm \check{C}ech}}%
\global\long\def\cCC{\check{\mathcal{C}}}%
\global\long\def\CC{\check{C}}%

\global\long\def\CH{\mathsf{CHaus}}%

\global\long\def\Cov{\mathsf{Cov}}%

\global\long\def\CW{\mathsf{CW}}%

\global\long\def\HT{\mathsf{HTop}}%

\global\long\def\Homt{\text{\text{Hom}}_{\mathsf{Top}}}%

\global\long\def\Homrv{\text{\text{Hom}}_{\mathbb{R}-\mathsf{Vect}}}%

\global\long\def\MT{\text{\ensuremath{\mathsf{Mor}}}_{\T}}%

\global\long\def\Open{\text{\ensuremath{\mathsf{Open}}}}%

\global\long\def\PT{\mathsf{P-Top}}%

\global\long\def\T{\mathsf{Top}}%


\global\long\def\Ad{\mathsf{Ad}}%

\global\long\def\Cell{\mathsf{Cell}}%

\global\long\def\Cov{\mathsf{Cov}}%

\global\long\def\Sp{\mathsf{Sp}}%

\global\long\def\Spectra{\mathsf{Spectra}}%

\global\long\def\ss{\widehat{\triangle}}%

\global\long\def\Tn{\mathbb{T}^{n}}%

\global\long\def\Sk#1{\textrm{Sk}^{#1}}%

\global\long\def\smash{\wedge}%

\global\long\def\wp{\vee}%


\global\long\def\base{\mathsf{base}}%

\global\long\def\comp{\mathsf{comp}}%

\global\long\def\funext{\mathsf{funext}}%

\global\long\def\hfib{\text{\ensuremath{\mathsf{hfib}}}}%

\global\long\def\I{\mathbf{I}}%

\global\long\def\ind{\mathsf{ind}}%


\global\long\def\lp{\mathsf{loop}}%

\global\long\def\pair{\mathsf{pair}}%

\global\long\def\pr{\mathbf{\mathsf{pr}}}%

\global\long\def\rec{\mathsf{rec}}%

\global\long\def\refl{\mathsf{refl}}%

\global\long\def\transport{\mathsf{transport}}%


\global\long\def\is{\triangle\raisebox{2mm}{\mbox{\ensuremath{\infty}}}}%

\global\long\def\Cof{\mathsf{Cof}}%

\global\long\def\sfW{\mathsf{W}}%

\global\long\def\Cyl{\mathsf{Cyl}}%

\global\long\def\Mono{\mathsf{Mono}}%

\global\long\def\t{\triangle}%

\global\long\def\tl{\triangleleft}%

\global\long\def\tr{\triangleright}%

\global\long\def\Shift{\mathrm{Shift}_{+1}}%

\global\long\def\Shiftd{\mathrm{Shift}_{-1}}%

\global\long\def\out{\mathrm{out}}%

\global\long\def\cN{\mathcal{N}}%

\global\long\def\fC{\mathcal{\mathfrak{C}}}%

\global\long\def\ev{\mathsf{ev}}%

\global\long\def\Map{\mathsf{Map}}%

\global\long\def\whp#1{\wh{#1}_{\bullet}}%

\global\long\def\bfTwo{\mathbf{2}}%

\global\long\def\bfL{\mathbf{L}}%

\global\long\def\bfR{\mathbf{R}}%

\global\long\def\sJ{\mathscr{J}}%

\global\long\def\Sing{\mathsf{Sing}}%

\global\long\def\Sph{\mathsf{Sph}}%

\global\long\def\whfin#1{\widehat{#1}_{\mathrm{fin}}}%

\global\long\def\whpfin#1{\widehat{#1}_{\mathrm{\bullet fin}}}%

\global\long\def\fin{\mathsf{fin}}%

\global\long\def\cT{\mathcal{T}}%

\global\long\def\Alg{\mathsf{Alg}}%

\global\long\def\st{\mathsf{st}}%

\global\long\def\IN{\mathsf{in}}%

\global\long\def\hr{\mathsf{hr}}%

\global\long\def\Fun{\mathsf{Fun}}%

\global\long\def\Th{\mathsf{Th}}%

\global\long\def\sT{\mathscr{T}}%

\global\long\def\Lex{\mathsf{Lex}}%

\global\long\def\FinSet{\mathsf{FinSet}}%

\global\long\def\Fib{\mathsf{Fib}}%

\global\long\def\FPS{\mathsf{FinPos}}%

\global\long\def\Mod{\mathsf{Mod}}%

\global\long\def\sfA{\mathsf{A}}%

\global\long\def\sfV{\mathsf{V}}%

\global\long\def\inner{\mathsf{inner}}%
\global\long\def\bfI{\mathbf{I}}%
\global\long\def\Kan{\mathsf{Kan}}%
\global\long\def\Berger{\mathsf{Berger}}%

\global\long\def\PR{\mathsf{P.R.}}%

\global\long\def\LFP{\mathsf{LFP}}%

\global\long\def\Mnd{\mathsf{Mnd}}%

\global\long\def\sfS{\mathsf{S}}%

\global\long\def\Adj{\mathsf{Adj}}%

\global\long\def\RAdj{\mathsf{RAdj}}%

\global\long\def\LAdj{\mathsf{LAdj}}%

\global\long\def\conj{\mathsf{conj}}%

\global\long\def\exact{\mathsf{exact}}%

\global\long\def\CwA{\mathsf{CwA}}%

\global\long\def\sf#1{\mathsf{#1}}%

\global\long\def\fA{\sf A}%

\global\long\def\fB{\sf B}%

\global\long\def\fE{\sf E}%

\global\long\def\fF{\sf F}%

\global\long\def\fG{\sf G}%

\global\long\def\fD{\sf D}%

\global\long\def\fJ{\sf I}%

\global\long\def\fJ{\sf J}%
\global\long\def\sfJ{\mathsf{J}}%

\global\long\def\fX{\mathfrak{\sf X}}%

\global\long\def\fY{\sf Y}%

\global\long\def\fZ{\sf Z}%

\global\long\def\bf#1{\mathsf{#1}}%

\global\long\def\cc{\mathsf{cc}}%
\global\long\def\Arity{\sf{Arity}}%
 
\global\long\def\conj{\sf{conj}}%
\global\long\def\Ins{\sf{Ins}}%
\global\long\def\bfOne{\mathbf{1}}%
\global\long\def\oplaxlim{\underleftarrow{\sf{oplaxlim}}}%
\global\long\def\ff{\mathsf{ff}}%
\global\long\def\cc{\mathsf{cc}}%
\global\long\def\Prof{\mathsf{Prof}}%
\global\long\def\sfA{\mathsf{A}}%
\global\long\def\sfB{\mathsf{B}}%
\global\long\def\sfC{\mathsf{C}}%
\global\long\def\sfD{\mathsf{D}}%
\global\long\def\sfE{\mathsf{E}}%
\global\long\def\sfF{\mathsf{F}}%
\global\long\def\sfG{\mathsf{G}}%
\global\long\def\ls{\mathsf{ls}}%
\global\long\def\CAT{\mathsf{CAT}}%
\global\long\def\Equ{\mathsf{Equ}}%
\global\long\def\Par{\mathsf{Par}}%
\global\long\def\Ran{\mathsf{Ran}}%
\global\long\def\Lan{\mathsf{Lan}}%
 
\global\long\def\StrCat{\mathsf{Str}\mhyphen\omega\mhyphen\Cat}%
\end{quotation}
\title{\noun{$\Z$-Categories I}}
\author{Paul Lessard}
\begin{abstract}
This paper is the first in a series of two papers, $\Z$-Categories
I and $\Z$-Categories II, which develop the notion of $\Z$-category,
the natural bi-infinite analog to strict $\omega$-categories, and
show that the $\left(\infty,1\right)$-category of spectra relates
to the $\left(\infty,1\right)$-category of homotopy coherent $\Z$-categories
as the pointed groupoids.

In this work we provide a $2$-categorical treatment of the combinatorial
spectra of \cite{Kan} and argue that this description is a simplicial
avatar of the abiding notion of homotopy coherent $\Z$-category.
We then develop the theory of limits in the $2$-category of categories
with arities of \cite{BergerMelliesWeber} to provide a cellular category
which is to $\Z$-categories as $\t$ is to $1$-categories or $\Theta_{n}$
is to $n$-categories. In an appendix we provide a generalization
of the spectrification functors of 20$^{\mathrm{th}}$ century stable
homotopy theory in the language of category-weighted limits.
\end{abstract}

\maketitle

\part*{Preface}

This paper is the first in a series of two papers, $\Z$-Categories
I and $\Z$-Categories II, which develop the notion of $\Z$-category,
the natural bi-infinite analog to strict $\omega$-categories, and
show that the $\left(\infty,1\right)$-category of spectra lives inside
the $\left(\infty,1\right)$-category of pointed homotopy coherent
$\Z$-categories as the full sub-$\left(\infty,1\right)$-category
of $\Z$-groupoids. That theorem is carried out in the language of
Cisinski model categories.\footnote{See \cite{Cisinski} or $\Z$-categories II Part II for a survey of
the relevant results translated from the French.} The work is split as it is for the first is a work of (nearly formal)
$2$-category theory, and the second is almost entirely restricted
to homotopical concerns.

In a sense these two works aims to synthesize the 1963 combinatorial
model of spectra of \cite{Kan}, with the 1983 homotopy hypothesis
of \cite{Grothendieck}, by way of Australian style $2$-category
theory and the $2$-category of categories arities of \cite{BergerMelliesWeber},
in the context of the 2006 theory of Cisinski model categories of
\cite{Cisinski} interpreted as presentations of categories of homotopy
coherent models of limit theories.

The work is aimed to be readable both by topologists and category
theorists and, as such, many elementary notions which are well known
to one group, but much less well so to the other, are treated somewhat
explicitly. We beg the reader's patience.

\part*{Introduction}

We begin with a long(er) and semi-technical introduction. This introduction
should be read with the foreknowledge that all will be explained in
more detail in the body of the text.

\section*{Part I: A $2$-categorical treatment of Kan's ``semisimplicial spectra''}

\subsection*{Kan's Suspension}

The work of \cite{Kan} begins with a description of the reduced suspension
which is adapted to the geometry/combinatorics of the simplex category.
Kan describes, albeit not in these modern terms, the reduced suspension
of pointed simplicial sets as the left Kan extension
\[
\xyR{0pc}\xyC{5pc}\Sigma_{K}=\mathsf{Lan}_{\Yo_{+}}\left(\vcenter{\vbox{\xymatrix{\t\ar[r] & \whp{\t}\\
\left[n\right]\ar@{|->}[r] & \nf{\t\left[n+1\right]_{+}}{\t\left[n\right]_{+}\vee\t\left[0\right]_{+}}
}
}}\right)
\]
which assigns to every freely pointed $n$-simplex a ``fattened''
circle whose cross section shares the geometry of the $n$-simplex.
We will note that this functor is a quotient of (a post-composition)
of the endofunctor 
\[
\xyR{0pc}\xyC{5pc}\xymatrix{K:\t\ar[r] & \t\\
\left[n\right]\ar@{|->}[r] & \left[n+1\right]
}
\]
The implied factoring will become important when we understand categories
of sequential spectrum objects and $\Omega$-spectrum objects as particular
$2$-categorical limits.

\subsection*{Sequential spectra and $\Omega$-spectra as oplax-limits and pseudo-limits
respectively}

Now, in a $1$-category there is a unique notion of the cone over
a diagram. However, in passing to $2$-categories, we immediately
come upon three further notions of cone over a $1$-diagram. Indeed,
in a $2$-category, we have in the strict cones an avatar of the $1$-categorical
notion of cone. But we also have pseudo-cones in which the role played
by identity $2$-cells in commuting triangles is instead played by
a coherent choice of invertible $2$-cells. More, we have and oplax
and lax cones wherein those $2$-cells remain coherent, i.e. functorial
in the 1-cells of the diagram, but for which the hypothesis of invertibility
is dropped (see Figures \ref{fig:strict-cone},\ref{fig:pseudo-cone},\ref{fig:oplax-cone},
and \ref{fig:lax-cone}). The oplax and lax conventions are then but
the two choices for the directions of the comparison $2$-cells. Importantly,
just as the data of a cone in a $1$-category constitutes a natural
transformation from a constant functor into the diagram, in a $2$-category,
the various notions of cones, pseudo-cones, oplax-cones, and lax-cones
constitute pseudo-natural transformations, oplax-natural transformations,
and lax-natural transformations\footnote{Importantly, these four distinct notions of natural transformation
define four distinct enrichements \emph{of} a category of categories
\emph{in} a category of categories.} from a constant functor into the diagram.

\selectlanguage{british}%
\begin{figure}
\begin{minipage}[t]{0.5\textwidth}%
\smallskip{}
\noindent\begin{minipage}[t]{1\columnwidth}%
\adjustbox{scale=0.5, center}{
$$
	\begin{tikzcd}
		&&& \bullet \\
		\\
		\\
		\\
		&&&&&& {X_{k}} \\
		\\
		{X_{i}} \\
		\\
		&&&& {X_{j}}
		\arrow["{X_{g}}"{description}, from=7-1, to=9-5]
		\arrow["{X_{fg}}"{description}, from=7-1, to=5-7]
		\arrow["{X_f}"{description}, from=9-5, to=5-7]
		\arrow[""{name=1, anchor=center, inner sep=0}, "{\varphi_{k}}"{description}, from=1-4, to=5-7]
		\arrow["{\mathrm{id}}"{description}, shorten <=15pt, shorten >=15pt, equal, from=1, to=7-1]
		\arrow["{\mathrm{id}}"{description}, shorten <=12pt, shorten >=12pt, equal, from=1, to=9-5, crossing over]
		\arrow["{\varphi_{i}}"{description}, from=1-4, to=7-1]
		\arrow[""{name=0, anchor=center, inner sep=0}, "{\varphi_{j}}"{description}, from=1-4, to=9-5, crossing over]
		\arrow["{\mathrm{id}}"{description}, shorten <=10pt, shorten >=10pt, equal, from=0, to=7-1, crossing over]
	\end{tikzcd}
$$
}%
\end{minipage}\caption{\label{fig:strict-cone}strict cone}
\end{minipage}%
\begin{minipage}[t][1\totalheight]{0.5\textwidth}%
\smallskip{}
\noindent\begin{minipage}[t]{1\columnwidth}%
\adjustbox{scale=0.5, center}{
$$
	\begin{tikzcd}
		&&& \bullet \\
		\\
		\\
		\\
		&&&&&& {X_{k}} \\
		\\
		{X_{i}} \\
		\\
		&&&& {X_{j}}
		\arrow["{X_{g}}"{description}, from=7-1, to=9-5]
		\arrow["{X_{fg}}"{description}, from=7-1, to=5-7]
		\arrow["{X_f}"{description}, from=9-5, to=5-7]
		\arrow[""{name=1, anchor=center, inner sep=0}, "{\varphi_{k}}"{description}, from=1-4, to=5-7]
		\arrow["{\widetilde{\alpha_{fg}}}"{description}, shorten <=15pt, shorten >=15pt, Rightarrow, from=1, to=7-1]
		\arrow["{\widetilde{\alpha_{f}}}"{description}, shorten <=12pt, shorten >=12pt, Rightarrow, from=1, to=9-5, crossing over]
		\arrow["{\varphi_{i}}"{description}, from=1-4, to=7-1]
		\arrow[""{name=0, anchor=center, inner sep=0}, "{\varphi_{j}}"{description}, from=1-4, to=9-5, crossing over]
		\arrow["{\widetilde{\alpha_{g}}}"{description}, shorten <=10pt, shorten >=10pt, Rightarrow, from=0, to=7-1, crossing over]
	\end{tikzcd}
$$
}%
\end{minipage}\caption{\label{fig:pseudo-cone}pseudo-cone}
\end{minipage}

\begin{minipage}[t][1\totalheight]{0.5\textwidth}%
\smallskip{}
\noindent\begin{minipage}[t]{1\columnwidth}%
\adjustbox{scale=0.5, center}{
$$
	\begin{tikzcd}
		&&& \bullet \\
		\\
		\\
		\\
		&&&&&& {X_{k}} \\
		\\
		{X_{i}} \\
		\\
		&&&& {X_{j}}
		\arrow["{X_{g}}"{description}, from=7-1, to=9-5]  
		\arrow["{X_{fg}}"{description}, from=7-1, to=5-7] 
		\arrow["{X_f}"{description}, from=9-5, to=5-7]    
		\arrow[""{name=1, anchor=center, inner sep=0}, "{\varphi_{k}}"{description}, from=1-4, to=5-7]
		\arrow["{\alpha_{fg}}"{description}, shorten <=15pt, shorten >=15pt, Rightarrow, from=1, to=7-1]
		\arrow["{\alpha_{f}}"{description}, shorten <=12pt, shorten >=12pt, Rightarrow, from=1, to=9-5, crossing over] 
		\arrow["{\varphi_{i}}"{description}, from=1-4, to=7-1]
		\arrow[""{name=0, anchor=center, inner sep=0}, "{\varphi_{j}}"{description}, from=1-4, to=9-5, crossing over]
		\arrow["{\alpha_{g}}"{description}, shorten <=10pt, shorten >=10pt, Rightarrow, from=0, to=7-1, crossing over]
	\end{tikzcd}
$$
}%
\end{minipage}\caption{\label{fig:oplax-cone}oplax cone}
\end{minipage}%
\begin{minipage}[t][1\totalheight]{0.5\textwidth}%
\smallskip{}
\noindent\begin{minipage}[t]{1\columnwidth}%
\adjustbox{scale=0.5, center}{
$$
	\begin{tikzcd}
		&&& \bullet \\
		\\
		\\
		\\
		&&&&&& {X_{k}} \\
		\\
		{X_{i}} \\
		\\
		&&&& {X_{j}}
		\arrow["{X_{g}}"{description}, from=7-1, to=9-5]  
		\arrow["{X_{fg}}"{description}, from=7-1, to=5-7] 
		\arrow["{X_f}"{description}, from=9-5, to=5-7]    
		\arrow[""{name=1, anchor=center, inner sep=0}, "{\varphi_{k}}"{description}, from=1-4, to=5-7]
		\arrow["{\alpha_{fg}}"{description}, shorten <=15pt, shorten >=15pt, Rightarrow, to=1, from=7-1]
		\arrow["{\alpha_{f}}"{description}, shorten <=12pt, shorten >=12pt, Rightarrow, to=1, from=9-5, crossing over]
		\arrow["{\varphi_{i}}"{description}, from=1-4, to=7-1]
		\arrow[""{name=0, anchor=center, inner sep=0}, "{\varphi_{j}}"{description}, from=1-4, to=9-5, crossing over]
		\arrow["{\alpha_{g}}"{description}, shorten <=10pt, shorten >=10pt, Rightarrow, to=0, from=7-1, crossing over]
	\end{tikzcd}
$$
}%
\end{minipage}\caption{\label{fig:lax-cone}lax cone}
\end{minipage}
\end{figure}

\selectlanguage{english}%
Now, it is no big leap to go from Adams spectra to sequential spectrum
objects valued in a category $\sf A$ with respect to an adjunction
$L\dashv R$. Indeed a sequential spectrum object is comprised of
an $\N$-indexed set of objects of the category $\sfA$, $\left(X_{i}\right)_{i\in\N}$,
together with a set of maps $\left(X_{i}\xrightarrow{\varphi_{i}}R\left(X_{i+1}\right)\right)_{i\in\N}$.
As we'll see however, these are but objects of the oplax limit 
\[
\oplaxlim\left\{ \cdots\xrightarrow{R}\sfA\xrightarrow{R}\sfA\right\} 
\]
in the $2$-category $\text{\ensuremath{\Cat}}$ (or $\CAT$ etc.
depending on size) where the data of those structure maps $\left(\varphi_{i}\right)_{i\in\N}$
are the $2$-cells of the associated classified cone, and $\Omega$-spectrum
objects are but objects of the pseudo-limit over that same diagram.
Indeed, when the functor $R$ is an iso-fibration, the pseudo-limit
and conical limit coincide, whence $\Omega$-spectrum objects are
in fact but objects of the strict limit in all cases we concern ourselves
with here.

So called coordinate-free spectra (see \cite{ElmendorfKrizMandellMay})
too admit this description after replacing the diagram 
\[
\cdots\xrightarrow{R}\sf A\xrightarrow{R}\sfA
\]
with the diagram
\[
\xyR{0pc}\xyC{5pc}\xymatrix{\sf{FinSub}\left(\mathscr{U}\right)^{\op}\ar[r] & \Cat\\
V\ar@{|->}[r] & \sf S_{\bullet}\\
V\subset W\ar@{|->}[r] & \Omega^{W-V}
}
\]
where the category $\sf{FinSub}\left(\sU\right)$ is the category
of sub-vector spaces of a particular choice of $\omega$-dimensional
vector space $\sU\liso\R^{\omega}$.

Returning to Kan's particular choice of suspension functor, and its
factoring we will get a natural transformation between right adjoint
functors 
\[
K_{\bullet}^{*}\Longrightarrow\Omega_{K}:\whp{\t}\longrightarrow\whp{\t}
\]
where $\Omega_{K}$ is the functor right adjoint to Kan's $\Sigma_{K}$.
These $2$-cells will then assemble to define an oplax natural transformation
\[
\xyR{2.5pc}\xyC{2.5pc}\xymatrix{\cdots\ar[r] & \whp{\t}\ar[r]^{\Omega_{K}}\ar[d]_{\id}\ar@{=>}[dl] & \whp{\t}\ar[d]^{\id}\ar@{=>}[dl]\ar[r]^{\Omega_{K}} & \whp{\t}\ar[d]^{\id}\ar@{=>}[dl]\\
\cdots\ar[r] & \whp{\t}\ar[r]_{K_{\bullet}^{*}} & \whp{\t}\ar[r]_{K_{\bullet}^{*}} & \whp{\t}
}
\]
whence a functor between oplax limits
\[
\oplaxlim\left\{ \cdots\rightarrow\whp{\t}\xrightarrow{\Omega_{K}}\whp{\t}\xrightarrow{\Omega_{K}}\whp{\t}\right\} \longrightarrow\oplaxlim\left\{ \cdots\rightarrow\whp{\t}\xrightarrow{K_{\bullet}^{*}}\whp{\t}\xrightarrow{K_{\bullet}^{*}}\whp{\t}\right\} 
\]
But since $K_{\bullet}^{*}$ is induced by an endomorphism of $\t$
we will show how may rewrite the category on the right-hand side in
a more elementary fashion.

\subsection*{Factorization through $\left[\left(\_\right)_{\bullet}^{\protect\op},\protect\S_{\bullet}\right]_{\bullet}:\protect\Cat^{\protect\sf{coop}}\protect\longrightarrow\protect\CAT$
and the collage construction}

Indeed, we will observe that the diagram 
\[
\cdots\rightarrow\whp{\t}\xrightarrow{K_{\bullet}^{*}}\whp{\t}\xrightarrow{K_{\bullet}^{*}}\whp{\t}
\]
 factors through the $2$-functor
\[
\left[\left(\_\right)_{\bullet},\S_{\bullet}\right]_{\bullet}:\Cat^{\sf{coop}}\longrightarrow\CAT
\]
 which takes a small category $\sfA$ and returns the large category
of $\S_{\bullet}$-enriched functors $\wh{\sf A}_{\bullet}\liso\left[\sf A_{\bullet}^{\op},\S_{\bullet}\right]$
from $\sf A_{\bullet}^{\op}$ to $\S_{\bullet}$ where $\sf A_{\bullet}$
is the free $\S_{\bullet}$ enriched category on $\sf A$. That $2$-functor
however sends $2$-categorical colimits to $2$-categorical limits.

As such, we will find that we have equivalences
\[
\reallywidehat{\colim\left\{ \cdots\leftarrow\t\xleftarrow{K}\t\xleftarrow{K}\t\right\} }_{\bullet}\liso\llim\left\{ \cdots\rightarrow\whp{\t}\xrightarrow{K_{\bullet}^{*}}\whp{\t}\xrightarrow{K_{\bullet}^{*}}\whp{\t}\right\} 
\]
 and
\[
\reallywidehat{\underrightarrow{\sf{oplaxlim}}\left\{ \cdots\leftarrow\t\xleftarrow{K}\t\xleftarrow{K}\t\right\} }_{\bullet}\liso\oplaxlim\left\{ \cdots\rightarrow\whp{\t}\xrightarrow{K_{\bullet}^{*}}\whp{\t}\xrightarrow{K_{\bullet}^{*}}\whp{\t}\right\} 
\]
Then, invoking the collage construction, it will be easy to describe
the oplax colimit 
\[
\underrightarrow{\sf{oplaxlim}}\left\{ \cdots\leftarrow\t\xleftarrow{K}\t\xleftarrow{K}\t\right\} 
\]
 as the small category on the set of objects
\[
\left\{ \sd{\left(\left[n\right],-k\right)}n\in\N,-k\in\Z_{\leq0}\right\} =\coprod_{-m\in\Z_{\leq0}}\Ob\left(\t\right)
\]
with $\Hom$-sets defined by the expression
\[
\t_{\mathsf{coll}}\left(\left(\left[n\right],-m\right),\left(\left[\ell\right],-\left(m+k\right)\right)\right)=\t\left(\left[n+k\right],\left[\ell\right]\right)
\]
with the composition laws coming from the right hand side of that
equality. The universal property of the strict colimit $\colim\left\{ \cdots\leftarrow\t\xleftarrow{K}\t\xleftarrow{K}\t\right\} $
is similarly enjoyed by $\t_{\st}$, the category with set of objects
\[
\Ob\left(\t_{\st}\right)=\left\{ \sd{\left[z\right]}z\in\Z\right\} 
\]
with $\Hom$-sets generated by maps 
\[
\left\{ \sd{d^{i}:\left[m-1\right]\longrightarrow\left[m\right]}i\in\N\right\} 
\]
and 
\[
\left\{ \sd{s^{j}:\left[m+1\right]\longrightarrow\left[m\right]}j\in\text{\ensuremath{\N\ }}\right\} 
\]
subject to (un-bounded) simplicial identities. The obvious functor
$\t_{\sf{coll}}\longrightarrow\t_{\st}$ then induces an adjoint triple
\[
\xyR{0pc}\xyC{3pc}\xymatrix{\reallywidehat{\t_{\sf{coll}}}_{\bullet}\ar@/^{1pc}/[rr]\ar@/_{1pc}/[rr] &  & \reallywidehat{\t_{\sf{st}}}_{\bullet}\ar[ll]_{\bot}^{\bot}}
\]
with the pullback functor in the middle recovering the inclusion (note
the direction of the arrow)
\[
\oplaxlim\left\{ \cdots\rightarrow\whp{\t}\xrightarrow{K_{\bullet}^{*}}\whp{\t}\xrightarrow{K_{\bullet}^{*}}\whp{\t}\right\} \longleftarrow\llim\left\{ \cdots\rightarrow\whp{\t}\xrightarrow{K_{\bullet}^{*}}\whp{\t}\xrightarrow{K_{\bullet}^{*}}\whp{\t}\right\} 
\]
More, the oplax limit
\[
\oplaxlim\left\{ \cdots\rightarrow\whp{\t}\xrightarrow{\Omega_{K}}\whp{\t}\xrightarrow{\Omega_{K}}\whp{\t}\right\} \longrightarrow\oplaxlim\left\{ \cdots\rightarrow\whp{\t}\xrightarrow{K_{\bullet}^{*}}\whp{\t}\xrightarrow{K_{\bullet}^{*}}\whp{\t}\right\} 
\]
 too arises as a pullback along a pointed functor 
\[
\t_{\sf{coll}\bullet}\longrightarrow\oplaxlim\left\{ \cdots\rightarrow\whp{\t}\xrightarrow{\Omega_{K}}\whp{\t}\xrightarrow{\Omega_{K}}\whp{\t}\right\} 
\]
 which we will construct explicitly.

\subsection*{Double duty and slick tricks}

Kan defines the category of semisimplicial spectra to be the full
subcategory of $\wh{\t_{\st}}_{\bullet}$ subtended by colimits of
pointed presheaves of the form
\[
\t_{\st}\nf{\left[z\right]}{d^{>n}=*}
\]
which is a stable $\left[z\right]$-simplex with something much like
the geometry of an $n$-simplex, as its set of stable $\left[z-1\right],\left[z-2\right],\dots,\left[z-n\right]$
simplices which are \emph{not }the basepoint mirror the combinatorics
of the usual $\left[n\right]$-simplex. We call this subcategory $\sf{LocFin}\left(\t_{\st}\right)$.

We will see that part of the adjoint triple induced by the functor
above factors through $\sf{LocFin}\left(\t_{\st}\right)$ as a co-reflective
subcategory of $\whp{\t_{\st}}$.
\[
\xyR{0pc}\xyC{1.5pc}\xymatrix{\reallywidehat{\t_{\st}}_{\bullet}\ar[rr]^{\bot} &  & \ar@/_{1pc}/[ll]\mathsf{LocFin}\left(\t_{\st}\right)\ar@/_{1pc}/[rr] &  & \oplaxlim\left\{ \cdots\xrightarrow{\Omega_{K}}\whp{\t}\right\} \ar[ll]^{\bot}}
\]
 We will also correct a small error made in \cite{Kan} - a related
one in made \cite{ChenKrizPultr} - which has it that the right hand
adjunction above is a reflective subcategory - this is not the case
and we provide an obvious counter-example. Instead, if we define $\sf{LocSph\left(\t_{\st}\right)}$
to be the full subcategory of $\whp{\t_{\st}}$ on pointed presheaves
which are colimits of objects of the form
\[
S^{\left(z-n\right)}\left[n\right]=\nf{\t_{\st}\left[z\right]}{\left(d^{n}d^{n-1}\cdots d^{1}d^{0}=d^{>n}=*\right)}
\]
which are quotients of $\t_{\st}\nf{\left[z\right]}{d^{>n}=*}$ which
me may think as a $\left(z-n\right)$-sphere with an $\left[n\right]$-simplex
worth of nontrivial faces\footnote{thinking in this way a banana is an arrow with $\sim5$ faces},
then we then have a sequence of adjunctions
\[
\xyR{0pc}\xyC{1.5pc}\xymatrix{\reallywidehat{\t_{\st}}_{\bullet}\ar[rr]^{\bot} &  & \ar@/_{1pc}/[ll]\mathsf{LocFin}\left(\t_{\st}\right)\ar@/_{1pc}/[rr] &  & \ar[ll]^{\bot}\sf{LocSph}\left(\t_{\st}\right)\ar@/_{1pc}/[rr] &  & \oplaxlim\left\{ \cdots\xrightarrow{\Omega_{K}}\whp{\t}\right\} \ar[ll]^{\bot}}
\]
where the rightmost adjunction is a reflective subcategory and the
left two adjunctions are co-reflective subcategories. As we will see
however in developing model category structures on these categories
the error becomes irrelevant as the data lost in the passage from
$\sf{LocFin}\left(\t_{\st}\right)$ to $\sf{LocSph}\left(\t_{\st}\right)$
are coherently contractible.

\subsection*{Homotopy-coherent models of...?}

Kan goes on describe a horn-filling condition for mapping spaces of
pointed presheaves $\t_{\st}^{\op}\longrightarrow\S_{\bullet}$ in
$\sf{LocFin}\left(\whp{\t_{\st}}\right)$. In \cite{Brown} this is
replaced with an easier description. Brown defines the set of stable
horns
\[
\text{\ensuremath{\underset{\sf B}{\Lambda}=}}\left\{ \sd{\underset{\sf B}{\Lambda^{i}}\left[n-k\right]\hookrightarrow S^{\left(-k\right)}\left[n\right]}n\in\N,k\in\N,0\leq i\leq k\right\} 
\]
where the stable horns $\underset{\sf B}{\Lambda}^{i}\left[n-k\right]$
are defined much as the simplicial horns $\Lambda^{i}\left[n\right]$
of $\t\left[n\right]$ are. Indeed, in \cite{Brown} it was proven
that this set comprises a generating set of acyclic cofibrations for
a (left transfer of) a pointed Cisinski model category\footnote{again of course not in this much later language.}
which was then proven, in \cite{BousfieldFriedlander}, to be Quillen
equivalent to the Bousfield-Friedlander model structure on sequential
spectra valued in pointed simplicial sets. The promised irrelevance
of the distinction between $\sf{LocFin}\left(\t_{\st}\right)$ and
$\sf{LocSph}\left(\t_{\st}\right)$ is that the adjunction between
them is a Quillen equivalence.

But Cisinski model categories are presentations of $\left(\infty,1\right)$-categories
of homotopy coherent models for limit theories. Consider, for example,
Joyal's model structure for quasi-categories. That model structure
is the minimal Cisinski model structure on the category of simplicial
sets in which the spine inclusions are trivial cofibrations, and the
generating set $\Lambda^{\infty}\left(\sf V\right)$ of anodyne extensions
for that model structure identify the fibrant objects as those which
are homotopy orthogonal to the spine inclusions. But strict orthogonality
of a simplicial set with respect to the set of spine inclusions is
equivalent to the so-called Segal condition on a simplicial set $X$,
\[
X\left(\left[n\right]\right)\liso X\left(\left[1\right]\right)\underset{X\left(\left[0\right]\right)}{\times}X\left(\left[1\right]\right)\underset{X\left(\left[0\right]\right)}{\times}\cdots\underset{X\left(\left[0\right]\right)}{\times}X\left(\left[1\right]\right)
\]
It is thus that we identify quasi-categories as providing a presentation
of the $\left(\infty,1\right)$-category of homotopy coherent $1$-categories.

Returning to the topic of Kan's model then one is wont to ask: what
essentially algebraic structure are Kan spectra to be understood as
homotopy coherent models of?

\subsection*{A Globular perspective on Kan's suspension functor}

In a sense the purely simplicial geometry obscures something of Kan's
suspension. Quasi-categories are as economical as they are precisely
because the directed higher cells are obscure - everything is a composition
datum. Thinking globularly however, or for that matter complicially,
we see that since the simplex category $\t$ is a full subcategory
of the category $\Cat$ of small categories we may think of Kan's
suspension as taking each simplex, thought of as a $1$-category,
and returning the $2$-category with a single object and that original
simplex as its unique non-trivial $\Hom$-category. In this view,
Kan's suspension is but the simplicial avatar of the endofunctor 
\[
\Sigma_{\omega}:\StrCat_{\bullet}\longrightarrow\StrCat_{\bullet}
\]
which takes an $\omega$-category and returns the $\omega$-category
with a single object and that original $\omega$-category as its unique
non-trivial $\Hom$-$\omega$-category. Letting $\Omega_{\omega}$
denote the functor right adjoint to $\Sigma_{\omega}$, it is clear
that the diagram which begat sequential spectra should then be understood
an avatar of the diagram
\[
\cdots\rightarrow\StrCat_{\bullet}\xrightarrow{\Omega_{\omega}}\StrCat_{\bullet}\xrightarrow{\Omega_{\omega}}\StrCat_{\bullet}
\]
and the diagram 
\[
\cdots\rightarrow\whp{\t}\xrightarrow{K_{\bullet}^{*}}\whp{\t}\xrightarrow{K_{\bullet}^{*}}\whp{\t}
\]
should be considered as but the pointing of an avatar of the diagram
\[
\cdots\rightarrow\StrCat\xrightarrow{S^{*}}\StrCat\xrightarrow{S^{*}}\StrCat
\]
where $S^{*}$ is the functor right adjoint to the functor $S:\StrCat\longrightarrow\StrCat$
which send an $\omega$-category $X$ to the $\omega$-category with
two objects $0$ and $1$ with $\Hom\left(0,1\right)=X$ and all other
$\Hom$-categories trivial.\footnote{For those already familiar with Berger's wreath notation, we might
simply say the $\omega$-category $\left[1\right];\left(X\right)$.}

\subsection*{$\protect\Z$-categories}

What then, we ask, is an object of the following limit?
\[
\llim\left\{ \cdots\rightarrow\StrCat\xrightarrow{S^{*}}\StrCat\xrightarrow{S^{*}}\StrCat\right\} 
\]
To answer this we consider that $\StrCat$ is an Eilenberg-Moore category
of algebras for a monad on the category of globular sets. Indeed we
recall that there exists a monad $T$ on globular sets and an isomorphism
$\StrCat\liso\wh{\bG}^{T}$. Then, just as with the functor 
\[
K^{*}:\wh{\t}\longrightarrow\wh{\t}
\]
the functor 
\[
S^{*}:\wh{\bG}\longrightarrow\wh{\bG}
\]
which gave rise to 
\[
S^{*}:\StrCat\longrightarrow\StrCat
\]
 is of the form $\left[S^{\op},\S\right]$ where $S$:$\bG\longrightarrow\bG$
is the obvious endomorphism of the globe category which sends the
$n$-globe $\overline{n}$ to the $\left(n+1\right)$-globe $\overline{n+1}$.

The taking of the Eilenberg-Moore category for a monad $T$ is the
taking of a lax limit over the underlying endofunctor as a diagram
in $\CAT$. As such, since this operation commutes with limits, we
are granted the existence of an isomorphism
\[
\llim\left\{ \cdots\rightarrow\StrCat\xrightarrow{S^{*}}\StrCat\xrightarrow{S^{*}}\StrCat\right\} \liso\reallywidehat{\colim\left\{ \cdots\leftarrow\bG\xleftarrow{S}\text{\ensuremath{\bG}}\xleftarrow{S}\bG\right\} }^{\llim T}
\]
for a monad $\llim T$. More, just as the universal property of the
colimit 
\[
\colim\left\{ \cdots\leftarrow\t\xleftarrow{K}\t\xleftarrow{K}\t\right\} 
\]
 is enjoyed by $\t_{\st}$ whose objects are in bijection with $\Z$,
with maps being generated by $\omega$-many face and degeneracy maps,
the universal property of 
\[
\colim\left\{ \cdots\leftarrow\bG\xleftarrow{S}\text{\ensuremath{\bG}}\xleftarrow{S}\bG\right\} 
\]
 is enjoyed by the category $\bG_{\Z}$ of integer, as opposed to
natural number, indexed globes.
\[
\bG_{\Z}=\left\langle \left.\vcenter{\vbox{\xyR{3pc}\xyC{3pc}\xymatrix{\cdots\ar@/^{.75pc}/[r]^{s}\ar@/_{.75pc}/[r]_{t} & \overline{-1}\ar@/^{.75pc}/[r]^{s}\ar@/_{.75pc}/[r]_{t} & \overline{0}\ar@/^{.75pc}/[r]^{s}\ar@/_{.75pc}/[r]_{t} & \overline{1}\ar@/^{.75pc}/[r]^{s}\ar@/_{.75pc}/[r]_{t} & \cdots}
}}\right|\begin{array}{c}
s\circ t=s\circ s\\
t\circ t=t\circ s
\end{array}\right\rangle 
\]
We refer to presheaves on $\bG_{\Z}$ as $\Z$-globular sets. Then,
to borrow a phrase from \cite{Weiner} (p. 52) : 
\begin{quotation}
``If this \emph{thing $\wh{\bG_{\Z}}^{\llim T}$} \sout{(sin)}
is to have a name, let that name be\emph{ $\Z$-categories} \sout{(simony
or sorcery)}.
\end{quotation}
We'll denote the category of such by $\Z\mhyphen\Cat$.

\section*{Part II: Limits in the $2$-category of categories with arities}

To complete the strict version of the story, we must identify a category
$\sfA$ and a regulus\footnote{In \cite{Kelly} we find the term regulus used to identify the set
of morphisms which carve out orthogonal subcategories - we will use
the term here, honoring and appreciating these ``small rulers''.} $\sf R$ in $\wh{\sf A}$ which are to $\Z$-categories as the simplex
category $\t$ and the spine inclusions are to small $1$-categories.
To clarify precisely what this means and to find such a category and
regulus we appeal to Berger, Mellies and Weber's $2$-category of
categories with arities and Bourke and Garner's synthesis of this
with Day's notion of density presentation.

In \cite{BergerMelliesWeber} those authors develop the $2$-category
of categories with arities $\CwA$ - a sage choice of a $2$-category
of small dense subcategories. That work then uses this $2$-category
to organize numerous characterizations of various algebras on presheaf
categories as instances of a more general theory. In particular, the
same apparatus is seen to characterize $1,2,\dots,n,\dots,\omega$-categories
as reflective subcategories of $\wh{\t},\wh{\Theta_{2}},\dots,\wh{\Theta_{n}},\dots,\wh{\Theta}$
respectively satisfying an ``abstract nerve criterion'', where the
$\Theta_{k}$ are subcategories of Joyal's category $\Theta$.\footnote{A much larger library of such examples is given in \cite{BourkeGarner}.}
Bourke and Garner go on to note that in many cases, including those
just mentioned, by way of Day's notion of a density presentation,
we may in fact easily extract a regulus which carves out precisely
the subcategory of those presheaves satisfying the criterion of the
``abstract nerve theorem''. In the cases mentioned the canonical
regulus is the set of spine inclusions which, together with a little
slight of hand, implies that the inner horns carve out the same subcategories
(this recovers the earlier result of \cite{Berger1}).

What we do in this paper is to show that the obvious $2$-functor
$\CwA\longrightarrow\CAT^{\bfTwo}$ which sends a full-and-faithful
right adjoint to the underlying functor reflects weighted limits.
From this it follows (see Example \ref{exa:A-cellular-description-of-Z-categories})
that 
\[
\llim\left\{ \cdots\rightarrow\StrCat\xrightarrow{S^{*}}\StrCat\xrightarrow{S^{*}}\StrCat\right\} 
\]
is a reflective subcategory of $\reallywidehat{\colim\left\{ \cdots\leftarrow\Theta\xleftarrow{S}\Theta\xleftarrow{S}\Theta\right\} }$
and more that it is carved out by the regulus whose homotopical and
groupoidal analogue carved out Kan's semisimplicial spectra in $\whp{\t_{\st}}$.
\begin{rem}
Since, in this work, we do not actually need to perform any explicit
computations with the categories 
\[
\Theta,\ \underrightarrow{\sf{oplaxlim}}\left\{ \cdots\leftarrow\Theta\xleftarrow{S}\Theta\xleftarrow{S}\Theta\right\} \ \mathrm{or}\ \colim\left\{ \cdots\leftarrow\Theta\xleftarrow{S}\Theta\xleftarrow{S}\Theta\right\} 
\]
 we leave explicit descriptions of those categories to the second
paper.
\end{rem}

\subsection*{Appendix A - Generalized spectrification}

All four $2$-categorical limits over $1$-diagrams are but instances
of the still more general enriched categorical notion of weighted
limits. If limits of diagrams of categories are compatible collections
of objects, then category-weighted limits are compatible collections
of diagrams valued in the constituent categories of a diagram of categories.
It is in this language that we identify the usual left adjoint to
the inclusion of $\Omega$-spectrum objects into sequential spectrum
objects as an instance of a rather more general reflection of weighted
limits over a $1$-diagram into conical limits over that same diagram.

\subsection*{Appendix B - Oplax limits, oplax weights}

This second appendix explains why the weight $\left(\sf J\downarrow\_\right)^{\op}:\sf J\longrightarrow\Cat$
computes oplax limits as weighted limits. This is well known, but
not in general to topologists\footnote{Consider, for example, the author.},
as such we include it for the author's edification and for the interested
reader's.

\subsection*{Appendices C\&D - Technical lemmata}

In a third appendix we assemble some technical lemmata which would
clutter the exposition were they in the body of the text.

\section*{Acknowledgements}

The author is particularly indebted to Steve Lack whose suggestions,
questions and pressure have in large part led to this work and this
author's conversion to Australian-style category theory. This author
was supported by Australian Research Council Discovery Project grant:
DP190102432 during the preparation of this work.

\newpage{}

\tableofcontents{}

\part{\label{part:A-2-Categorical-Treatment-of-Kans}A $2$-Categorical
Treatment of Kan's ``Semisimplicial Spectra''}

In this part we use some $2$-categorical notions to give a description
of the ``semisimplicial spectra'' of \cite{Kan}. Along the way,
in Remark \ref{rem:Chen-Kriz-Pultr-are-wrong}, we correct a subtle
error of \cite{Kan} and another related one found in \cite{ChenKrizPultr}.

\section{Kan's Suspension: The Kan decalage and its associated suspension
and loop-space functors}
\begin{defn}
Denote by $K$ the functor
\[
\xyR{0pc}\xyC{5pc}\xymatrix{K:\t\ar[r] & \t\\
\left[n\right]\ar@{|->}[r] & \left[n+1\right]\\
d^{i}\ar@{|->}[r] & d^{i}\\
s^{j}\ar@{|->}[r] & s^{j}
}
\]
and observe that there are natural transformations 
\[
\id_{\t}\overset{\alpha}{\Longrightarrow}K\overset{\beta}{\Longleftarrow}\left[0\right]
\]
 whose components at $\left[n\right]\in\text{\ensuremath{\t\ }}$
correspond to the maps $d^{n+1}:\left[n\right]\longrightarrow\left[n+1\right]$
and $\left\{ n+1\right\} :\left[0\right]\longrightarrow\left[n+1\right]$
. These data comprise the \textbf{Kan decalage} (see \cite{CisinskiMaltsiniotis})
and we illustrate them in Figure \ref{fig:The-Kan-decalage}. \textbf{Kan's
small suspension functor} and its associated loop-space functor (see
\cite{Kan}) are given by taking the quotient of $K$ by $\alpha$
and $\beta$.

\begin{figure}[H]
\begin{tikzcd}
		&
		\begin{tikzpicture}[line join = round, line cap = round]
			\coordinate (2) at (0,{sqrt(2)},0);
			\coordinate (0) at ({-.5*sqrt(3)},0,-.5);
			\coordinate (1) at (0,0,1);
			\begin{scope}[decoration={markings,mark=at position 0.5 with {\arrow{to}}}]
			\draw[fill=gray,fill opacity=.5] (0)--(1)--(2)--cycle;
			\draw[postaction={decorate}] (0)--(1);
			\draw[postaction={decorate}] (1)--(2);
			\draw[postaction={decorate}] (0)--(2);
			\end{scope}
		\end{tikzpicture}
		\ar[mapsto]{dl}[swap]{\textrm{id}}
		\ar[mapsto]{d}[description]{\textrm{K}}
		\ar[mapsto]{dr}[]{[0]}
			&
				\\
	\begin{tikzpicture}[line join = round, line cap = round]
		\coordinate (2) at (0,{sqrt(2)},0);
		\coordinate (0) at ({-.5*sqrt(3)},0,-.5);
		\coordinate (1) at (0,0,1);
		\begin{scope}[decoration={markings,mark=at position 0.5 with {\arrow{to}}}]
		\draw[fill=gray,fill opacity=.5] (0)--(1)--(2)--cycle;
		\draw[postaction={decorate}] (0)--(1);
		\draw[postaction={decorate}] (1)--(2);
		\draw[postaction={decorate}] (0)--(2);
		\end{scope}
	\end{tikzpicture}
	\ar{r}[swap]{\alpha_{[2]}}
		&
		\begin{tikzpicture}[line join = round, line cap = round]
			\coordinate (2) at (0,{sqrt(2)},0);
			\coordinate (0) at ({-.5*sqrt(3)},0,-.5);
			\coordinate (1) at (0,0,1);
			\coordinate (3) at ({.5*sqrt(3)},0,-.5);
			\begin{scope}[decoration={markings,mark=at position 0.5 with {\arrow{to}}}]
				\draw[densely dotted,postaction={decorate}] (0)--(3);
				\draw[fill=lightgray,fill opacity=.5] (1)--(2)--(3)--cycle;
				\draw[fill=gray,fill opacity=.5] (0)--(1)--(2)--cycle;
				\draw[postaction={decorate}] (0)--(1);
				\draw[postaction={decorate}] (1)--(2);
				\draw[postaction={decorate}] (2)--(3);
				\draw[postaction={decorate}] (1)--(3);
				\draw[postaction={decorate}] (0)--(2);
			\end{scope}
		\end{tikzpicture}
			&
			\bullet
			\ar{l}{\beta_{[0]}}
				\\
\end{tikzcd}\caption{\label{fig:The-Kan-decalage}The Kan decalage illustrated: the case
of $\left[2\right]$}
\end{figure}
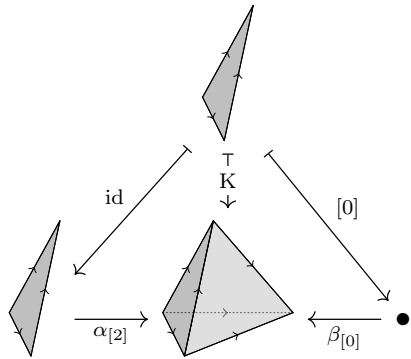
\end{defn}

Let the functor $\Sigma_{K}:\whp{\t}\longrightarrow\whp{\t}$ be the
left Kan extension
\[
\xyR{0pc}\xyC{5pc}\Sigma_{K}:=\mathsf{Lan}_{\Yo_{\bullet}}\left(\vcenter{\vbox{\xymatrix{\t_{\bullet}\ar[r] & \whp{\t}\\
\left[n\right]\ar@{|->}[r] & \nf{\t\left[n+1\right]_{+}}{\t\left[n\right]_{+}\vee\t\left[0\right]_{+}}
}
}}\right)
\]
with $\Yo_{\bullet}$ being the pointed Yoneda embedding $\t_{\bullet}\xrightarrow{\Yo_{\bullet}}\whp{\t}$
and let $\Omega_{K}$ denote its right adjoint.

Now, as proven in \cite{Kan}, we have homeomorphisms
\[
\left|\Sigma_{K}X\right|\liso\Sigma\left|X\right|
\]
 for all pointed simplicial sets $X$ where $\left|\_\right|:\whp{\t}\longrightarrow\T_{\bullet}$
is the usual geometric realization functor and $\Sigma$ is the usual
reduced suspension. More, the unit $\eta:\id_{\whp{\t}}\Longrightarrow\Omega_{K}\Sigma_{k}$
of the adjunction $\Sigma_{K}\dashv\Omega_{K}$ is invertible
\begin{prop}
(Propositon 3 \cite{ChenKrizPultr}) The unit $\eta:\id_{\whp{\t}}\Longrightarrow\Omega_{K}\Sigma_{k}$
of the adjunction $\Sigma_{K}\dashv\Omega_{K}$ is invertible.
\end{prop}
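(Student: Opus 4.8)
The plan is to deduce the statement from the fact that $\Sigma_{K}$ is fully faithful; recall that the unit of an adjunction $L\dashv R$ is invertible exactly when $L$ is so. Since $\Sigma_{K}$ is cocontinuous (being a left adjoint), sends the representable $\t[n]_{+}$ to $\t[n+1]_{+}/(\t[n]_{+}\vee\t[0]_{+})$, and the pointed representables are dense in $\whp{\t}$, the $n$-th component of $\eta_{X}$, after the target is identified via $\Sigma_{K}\dashv\Omega_{K}$ with $\whp{\t}\big(\Sigma_{K}\t[n]_{+},\Sigma_{K}X\big)$, is just the action of $\Sigma_{K}$ on hom-sets
\[
\whp{\t}\big(\t[n]_{+},X\big)\;\longrightarrow\;\whp{\t}\big(\Sigma_{K}\t[n]_{+},\Sigma_{K}X\big),\qquad \bar{x}\longmapsto\Sigma_{K}\bar{x},
\]
so it suffices to prove this is a bijection for every $X$ and $n$. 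The same adjunction gives the concrete formula $(\Omega_{K}Z)_{n}\cong\whp{\t}\big(\t[n+1]_{+}/(\t[n]_{+}\vee\t[0]_{+}),\,Z\big)$, which on unwinding the quotient is the set of $(n+1)$-simplices $z\in Z_{n+1}$ whose last face $d_{n+1}z$ and whose last vertex $z|_{\{n+1\}}$ are both the basepoint, with the evident simplicial operators.

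First I would extract an explicit description of the simplices of $\Sigma_{K}X$ from the colimit presentation
\[
\Sigma_{K}X\;\cong\;\lcolim{(\t[m]_{+}\to X)}\ \t[m+1]_{+}/(\t[m]_{+}\vee\t[0]_{+})
\]
over the category of pointed elements of $X$. Working out the identifications, and using in particular that a pointed map $\t[i]_{+}\to\t[m]_{+}$ is either $\phi_{+}$ for some $\phi\colon[i]\to[m]$ in $\t$ or else the constant map at the basepoint (the latter forcing every basepoint section of $X$ to contribute nothing), one arrives at the normal form: each $j$-simplex of $\Sigma_{K}X$ is either the basepoint or is represented, uniquely, by a non-basepoint $i$-simplex $y\in X_{i}$ for a single index $i$ with $0\le i\le j-1$; concisely,
\[
(\Sigma_{K}X)_{j}\;\cong\;\{\ast\}\ \sqcup\ \coprod_{i=0}^{j-1}\big(X_{i}\setminus\{\ast\}\big),
\]
naturally in $X$, with face and degeneracy operators that I would record explicitly. (Sanity check: $\Sigma_{K}\t[0]_{+}$ is the minimal simplicial circle $\t[1]/\partial\t[1]$.)

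It remains to combine the two descriptions. Tracing the normal form through the face $d^{n+1}\colon[n]\hookrightarrow[n+1]$ and the vertex $\langle n+1\rangle\colon[0]\hookrightarrow[n+1]$, one sees that among the $(n+1)$-simplices of $\Sigma_{K}X$ those whose last face is the basepoint are exactly the ones arising from the summand $X_{n}\setminus\{\ast\}$: for such a simplex both $d^{n+1}$ and the last vertex factor through the collapsed subobjects, hence land at the basepoint, whereas for $i\le n-1$ the face $d_{n+1}$ returns the non-basepoint simplex $y$ (and the last-vertex condition is in any case automatic here). Thus $(\Omega_{K}\Sigma_{K}X)_{n}\cong\{\ast\}\sqcup(X_{n}\setminus\{\ast\})=X_{n}$, and chasing $\bar{x}\colon\t[n]_{+}\to X$ through $\Sigma_{K}$ and evaluating at the universal $(n+1)$-simplex of $\t[n+1]_{+}/(\t[n]_{+}\vee\t[0]_{+})$ shows this bijection to be precisely $\eta_{X,n}$. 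A final check that it respects all face and degeneracy maps (automatic, since $\eta$ is a morphism of pointed simplicial sets to begin with) yields the invertibility of $\eta$.

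The main obstacle is the bookkeeping of the second paragraph: showing that the Kan-decalage identifications collapse exactly the spurious simplices, so that the normal form above holds and is functorial in the simplicial operators, is where all the content lies, and it is easy to miscount --- for instance by forgetting that the constant-at-basepoint maps present in the pointed Yoneda site render the basepoint sections of $X$ inert. It is worth stressing that this is a genuinely combinatorial fact about Kan's decalage: it does \emph{not} follow formally from the homeomorphism $|\Sigma_{K}X|\cong\Sigma|X|$, since reduced suspension of pointed spaces is far from fully faithful. A shorter route, if one prefers, is to cite Proposition~3 of \cite{ChenKrizPultr} directly, or to run the same computation through the explicit simplicial models of $\Sigma_{K}$ and $\Omega_{K}$ given in \cite{Kan}.
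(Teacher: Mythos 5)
Your proof is correct, but it takes a genuinely different route from the paper's: the paper does not prove the proposition at all, it simply asserts that Proposition~3 of \cite{ChenKrizPultr} is correct modulo the detail it isolates in Remark~\ref{rem:Chen-Kriz-Pultr-are-wrong}, whereas you give a self-contained combinatorial verification. Your normal form $(\Sigma_{K}X)_{j}\cong\{\ast\}\sqcup\coprod_{i=0}^{j-1}\bigl(X_{i}\setminus\{\ast\}\bigr)$ is right: on the representable $\t\left[m\right]_{+}$ a non-collapsed $j$-simplex $f:\left[j\right]\to\left[m+1\right]$ is determined by the cut $f^{-1}(m+1)=\{i+1,\dots,j\}$ together with $f|_{\left[i\right]}\in\t\left[m\right]_{i}$, and since both sides of your formula are cocontinuous in $X$ the representable case suffices; the identification $(\Omega_{K}\Sigma_{K}X)_{n}\cong X_{n}$ and the check that it is $\eta_{X,n}$ then go through exactly as you describe. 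Two points of comparison are worth recording. First, your insistence that $(\Omega_{K}Z)_{n}$ imposes \emph{both} the last-face condition $d_{n+1}z=\ast$ and the last-vertex condition is not optional bookkeeping: the functor $\Omega$ of \cite{ChenKrizPultr} imposes only the former and is for precisely that reason not right adjoint to $\Sigma_{K}$ --- this is the error the paper corrects in Remark~\ref{rem:Chen-Kriz-Pultr-are-wrong} --- so your computation sidesteps the very pitfall the paper flags, which the bare citation does not. Second, what each approach buys: the citation is shorter and defers all combinatorics; your argument is longer but makes the invertibility of the unit visibly a consequence of the cut-point normal form for simplices of $\Sigma_{K}X$, and you are also right that it cannot be deduced from the homeomorphism $\left|\Sigma_{K}X\right|\cong\Sigma\left|X\right|$, a distinction the paper itself emphasizes in Remark~\ref{rem:unit-of-Kans-adjunction-is-invertible}.
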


\begin{proof}
The proof given in \cite{ChenKrizPultr} is correct modulo the detail
we'll explore carefully in Remark \ref{rem:Chen-Kriz-Pultr-are-wrong}.
\end{proof}
\begin{rem}
\label{rem:unit-of-Kans-adjunction-is-invertible} The invertibility
of the unit is an important distinction between this adjunction and
the associated one in pointed spaces. For any set $E$ we have that
both the pointed simplicial set
\[
\Sigma_{K}\left(\bigvee_{e\in E}\t\left[0\right]_{+}\right)
\]
and the pointed space 
\[
\Sigma\left(\sf{Disc}\left(E\right)_{+}\right)
\]
are wedges of $E$-many circles. In $\T_{\bullet}$ however we may
compose these circles whereas in $\whp{\t}$ we cannot compose them
without first taking a fibrant replacement for $\Sigma_{K}\left(\bigvee_{e\in E}\t\left[0\right]_{+}\right)$
in the pointed Kan (a.k.a. test) model structure.\footnote{or Joyal's model structure for that matter, if one is so inclined.}
\end{rem}

\begin{rem}
\label{rem:Chen-Kriz-Pultr-are-wrong} In \cite{ChenKrizPultr} we
find a functor $\Omega:\whp{\t}\longrightarrow\whp{\t}$ defined as
follows.
\[
\xyR{0pc}\vcenter{\vbox{\xymatrix{\Omega:\whp{\t}\ar[r] & \whp{\t}\\
X\ar@{|->}[r] & \Omega\left(X\right):\t^{\op}\ar[r] & \S_{\bullet}\\
 & \left[n\right]\ar@{|->}[r] & \left\{ \sd{\left[n+1\right]\xrightarrow{x}X}d_{n+1}\left(x\right)=\bullet\right\} 
}
}}
\]
Those authors then go on to describe the functor left adjoint to $\Omega$
as follows.

Let $\t_{0}$ in $\t$ be wide subcategory thereof generated by the
objects and solid arrows of the following diagram (the dotted arrows
are present only to guide the eye, they are \emph{not} generators
of $\t_{0}$)
\[
\vcenter{\vbox{\xymatrix{\left[0\right]\ar@<1ex>[r]\ar@<-1ex>[r] & \ar@{..>}[l]\left[1\right]\ar@<2ex>[r]\ar[r]\ar@{..>}@<-2ex>[r] & \ar@{..>}@<1ex>[l]\ar@<-1ex>[l]\left[2\right]\ar@<3ex>[r]\ar@<1ex>[r]\ar@<-1ex>[r]\ar@{..>}@<-3ex>[r] & \ar@{..>}@<2ex>[l]\ar[l]\ar@<-2ex>[l]\cdots}
}}
\]
Let $\iota:\t_{0}\longrightarrow\t$ be the name of the inclusion
and see that $\iota$ induces an adjoint triple $\iota_{\bullet!}\dashv\iota_{\bullet}^{*}\dashv\iota_{\bullet}^{!}$
\[
\xyR{0pc}\xyC{3pc}\xymatrix{\whp{\t_{0}}\ar@/^{1pc}/[rr]^{\iota_{\bullet!}}\ar@/_{1pc}/[rr]_{\iota_{\bullet}^{!}} &  & \whp{\t}\ar[ll]_{\bot}^{\bot}|-{\iota_{\bullet}^{*}}}
\]
Let $\Sigma_{0}:\whp{\t}\longrightarrow\whp{\t_{0}}$ be the functor
defined as follows.
\[
\xyR{0pc}\vcenter{\vbox{\xymatrix{\Sigma_{0}:\whp{\t}\ar[r] & \whp{\t_{0}}\\
X\ar@{|->}[r] & \Sigma_{0}\left(X\right):\t_{0}^{\op}\ar[r] & \S_{\bullet}\\
 & \left[n+1\right]\ar@{|->}[r] & X\left(\left[n\right]\right)\\
 & \left[0\right]\ar@{|->}[r] & \left\{ \bullet\right\} 
}
}}
\]
It's not hard to see that we have an natural isomorphism of functors
$\Sigma_{K}\xRightarrow{\sim}\iota_{\bullet!}\Sigma_{0}$, indeed
the co-continuity of both functors provides that it suffices to check
the claim on simplices and it can be checked by hand that $\Sigma_{K}\left(\t\left[n\right]_{+}\right)$
agrees with $\iota_{\bullet!}\Sigma_{0}\left(\t\left[n\right]_{+}\right)$.

The functor $\Omega$ of \cite{ChenKrizPultr} however is \emph{not}
right adjoint to the functor $\iota_{\bullet!}\Sigma_{0}$ as is claimed
in \cite{ChenKrizPultr}. We will construct a counter-example. Consider
the pointed simplicial set $X$ defined by the following push-out.
\[
\vcenter{\vbox{\xyR{3pc}\xyC{3pc}\xymatrix{\t\left[0\right]_{+}\ar[r]^{d_{+}^{1}}\ar[d] & \t\left[1\right]_{+}\ar[d]\\
\bullet\ar[r] & X\pushoutcorner
}
}}
\]
By construction the unique non-degenerate $1$-cell of $X$, call
it $x\in X\left(\left[1\right]\right)$, has as its $1^{{\rm st}}$
face the base-point $\bullet$. Thus we compute
\begin{eqnarray*}
\whp{\t}\left(\t\left[0\right]_{+},\Omega\left(X\right)\right) & = & \Omega\left(X\right)\left(\left[0\right]\right)\\
 & = & \left\{ \sd{\t\left[1\right]_{+}\xrightarrow{y}X}d^{1}\left(y\right)=\bullet\right\} \\
 & = & \left\{ x,\bullet\right\} 
\end{eqnarray*}
However, we find that
\[
\whp{\t}\left(\iota_{\bullet!}\Sigma_{0}\left(\left[0\right]\right),X\right)=\left\{ \bullet\right\} 
\]
as the only $1$-cell of $X$ whose $1^{{\rm st}}$ and $0^{{\rm th}}$
faces are both $\bullet$ is the degenerate one at the base-point. 
\end{rem}

\section{Sequential spectra as oplax limits, $\Omega$-spectra as pseudo-limits}

Set $\sf X:\Z_{\leq0}\longrightarrow\CAT$ to be the diagram which
sends each $-n$ to $\whp{\t}$ and each $-\left(n+1\right)\longrightarrow-n$
to $\Omega_{K}$.
\[
\cdots\xrightarrow{\Omega_{K}}\whp{\t}\xrightarrow{\Omega_{K}}\whp{\t}
\]
As we stated in the introduction, $\llim\sf X$ is $\Omega\mhyphen\Sp\left(\whp{\t},\Sigma_{K}\dashv\Omega_{K}\right)$
and $\oplaxcolim\sf X$ is $\Sp\left(\whp{\t},\Sigma_{K}\dashv\Omega_{K}\right)$,
the usual categories of $\Omega$-spectrum and spectrum objects respectively
(this is treated in detail in Example \ref{exa:Sequential-spectra-}).
Then, for familiar reasons (or alternatively our weighted limit treatment
of spectrification, Theorem \ref{thm: generalized-spectrification-strict-limit})
we get a spectrification adjunction 
\[
\xyC{5pc}\xymatrix{\llim\sf X\ar@/^{.625pc}/[r]^{\sf{sp}_{\sf X}} & \oplaxlim\sf X\ar@/^{.625pc}/[l]_{\bot}^{\t_{\sf X}}}
\]
We recall that the category $\whp{\t}$ admits the pointed test model
structure Quillen equivalent to pointed spaces with the classical
model structure.%
{} Therefore, seeing as $\oplaxlim\sf X$ is equivalent to the category
of sequential spectrum objects valued in $\whp{\t}$ with respect
to the adjunction $\Sigma_{K}\dashv\Omega_{K}$, we may equip $\oplaxlim\sf X$
with \cite{Hovey}'s stable model structure.

Then, since we have homeomorphisms $\left|\Sigma_{K}X\right|\liso\Sigma\left|X\right|$
for all pointed simplicial sets $X$, it follows that Hovey's stable
model structure on $\oplaxlim\sf X$ is Quillen equivalent to the
Bousfield-Friedlander model structure on sequential spectra, whence
it presents the $\left(\infty,1\right)$-category of spectra. Kan's
observation, as we will elaborate below in the next section, is that
by having chosen a particular presentation of the suspension, i.e.
$\Sigma_{K}$, the category $\oplaxlim\sf X$ admits an elegant description
as a subcategory of a category of pointed presheaves.

\section{Factorization through $\left[\left(\_\right)_{\bullet}^{\protect\op},\protect\S_{\bullet}\right]_{\bullet}:\protect\Cat^{\protect\sf{coop}}\protect\longrightarrow\protect\CAT$
and the collage construction}

Let 
\[
\wh{\sf K}:\Z_{\leq0}\longrightarrow\CAT
\]
be the diagram which sends each $-n$ to $\whp{\t}$ and each $-\left(n+1\right)\longrightarrow-n$
to the functor 
\[
\xyR{0pc}\xyC{5pc}\xymatrix{K_{\bullet}^{*}:\whp{\t}\ar[r] & \whp{\t}\\
X\ar@{|->}[r] & \whp{\t}\left(\t K\left(\_\right)_{+},X\right)
}
\]
We observe that this functor factors through $\Cat^{\sf{coop}}$ as
follows
\[
\xyR{2.5pc}\xyC{2.5pc}\xymatrix{\Z_{\leq0}\ar[rrrr]^{\wh{\sf K}}\ar[dr]^{\sim} &  &  &  & \whp{\t}\\
 & \Z_{\geq0}^{\sf{coop}}\ar[dr]^{\sf K^{\sf{coop}}}\\
 &  & \Cat^{\sf{coop}}\ar[uurr]_{\left[\_,\S_{\bullet}\right]}
}
\]
where $\sf{K:\Z_{\geq0}\longrightarrow\Cat}$ is the functor which
sends each $n$ to $\t$ and each $n\longrightarrow\left(n+1\right)$
to $K$. Since the functor $\left[\_,\S_{\bullet}\right]:\Cat^{\sf{coop}}\longrightarrow\CAT$
sends conical, pseudo, lax and oplax colimit to conical, pseudo, lax
and oplax limits respectively it follows that we have further isomorphisms
\begin{eqnarray*}
\llim\wh{\sf K} & \liso & \reallywidehat{\colim\sf K}_{\bullet}\\
\underleftarrow{\sf{pslim}}\wh{\sf K} & \liso & \reallywidehat{\underrightarrow{\sf{pslim}}\sf K}_{\bullet}\\
\oplaxlim\wh{\sf K} & \liso & \reallywidehat{\underrightarrow{\sf{oplaxlim}}\sf K}_{\bullet}
\end{eqnarray*}
What's more, since $K_{\bullet}^{*}$ is an iso-fibration, it follows\footnote{as it did in for $\sf X$ in the previous example.}
that $\wh{\sf K}:\Z_{\leq0}\longrightarrow\CAT$ is fibrant in the
injective-canonical model structure, so we have isomorphisms
\[
\llim\wh{\sf K}\liso\underleftarrow{\sf{pslim}}\wh{\sf K}\liso\reallywidehat{\underrightarrow{\sf{pslim}}\sf K}_{\bullet}\liso\reallywidehat{\colim\sf K}_{\bullet}
\]

The categories $\underrightarrow{\sf{oplaxlim}}\sf{\sf K}$ and $\colim\sf{\sf K}$
admit explicit descriptions. Recall that the collage construction
presents the oplax colimits of an arrow in $\Prof$. Iterating this
construction gives us a presentation of the category $\underrightarrow{\mathsf{oplax}}\sf K$.
Indeed, set $\t_{\mathsf{coll}}$ to be the category generated by
\[
\coprod_{-m\in\Z_{\leq0}}\t
\]
together with 
\[
\t_{\mathsf{coll}}\left(\left(\left[n\right],-m\right),\left(\left[\ell\right],-\left(m+k\right)\right)\right)=\t\left(\left[n+k\right],\left[\ell\right]\right)
\]
and see that $\t_{\mathsf{coll}}$ enjoys the universal property of
$\underrightarrow{\mathsf{oplaxlim}}\sf K$.
\begin{rem}
The category $\t_{\sf{coll}}$ defined above can also be described
as being generated by the diagram 
\[
\xyR{2.5pc}\xyC{2.5pc}\xymatrix{\vdots & \ar@<2ex>[d]\ar[d]\ar@<-2ex>[d]\vdots & \vdots\ar@<2ex>[d]\ar[d]\ar@<-2ex>[d]\\
\vdots & \ar[ul]\ar@<1ex>[d]\ar@<-1ex>[d]\left[2\right]\ar@<3ex>[u]\ar@<1ex>[u]\ar@<-1ex>[u]\ar@<-3ex>[u] & \ar[ul]\left[2\right]\ar@<1ex>[d]\ar@<-1ex>[d]\ar@<3ex>[u]\ar@<1ex>[u]\ar@<-1ex>[u]\ar@<-3ex>[u]\\
\vdots & \ar[ul]\ar[d]\left[1\right]\ar@<2ex>[u]\ar[u]\ar@<-2ex>[u] & \ar[ul]\left[1\right]\ar[d]\ar@<2ex>[u]\ar[u]\ar@<-2ex>[u]\\
\vdots & \ar[ul]\left[0\right]\ar@<1ex>[u]\ar@<-1ex>[u] & \ar[ul]\left[0\right]\ar@<1ex>[u]\ar@<-1ex>[u]\\
\cdots & -1 & 0
}
\]
whose vertical maps are the usual face and degeneracy maps, subject
to the usual identities, and whose diagonal maps act so that the squares
\[
\xyR{2.5pc}\xyC{2.5pc}\xymatrix{\left[n+2\right] &  &  & \left[n+2\right]\ar[d]^{s^{j}}\\
\left[n+1\right]\ar[u]^{d^{i}} & \ar[ul]\left[n+1\right] & \mathrm{and} & \left[n+1\right] & \ar[ul]\left[n+1\right]\ar[d]_{s^{j}}\\
 & \ar[ul]\left[n\right]\ar[u]_{d^{i}} &  &  & \ar[ul]\left[n\right]
}
\]
commute. 
\end{rem}

The universal property of the strict colimit $\colim\sf K$ is similarly
enjoyed by $\t_{\st}$, the category whose objects are in bijection
with the set $\Z$, 
\[
\Ob\left(\t_{\st}\right)=\left\{ \sd{\left[z\right]}z\in\Z\right\} 
\]
with $\Hom$-sets generated by maps 
\[
\left\{ \sd{d^{i}:\left[m-1\right]\longrightarrow\left[m\right]}i\in\N\right\} 
\]
and 
\[
\left\{ \sd{s^{j}:\left[m+1\right]\longrightarrow\left[m\right]}j\in\text{\ensuremath{\N\ }}\right\} 
\]
subject to (un-bounded) simplicial identities. One gets to the category
$\colim\sf K$ from the category $\underrightarrow{\sf{oplaxlim}}\sf K$
by identifying objects along the diagonal maps of $\t_{\sf{coll}}$.
Indeed, this defines a functor $\rho:\t_{\sf{coll}}\longrightarrow\t_{\sf{st}}$,
whence an adjoint triple 
\[
\xyR{0pc}\xyC{3pc}\xymatrix{\reallywidehat{\t_{\sf{coll}}}_{\bullet}\ar@/^{1pc}/[rr]^{\rho_{\bullet}^{!}}\ar@/_{1pc}/[rr]_{\rho_{\bullet}^{!}} &  & \reallywidehat{\t_{\sf{st}}}_{\bullet}\ar[ll]_{\bot}^{\bot}|-{\rho_{\bullet}^{*}}}
\]
of categories of pointed presheaves.

Now, since $\Sigma_{K}$ is a quotient of $K_{\bullet!}=\Lan_{\Yo_{\bullet}}\left(\Yo_{\bullet}K\right)$,
we have oplax squares
\[
\xyR{2.5pc}\xyC{2.5pc}\xymatrix{\whp{\t}\ar[d]_{\id}\ar@{=>}[dr] & \ar[l]_{K_{\bullet!}}\whp{\t}\ar[d]^{\id}\\
\whp{\t} & \ar[l]^{\Sigma_{K}}\whp{\t}
}
\]
whence, taking mates, we have oplax squares
\[
\xyR{2.5pc}\xyC{2.5pc}\xymatrix{\whp{\t}\ar[d]_{\id}\ar[r]^{\Omega_{K}} & \whp{\t}\ar[d]^{\id}\\
\whp{\t}\ar@{=>}[ur]\ar[r]_{K_{\bullet}^{*}} & \whp{\t}
}
\]
These squares can be pasted together providing an oplax natural transformation
$\sf X\xRightarrow{\sf{oplax}}\wh{\sf K}$, whence we have a functor
$\oplaxlim\sf X\longrightarrow\oplaxlim\wh{\sf K}$. But, since $\oplaxlim\sf{\wh K}$
is a category of pointed presheaves, this map admits a more explicit
description. Indeed that functor is a restricted Yoneda embedding,
or nerve.

Observe that, for each -$n\in\Z_{\leq0}$, there is a pseudo-cone
\[
\xyR{1.5pc}\xyC{6pc}\xymatrix{ & \whp{\t}\\
 & \vdots\ar[u]\\
 & \whp{\t}\ar[u]_{\Omega_{K}}\\
\t_{\bullet}\ar[r]|-{\Yo_{\bullet}}\ar[ur]|-{\Omega_{K}}\ar[uuur]|-{\Omega_{K}^{n}}\ar[dr]|-{\Sigma_{K}}\ar[ddr]|-{\Sigma_{K}^{2}} & \whp{\t}\ar[u]_{\Omega_{K}}\\
 & \whp{\t}\ar[u]_{\Omega_{K}}\\
 & \whp{\t}\ar[u]_{\Omega_{K}}\\
 & \vdots\ar[u]
}
\]
where the instance of $\Yo_{\bullet}$ targets the $-n^{th}$ copy
of $\whp{\t}$.

These pseudo-cones define maps 
\[
\Phi^{\co{-n}{\infty}}:\t_{\bullet}\longrightarrow\underleftarrow{\sf{pslim}}\sf X\liso\llim\sf X
\]

\begin{rem}
In the perhaps more familiar notation of sequential spectrum objects
the maps are described by the formula
\[
\left\{ \Phi^{\co{-n}{\infty}}\left(\t\left[r\right]\right)_{-i}=\begin{cases}
\bullet & i<n\\
\Sigma_{K}^{\left(i-n\right)}\t\left[r\right]_{+} & i\geq n
\end{cases}\right\} 
\]
\end{rem}

We observe then that the diagrams

\begin{figure}[H]
\[
	\begin{tikzcd}[column sep=small, row sep=small]
		&&&&&&&& \vdots
		\\
		\\
		\\
		&&&&&&&& {\widehat{\triangle}_{\bullet}}
		\\
		\\
		\\
		&&&&&&&& {\widehat{\triangle}_{\bullet}}
		\\
		\\
		\triangle_{\bullet} 
		\\
		&&&&&&&& {\widehat{\triangle}_{\bullet}}
		\\
		&& \triangle_{\bullet}
		\\
		\\
		&&&&&&&& {\widehat{\triangle}_{\bullet}}
		\\
		\\
		\\
		&&&&&&&& \vdots
		\arrow[from=4-9, to=1-9]
		\arrow["{\Omega_{K}}"{description}, from=7-9, to=4-9]
		\arrow["{\Omega_{K}}"{description}, from=10-9, to=7-9]
		\arrow["{\Omega_{K}}"{description}, from=13-9, to=10-9]
		\arrow[from=16-9, to=13-9]
		\arrow["K"{description}, from=9-1, to=11-3]
		\arrow["{\Sigma_{K}}"{description}, from=11-3, to=13-9]
		\arrow[""{name=3, anchor=center, inner sep=0}, "{\Sigma^{2}_{K}}"{description}, from=9-1, to=13-9, pos=.65]
		\arrow[shorten >=5pt, Rightarrow, from=11-3, to=3]
		\arrow["{\Yo_{\bullet}}"{description}, pos=.8, from=11-3, to=10-9, crossing over] 		\arrow[""{name=1, anchor=center, inner sep=0}, "{\Sigma_{K}}"{description}, pos=.8, from=9-1, to=10-9, crossing over]
		\arrow[shorten <=0pt, shorten >=6pt, Rightarrow, from=11-3, to=1, crossing over]
		\arrow["{\Omega_{K}}"{description}, from=11-3, to=7-9, crossing over]
		\arrow[""{name=0, anchor=center, inner sep=0}, "{\Yo_{\bullet}}"{description}, from=9-1, to=7-9, crossing over, pos =.55]
		\arrow[shorten <=8pt, shorten >=4pt, Rightarrow, from=11-3, to=0, crossing over]
		\arrow[""{name=2, anchor=center, inner sep=0}, "{\Omega_{K}}"{description}, from=9-1, to=4-9, crossing over, pos=.3]
		\arrow["{\Omega^{2}_{K}}"{description}, from=11-3, to=4-9, crossing over] 		\arrow[shorten <=2pt, shorten >=6pt, Rightarrow, from=11-3, to=2, crossing over]
	\end{tikzcd}
\]
\end{figure}

where the $2$-cells are de-suspensions and suspensions of the quotient
$K_{\bullet!}\Longrightarrow\Sigma_{K}$, induce an oplax co-cone

\begin{figure}[H]
\[
\begin{tikzcd}
	\triangle_{\bullet} \\
	& \triangle_{\bullet} &&&& \llim \mathsf{X} \\
	&& \triangle_{\bullet} \\
	&&& \ddots
	\arrow["K"{description}, from=1-1, to=2-2]
	\arrow["K"{description}, from=2-2, to=3-3]
	\arrow["K"{description}, from=3-3, to=4-4]
	\arrow[""{name=n, anchor=center, inner sep=0},"{\vdots}"{description}, from=4-4, to=2-6]
	\arrow[""{name=2, anchor=center, inner sep=0},"{\Phi^{[-2,\infty)}}"{description}, from=3-3, to=2-6]
	\arrow[""{name=1, anchor=center, inner sep=0}, "{\Phi^{[-1,\infty)}}"{description}, from=2-2, to=2-6]
	\arrow[""{name=0, anchor=center, inner sep=0}, "{\Phi^{[0,\infty)}}"{description}, from=1-1, to=2-6]
	\arrow[shorten >=15pt, shorten <=10pt, Rightarrow, from=2-2, to=0]
	\arrow[shorten >=13pt, shorten <=5pt, Rightarrow, from=3-3, to=1]
	\arrow[shorten >=13pt, shorten <=5pt, Rightarrow, from=4-4, to=2]
\end{tikzcd}
\]
\end{figure}

whence a map 
\[
\Phi:\underrightarrow{\mathsf{oplaxlim}}\sf K\longrightarrow\underleftarrow{\sf{pslim}}\sf X\liso\llim\sf X
\]
We may of course compose $\Phi$ with the reflective subcategory 
\[
\sf{sp_{\sf X}}\dashv\t_{\sf X}:\llim\sf X\leftrightarrows\oplaxlim\sf X
\]
That composition $\t_{\sf X}\Phi:\oplaxcolim\sf K\longrightarrow\oplaxcolim\sf X$
induces an adjoint triple.
\[
\xyR{0pc}\xyC{3pc}\xymatrix{\reallywidehat{\t_{\sf{coll}}}_{\bullet}\ar@/^{1pc}/[rr]^{\left(\t_{\sf X}\Phi\right)_{\bullet!}}\ar@/_{1pc}/[rr]_{\left(\t_{\sf X}\Phi\right)_{\bullet}^{!}} &  & \oplaxlim\sf X\ar[ll]_{\bot}^{\bot}|-{\left(\t_{\sf X}\Phi\right)_{\bullet}^{*}}}
\]
As a note before moving on: we observe that $\left(\t_{\sf X}\Phi\right)_{\bullet}^{*}$
recovers the oplax limit $\oplaxlim\sf X\longrightarrow\oplaxlim\wh{\sf K}$.
Composing the adjunction $\left(\t_{\sf X}\Phi\right)_{\bullet}^{*}\dashv\left(\t_{\sf X}\Phi\right)_{\bullet}^{!}$
above with the adjunction $\rho_{\bullet!}\dashv\rho_{\bullet}^{*}$
provides a third adjunction 

\[
\xyR{0pc}\xyC{3pc}\xymatrix{\reallywidehat{\t_{\st}}_{\bullet}\ar@/_{1pc}/[rr]_{\left(\t_{\sf X}\Phi\right)_{\bullet}^{!}\circ\rho_{\bullet}^{*}} &  & \oplaxlim\sf X\ar[ll]_{\rho_{\bullet!}\circ\left(\t_{\sf X}\Phi\right)_{\bullet}^{*}}^{\bot}}
\]

\section{Double duty and slick tricks}

We will be concerned with two interesting and important subcategories
of $\whp{\t_{\st}}$. Both are full subcategories on colimits of objects
of specific forms.
\begin{defn}
For each $n\in\N$ and $z\in\Z$, define the pointed presheaf $\t_{\st}\nf{\left[z\right]}{d^{>n}=*}$
by the following pushout in $\whp{\t_{\st}}$.
\[
\xyR{3pc}\xyC{10pc}\xymatrix{\left(\bigvee_{1\leq i}\t_{\st}\left[z-1\right]_{+}\right)\ar[r]\sp(0.65){\left(\vee_{1\leq i}d^{n+i}\right)}\ar[d] & \t_{\st}\left[z\right]_{+}\ar[d]\\
\bullet\ar[r] & \pushoutcorner\t_{\st}\nf{\left[z\right]}{d^{>n}=*}
}
\]
Similarly, for each $n\in\N$ and $z\in\Z$, let 
\[
S^{\left(z-n\right)}\left[n\right]=\nf{\t_{\st}\left[z\right]}{d^{n}d^{n-1}\cdots d^{1}d^{0}=d^{>n}=*}
\]
 be defined by the following pushout in $\whp{\t_{\st}}$.
\[
\xyR{3pc}\xyC{10pc}\xymatrix{\left(\t_{\st}\left[z-\left(n+1\right)\right]_{+}\right)\vee\left(\bigvee_{1\leq i}\t_{\st}\left[z-1\right]_{+}\right)\ar[r]\sp(0.65){d^{n}d^{n-1}\cdots d^{1}d^{0}\bigvee\left(\vee_{1\leq i}d^{n+i}\right)}\ar[d] & \t_{\st}\left[z\right]_{+}\ar[d]\\
\bullet\ar[r] & \pushoutcorner S^{\left(z-n\right)}\left[n\right]
}
\]
We define $\sf{LocFin}\left(\t_{\st}\right)$ to be the full subcategory
of $\whp{\t_{\st}}$ on objects which are colimits of objects of the
form $\t_{\st}\nf{\left[z\right]}{d^{>n}=*}$ and we define $\sf{LocSph}\left(\t_{\st}\right)$
to be the full subcategory of $\whp{\t_{\st}}$ on colimits of objects
of the form $S^{\left(z-n\right)}\left[n\right]$.
\end{defn}

\begin{rem}
As we discussed in the introduction, the pointed presheaf $\t_{\st}\nf{\left[z\right]}{d^{>n}=*}$
is a stable $\left[z\right]$-simplex with precisely $n$-many codimension
1 faces and the pointed presheaf $S^{\left(z-n\right)}\left[n\right]$
is a stable $\left[z\right]$-simplex which is, in essence, an $\left[n\right]$-simplex
worth of $\left(z-n\right)$-spheres.
\end{rem}

It is immediate, as observed in \cite{ChenKrizPultr}, that $\sf{LocFin}\left(\t_{\st}\right)$
is a coreflective subcategory of $\whp{\t_{\st}}$ with the right
adjoint functor picking out, for each presheaf, the sub-object of
cells of the form $\t_{\st}\nf{\left[z\right]}{d^{>n}=*}$ . The same
argument provides that $\sf{LocSph}\left(\t_{\st}\right)$ is coreflective
in $\sf{LocFin}\left(\t_{\st}\right)$.

\[
\xyR{0pc}\xyC{1.5pc}\xymatrix{\reallywidehat{\t_{\st}}_{\bullet}\ar[rr]^{\bot} &  & \ar@/_{1pc}/@{^{(}->}[ll]\mathsf{LocFin}\left(\t_{\st}\right)\ar[rr]^{\bot} &  & \ar@/_{1pc}/)@{^{(}->}[ll]\mathsf{LocSph}\left(\t_{\st}\right)}
\]

Now, as we will show, every presheaf $Z:\t_{\st}^{\op}\longrightarrow\S_{\bullet}$
which is $\rho_{\bullet!}\circ\left(\t_{\sf X}\Phi\right)_{\bullet}^{*}$
of something satisfies an important and interesting property - it
is a colimit of pointed presheaves $S^{\left(z-n\right)}\left[n\right]$.

Every object of the category $\oplaxlim\sf X$ is canonically the
colimit of objects of the form
\[
\IN_{\sf X}\Phi\left(\t_{\sf{coll}}\left(\left[n\right],-k\right)\right)
\]
as any simplex in any index gives rise to such. It then follows that,
since $\rho_{\bullet!}\circ\left(\IN_{\sf X}\Phi\right)_{\bullet}^{*}$
preserves colimits, every $Z:\t_{\st}^{\op}\longrightarrow\S_{\bullet}$
which is $\rho_{\bullet!}\circ\left(\t_{\sf X}\Phi\right)_{\bullet}^{*}$
of something is a colimit of objects of the form 
\[
\rho_{\bullet!}\circ\left(\IN_{\sf X}\Phi\right)_{\bullet}^{*}\circ\IN_{\sf X}\Phi\left(\t_{\sf{coll}}\left(\left[n\right],-k\right)\right)
\]
As it turns out however, these are precisely the stable simplicial
sets $S^{\left(z-n\right)_{\left[n\right]}}$.
\begin{lem}
For all $n\in\text{\ensuremath{\N}}$ and $k\in\N$, 
\[
\rho_{\bullet!}\circ\left(\t_{\sf X}\Phi\right)_{\bullet}^{*}\circ\left(\t_{\sf X}\Phi\right)\left(\t_{\sf{coll}}\left(\left[n\right],-k\right)\right)=S^{-k}\left[n\right]
\]
\end{lem}

\begin{proof}
Consider the object $\left(\t_{\sf X}\Phi\right)_{\bullet}^{*}\circ\left(\t_{\sf X}\Phi\right)\left(\t_{\sf{coll}}\left(\left[n\right],-k\right)\right)$
in the sequential notation.
\[
\left(\t_{\sf X}\Phi\right)_{\bullet}^{*}\circ\left(\t_{\sf X}\Phi\right)\left(\t_{\sf{coll}}\left(\left[n\right],-k\right)\right)_{-\ell}=\begin{cases}
\bullet & \ell<k\\
\Sigma_{K}^{\left(\ell-k\right)}\t\left[n\right]_{+} & \ell\geq k
\end{cases}
\]
\end{proof}
We've thus the promised (in the introduction) composition of adjunctions
\[
\xyR{0pc}\xyC{1.5pc}\xymatrix{\reallywidehat{\t_{\st}}_{\bullet}\ar[rr]^{\bot} &  & \ar@/_{1pc}/@{^{(}->}[ll]\mathsf{LocFin}\left(\t_{\st}\right)\ar[rr]^{\bot} &  & \ar@/_{1pc}/@{^{(}->}[ll]\mathsf{LocSph}\left(\t_{\st}\right)\ar@/^{1pc}/[rr]^{\sf{KPs}} &  & \oplaxlim\sf X\ar[ll]_{\sf{KSp}}^{\bot}}
\]

\begin{example}
Now, Kan claims that the composite adjunction 
\[
\xyR{0pc}\xyC{1.5pc}\xymatrix{\mathsf{LocFin}\left(\t_{\st}\right)\ar[rr]^{\bot} &  & \ar@/_{1pc}/@{^{(}->}[ll]\mathsf{LocSph}\left(\t_{\st}\right)\ar@/^{1pc}/[rr]^{\sf{KPs}} &  & \oplaxlim\sf X\ar[ll]_{\sf{KSp}}^{\bot}}
\]
is a reflective subcategory, by asserting the co-unit to be a natural
isomorphism, whence asserting the right adjoint functor to be full-and-faithful.
It's not hard to show however that $\t_{\st}\nf{\left[z\right]}{d^{>n}=*}$
is in $\sf{LocFin}\left(\t_{\st}\right)$ but \emph{not} in $\sf{LocSph}\left(\t_{\st}\right)$,
so the component of co-unit of the adjunction at the object $\t_{\st}\nf{\left[z\right]}{d^{>n}=*}$
is \emph{not }the identity.
\end{example}

The right hand adjunction however is, by Kan's observation, a reflective
subcategory.
\begin{lem}
The co-unit of the adjunction 
\[
\xyR{0pc}\xyC{1.5pc}\xymatrix{\mathsf{LocSph}\left(\t_{\st}\right)\ar@/_{1pc}/[rr]_{\sf{KPs}} &  & \oplaxlim\sf X\ar[ll]_{\sf{KSp}}^{\bot}}
\]
 is a natural isomorphism, whence $\sf{LocSph}\left(\t_{\st}\right)\longrightarrow\oplaxcolim\sf X$
is full-and-faithful.
\end{lem}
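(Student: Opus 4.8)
The plan is to show that the co-unit $\varepsilon\colon\sf{KSp}\circ\sf{KPs}\Longrightarrow\id_{\mathsf{LocSph}\left(\t_{\st}\right)}$ of the adjunction $\sf{KSp}\dashv\sf{KPs}$ is invertible; the asserted full-and-faithfulness of $\sf{KPs}$ is then the standard consequence that a right adjoint is full and faithful exactly when its co-unit is an isomorphism. Since $\mathsf{LocSph}\left(\t_{\st}\right)$ is by construction a \emph{full} subcategory of the pointed presheaf category $\whp{\t_{\st}}$, it suffices to check that each component $\varepsilon_{A}\colon\sf{KSp}\,\sf{KPs}\left(A\right)\longrightarrow A$, with $A\in\mathsf{LocSph}\left(\t_{\st}\right)$, is an isomorphism of pointed presheaves on $\t_{\st}$, i.e.\ a bijection on every object $\left[z\right]$.

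The heart of the argument is an explicit description of the domain $\sf{KSp}\,\sf{KPs}\left(A\right)$. Recall from the discussion preceding the previous Lemma that every object of $\oplaxlim\sf X$ is \emph{canonically} the colimit of the objects $\t_{\sf X}\Phi\left(\t_{\sf{coll}}\left(\left[n\right],-k\right)\right)$ mapping into it; that is, $\t_{\sf X}\Phi\colon\t_{\sf{coll}}\longrightarrow\oplaxlim\sf X$ is dense. Applying this to $\sf{KPs}\left(A\right)$ and transposing across $\sf{KSp}\dashv\sf{KPs}$, the indexing category of that colimit is identified with the category of maps $\sf{KSp}\left(\t_{\sf X}\Phi\left(\t_{\sf{coll}}\left(\left[n\right],-k\right)\right)\right)\longrightarrow A$; but by the previous Lemma $\sf{KSp}\left(\t_{\sf X}\Phi\left(\t_{\sf{coll}}\left(\left[n\right],-k\right)\right)\right)=S^{-k}\left[n\right]$. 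Since $\sf{KSp}$ is (the corestriction of) the composite of left adjoints $\rho_{\bullet!}\circ\left(\t_{\sf X}\Phi\right)_{\bullet}^{*}$, it preserves colimits, so we obtain a natural identification
\[
\sf{KSp}\,\sf{KPs}\left(A\right)\;\liso\;\colim_{\left(S^{-k}\left[n\right]\rightarrow A\right)}S^{-k}\left[n\right],
\]
under which $\varepsilon_{A}$ is precisely the canonical comparison map out of this ``cellular'' colimit of negatively shifted stable spheres indexed by their maps into $A$.

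It therefore remains to prove that this comparison map is an isomorphism for every $A\in\mathsf{LocSph}\left(\t_{\st}\right)$ --- equivalently, that the negatively shifted stable spheres $\left\{ S^{-k}\left[n\right]\mid n,k\in\N\right\}$ are \emph{dense} in $\mathsf{LocSph}\left(\t_{\st}\right)$. Here the definition of $\mathsf{LocSph}\left(\t_{\st}\right)$ enters: every object is a colimit of presheaves $S^{\left(z-n\right)}\left[n\right]$, and for $z>n$ one checks that such a presheaf is itself a colimit of presheaves $S^{-k}\left[m\right]$ with $k\geq0$, so the $S^{-k}\left[n\right]$ already generate $\mathsf{LocSph}\left(\t_{\st}\right)$ under colimits; one then verifies by a direct cell count that the over-category of such spheres computes $A$ on the nose --- concretely, that a cell of $A$, together with all of its iterated faces and degeneracies, is determined by the spherical cells through which it factors. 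I expect this density statement to be the principal obstacle, for it is exactly the point at which the story for $\mathsf{LocSph}\left(\t_{\st}\right)$ succeeds while the analogous reasoning for $\mathsf{LocFin}\left(\t_{\st}\right)$ fails: the local-finite cell $\t_{\st}\nicefrac{\left[z\right]}{d^{>n}=*}$ is not assembled out of spheres, the over-category of such cells does not reconstruct it, and consequently the corresponding co-unit is not invertible --- this is precisely the failure exhibited in the preceding Example and the subtlety behind Kan's original claim. Once the density statement is in hand, invertibility of $\varepsilon$ follows, and hence so does the full-and-faithfulness of $\sf{KPs}$.
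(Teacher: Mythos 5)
Your argument is correct and is essentially the paper's own proof viewed from the left-adjoint side: the paper computes $\sf{KPs}\left(Z\right)_{-i}\left(\left[n\right]\right)=\whp{\t_{\st}}\left(S^{-i}\left[n\right],Z\right)$ directly and declares the co-unit an isomorphism, which is exactly your reduction to the density of the non-positively shifted spheres in $\sf{LocSph}\left(\t_{\st}\right)$. The point you flag as the principal obstacle --- that the positively shifted generators $S^{\left(z-n\right)}\left[n\right]$ must themselves be assembled from the $S^{-k}\left[m\right]$ --- is precisely what the paper addresses (no more rigorously than you do) in its remark that there are quotient relationships among the spheres, e.g.\ $S^{1}\left[1\right]\longrightarrow S^{2}\left[0\right]$.
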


\begin{proof}
This can be shown by direct computation. Suppose $Z:\t_{\st}^{\op}\longrightarrow\S_{\bullet}$
to be of $\sf{LocSph}\left(\t_{\st}\right)$. Then the pointed simplicial
sets $\sf{KPs}\left(Z\right)_{-i}$ may be described as follows
\[
\vcenter{\vbox{\xyR{0pc}\xyC{1.5pc}\xymatrix{\sf{KPs}\left(Z\right)_{-i}:\t^{\op}\ar[r] & \S_{\bullet}\\
\left[n\right]\ar@{|->}[r] & \whp{\t_{\st}}\left(S^{-i}\left[n\right],Z\right)
}
}}
\]
so it is clear enough that the co-unit is a natural isomorphism.
\end{proof}
\begin{rem}
For the above to really be as clear as we assert, it is important
to note that there are quotient relationships between the spheres
$S^{-k}\left[n\right]$. For example $S^{1}\left[1\right]\longrightarrow S^{2}\left[0\right]$
is a quotient.
\end{rem}

As it turns out however, in the next section, we'll show that this
subtle error of Kan's, repeated in a slightly different guise in \cite{ChenKrizPultr},
ends up irrelevant once we go from categories to model categories.

\section{Homotopy-coherent models of...?}

In 1973 \cite{Brown} synthesized Kan's treatment of spectra with
the then new notion of model categories (really with the notion of
categories of fibrant objects also developed in \cite{Brown}).
\begin{defn}
( \cite{Brown} ) Let $z\in\Z$, $n\in\N$ and $0\leq i\leq n$ be
given and set 
\[
\underset{\sf B}{\Lambda}^{i}\nf{\left[z\right]}{d^{>n}=*}\hookrightarrow\t_{\st}\nf{\left[z\right]}{d^{>n}=*}
\]
to be the union, in $\t_{\st}\nf{\left[z\right]}{d^{>m}=*}$, of the
images of the set of simplices 
\[
\left\{ d^{k}:\t_{\st}\left[z-1\right]\longrightarrow\t_{\st}\nf{\left[z\right]}{d^{>n}=*}\right\} _{k\in\N-\left\{ i\right\} }
\]
Similarly, let 
\[
\partial\t_{\st}\nf{\left[z\right]}{d^{>n}=*}\hookrightarrow\t_{\st}\nf{\left[z\right]}{d^{>n}=*}
\]
 be the union of the images of the set of simplices
\[
\left\{ d^{k}:\t_{\st}\left[z-1\right]\longrightarrow\t_{\st}\nf{\left[z\right]}{d^{>n}=*}\right\} _{k\in\N}
\]
\end{defn}

\begin{thm}
\label{thm:Browns-model-strucutre}(\cite{Brown}) The sets 
\[
\Lambda_{\sf{Brown}}=\left\{ \sd{\underset{\sf B}{\Lambda}^{i}\nf{\left[z\right]}{d^{>n}=*}\hookrightarrow\t_{\st}\nf{\left[z\right]}{d^{>n}=*}}z\in\Z,n\in\N,0\leq i\leq n\right\} 
\]
and 
\[
\partial_{\sf{Brown}}=\left\{ \sd{\partial\t_{\st}\nf{\left[z\right]}{d^{>n}=*}\hookrightarrow\t_{\st}\nf{\left[z\right]}{d^{>n}=*}}z\in\Z,n\in\N\right\} 
\]
comprise a set of generating acyclic cofibrations and generating co-fibration
for a model structure on $\sf{LocFin}\left(\t_{\st}\right)$. More:
\begin{itemize}
\item This model structure is the left transfer of a pointed Cisinski model
structure on $\whp{\t_{\st}}$; and
\item The adjunction 
\[
\xyR{0pc}\xyC{1.5pc}\xymatrix{\mathsf{LocFin}\left(\t_{\st}\right)\ar@/_{1pc}/[rr]_{\sf{KPs}} &  & \oplaxlim\sf X\ar[ll]_{\sf{KSp}}^{\bot}}
\]
 where the right hand side is given the stable model structure is
a Quillen equivalence.
\end{itemize}
\end{thm}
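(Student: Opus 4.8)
The plan is to assemble the model structure in three formal moves and then to pinpoint the single genuinely homotopical input on which both the existence of the structure and the Quillen equivalence rest; that input is exactly Brown's analysis of fibrant objects in \cite{Brown}, later pinned down as a Quillen equivalence in \cite{BousfieldFriedlander}, and everything else is Cisinski-style bookkeeping plus one classical horn-filling computation. The three moves are: (1)~construct a ``master'' pointed Cisinski model structure on $\whp{\t_{\st}}$ with $\Lambda_{\sf{Brown}}$ generating the trivial cofibrations; (2)~left-transfer it along the coreflective inclusion $\sf{LocFin}(\t_{\st})\hookrightarrow\whp{\t_{\st}}$ and identify the transferred generators with $\partial_{\sf{Brown}}$ and $\Lambda_{\sf{Brown}}$; (3)~compare $\sf{LocFin}(\t_{\st})$ with $\oplaxlim\sf X$ through $\sf{LocSph}(\t_{\st})$.

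For (1) I would invoke Cisinski's existence theorem \cite{Cisinski}. Since $\whp{\t_{\st}}$ is a pointed presheaf topos it admits a cellular model; for the exact cylinder I would take the smash with a stable interval, e.g. $X\mapsto X\wedge(\t_{\st}\nf{[1]}{d^{>1}=*})_{+}$ (or a variant adapted to the Kan decalage), whose two endpoint inclusions are $d^{0}$ and $d^{1}$; and for the anodyne extensions I would take the class generated by $\Lambda_{\sf{Brown}}$. Cisinski's theorem then produces a cofibrantly generated, combinatorial model structure on $\whp{\t_{\st}}$ whose cofibrations are the monomorphisms and whose trivial cofibrations are the anodyne extensions. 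The one point that needs an honest argument is the classical one, exactly as for the Kan model structure on simplicial sets: one must check that the class of anodyne extensions for the chosen cylinder coincides with the weakly saturated closure of $\Lambda_{\sf{Brown}}$ alone --- equivalently, that the endpoint inclusions and their pushout-products with the boundary inclusions are already $\Lambda_{\sf{Brown}}$-anodyne (the analogue of ``$\{0\}\hookrightarrow\t[1]$ is anodyne'') --- and this is a direct, if tedious, filling computation with the stable horns.

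For (2), since $\sf{LocFin}(\t_{\st})$ is coreflective in $\whp{\t_{\st}}$ its inclusion is a left adjoint, so $\sf{LocFin}(\t_{\st})$ is closed under all colimits in $\whp{\t_{\st}}$ and is cocomplete and locally presentable; moreover the domains of the maps in $\partial_{\sf{Brown}}$ and $\Lambda_{\sf{Brown}}$ are finite colimits of representables, hence compact, so the small object argument is available inside $\sf{LocFin}(\t_{\st})$. The skeletal filtration of an ambient monomorphism with codomain in $\sf{LocFin}(\t_{\st})$ by its nondegenerate cells --- each attachment of a $\t_{\st}\nf{[z]}{d^{>n}=*}$-cell being a pushout of a map of $\partial_{\sf{Brown}}$ --- identifies the $\partial_{\sf{Brown}}$-cell complexes with the cofibrations, so $\partial_{\sf{Brown}}$ is a cellular model for $\sf{LocFin}(\t_{\st})$; the $\Lambda_{\sf{Brown}}$-cell complexes are carried by the inclusion to trivial cofibrations of the master structure, hence are weak equivalences, and the computation from (1) shows conversely that a map between objects of $\sf{LocFin}(\t_{\st})$ with the right lifting property against $\Lambda_{\sf{Brown}}$ is the restriction of a fibration of the master structure. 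Feeding this into the left transfer --- equivalently, into the standard recognition theorem for cofibrantly generated model structures (see \cite{Hovey}) --- reduces the existence of the transferred structure to the acyclicity condition: that the $\Lambda_{\sf{Brown}}$-local objects of $\sf{LocFin}(\t_{\st})$, i.e. those $Z$ whose associated sequential object $\sf{KPs}(Z)$ is an $\Omega$-spectrum, jointly detect the weak equivalences. This is precisely the content that \cite{Brown}'s fibrancy analysis supplies, and with it the model structure on $\sf{LocFin}(\t_{\st})$ exists with generating cofibrations $\partial_{\sf{Brown}}$ and generating trivial cofibrations $\Lambda_{\sf{Brown}}$, and is by construction the left transfer of a pointed Cisinski structure on $\whp{\t_{\st}}$ --- the first bullet.

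For (3) I would factor the adjunction as
\[
\sf{LocFin}(\t_{\st})\;\leftrightarrows\;\sf{LocSph}(\t_{\st})\;\leftrightarrows\;\oplaxlim\sf X,
\]
restricting the transferred structure along the coreflection $\sf{LocSph}(\t_{\st})\hookrightarrow\sf{LocFin}(\t_{\st})$. The first composite is a Quillen equivalence: the quotient $\t_{\st}\nf{[z]}{d^{>n}=*}\twoheadrightarrow S^{(z-n)}[n]$ collapses exactly the faces that the stable horns render coherently contractible, so on generators the unit of the coreflection is built from pushouts of maps in $\Lambda_{\sf{Brown}}$, hence a weak equivalence, and one concludes by $2$-out-of-$3$. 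The second composite is a Quillen adjunction one of whose adjoints is fully faithful (by the preceding lemma), hence a homotopical $(\text{co})$reflective localization, and it remains to identify this localization with localization at the stable equivalences; for this I would use the homeomorphisms $\left|\Sigma_{K}X\right|\liso\Sigma\left|X\right|$ recorded earlier to transport the comparison to topological spectra, so that a map of $\sf{LocFin}(\t_{\st})$ is a weak equivalence exactly when the associated sequential spectrum is a stable equivalence, and conclude using that $\oplaxlim\sf X$ with Hovey's stable structure was already shown to be Quillen equivalent to the Bousfield--Friedlander model structure. Composing the two Quillen equivalences yields the second bullet. I expect the acyclicity/fibrancy analysis invoked in move~(2), together with the matching of localizers in move~(3), to be the main obstacle --- these are one and the same genuinely stable-homotopical fact, the identification of the Cisinski localizer generated by $\Lambda_{\sf{Brown}}$ with the class of stable equivalences, and the remainder of the argument is formal.
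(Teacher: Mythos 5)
The paper does not actually prove this theorem: it is stated as a recollection of prior work and its proof reads, in its entirety, ``See Theorem 5 and Proposition 7 of \cite{Brown}.'' Your proposal therefore does considerably more than the paper asks of itself, and as a roadmap it is structurally reasonable: the decomposition into (1) a Cisinski model structure on $\whp{\t_{\st}}$ with $\Lambda_{\sf{Brown}}$ generating the anodyne maps, (2) a left transfer to the coreflective subcategory $\sf{LocFin}\left(\t_{\st}\right)$, and (3) a comparison with $\oplaxlim\sf X$, is the natural modern packaging, and you correctly isolate the irreducible homotopical input --- the horn-filling/fibrancy analysis and the identification of the localizer generated by $\Lambda_{\sf{Brown}}$ with the stable equivalences --- as precisely the content of Brown's Theorem 5 and Proposition 7. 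Since the theorem is explicitly attributed to \cite{Brown}, deferring those steps to that reference is legitimate, and your reconstruction is best read as an annotated guide to Brown's proof rather than an independent one.

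One step of your move (3) would not go through as written. You claim that the quotient $\t_{\st}\nf{\left[z\right]}{d^{>n}=*}\twoheadrightarrow S^{\left(z-n\right)}\left[n\right]$ is, on generators, ``built from pushouts of maps in $\Lambda_{\sf{Brown}}$.'' It is not: this map collapses the face $d^{n}d^{n-1}\cdots d^{1}d^{0}$ to the basepoint, so it is an epimorphism and in particular not a monomorphism, hence cannot be a relative $\Lambda_{\sf{Brown}}$-cell complex (all of whose maps are monomorphisms of presheaves); nor is it the unit or counit of the coreflection, which is an inclusion. The assertion that the collapsed data is coherently contractible requires a genuine argument; the paper's subsequent corollary establishes the Quillen equivalence between $\sf{LocFin}\left(\t_{\st}\right)$ and $\sf{LocSph}\left(\t_{\st}\right)$ not by exhibiting anything as anodyne but by a factorization argument on cell complexes together with the observation that weak equivalences are detected by the stable homotopy groups $\pi_{z}\left(\_\right)=\left[S^{z}\left[0\right],\_\right]_{\bullet}$, i.e. by maps out of objects of $\sf{LocSph}\left(\t_{\st}\right)$. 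A minor further point: $\t_{\st}\nf{\left[1\right]}{d^{>1}=*}$ is already a pointed presheaf, so adjoining a disjoint basepoint to it in your candidate cylinder is a type error; the exact cylinder for a pointed Cisinski structure should be the smash with a freely pointed \emph{unpointed} interval.
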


\begin{proof}
See Theorem 5 and Proposition 7 of \cite{Brown}.
\end{proof}
This model structure can further be left transported to $\sf{LocSph}\left(\t_{\st}\right)$.
\begin{defn}
Let $z\in\Z$, $n\in\N$, and $0\leq i\leq n$ be given and set 
\[
\underset{\sf S}{\Lambda}^{i}\nf{\left[z\right]}{d^{>n}=*}\hookrightarrow S^{\left(z-n\right)}\left[n\right]
\]
to be defined by way of the epi-mono factorization 
\[
\xyR{3pc}\xyC{10pc}\xymatrix{\underset{\sf B}{\Lambda^{i}}\nf{\left[z\right]}{d^{>n}=*}\ar[r]\ar[d]_{\sf{epi}} & \t_{\st}\nf{\left[z\right]}{d^{>n}=*}\ar[d]\\
\underset{\sf S}{\Lambda}^{i}\nf{\left[z\right]}{d^{>n}=*}\ar[r]^{\sf{mono}} & S^{\left(z-n\right)}\left[n\right]
}
\]
Let $\text{}\Lambda_{\sf{Spherical}}$ be the following set of monomorphisms.
\[
\Lambda_{\sf{Spherical}}=\left\{ \sd{\underset{\sf S}{\Lambda}^{i}\nf{\left[z\right]}{d^{>n}=*}\hookrightarrow S^{\left(z-n\right)}\left[n\right]}z\in\Z,n\in\N,0\leq i\leq n\right\} 
\]
Similarly we define 
\[
\partial S^{\left(z-n\right)\left[n\right]}\hookrightarrow S^{\left(z-n\right)}\left[n\right]
\]
 by the epi-mono factorization 
\[
\xyR{3pc}\xyC{10pc}\xymatrix{\partial\t_{\st}\nf{\left[z\right]}{d^{>n}=*}\ar[r]\ar[d]_{\sf{epi}} & \t_{\st}\nf{\left[z\right]}{d^{>n}=*}\ar[d]\\
\partial S^{\left(z-n\right)}\left[n\right]\ar[r]^{\sf{mono}} & S^{\left(z-n\right)}\left[n\right]
}
\]
and we define the set $\partial_{\sf{Spherical}}$ as follows.
\[
\partial_{\sf{\sf{Spherical}}}=\left\{ \sd{\partial S^{\left(z-n\right)}\left[n\right]\hookrightarrow S^{\left(z-n\right)}\left[n\right]}z\in\Z,n\in\N\right\} 
\]
\end{defn}

\begin{cor}
The sets $\Lambda_{\sf{Spherical}}$ and $\partial_{\sf{Spherical}}$
comprise a set of generating acyclic cofibrations and generating cofibrations
for the left transport of the model category on $\sf{LocFin}\left(\t_{\st}\right)$
of Theorem \ref{thm:Browns-model-strucutre} along the adjunction
\[
\xyR{0pc}\xyC{3pc}\xymatrix{\sf{LocFin}\left(\t_{\st}\right)\ar@/_{1pc}/[rr] &  & \mathscr{\mathbf{\sf{LocSph}\left(\t_{\st}\right)}}\ar[ll]_{\sf{}}^{\bot}}
\]
Moreover the the Quillen adjunction adjunction is a Quillen equivalence.
\end{cor}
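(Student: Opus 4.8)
The plan is to reduce the corollary to a single homotopical input — that each defining quotient $\t_{\st}\nf{\left[z\right]}{d^{>n}=*}\longrightarrow S^{\left(z-n\right)}\left[n\right]$ is a weak equivalence of Brown's model structure on $\sf{LocFin}\left(\t_{\st}\right)$, equivalently that the subcomplex it collapses is coherently contractible — after which both the existence of the transported structure with the advertised generators and the Quillen equivalence are essentially formal.

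For the transfer, I would first record that both $\sf{LocFin}\left(\t_{\st}\right)$ and $\sf{LocSph}\left(\t_{\st}\right)$ are coreflective subcategories of the presheaf category $\whp{\t_{\st}}$, hence locally presentable, so the smallness and permanence hypotheses of the recognition theorem for a transferred (left-induced) model structure hold automatically; and that, by Theorem \ref{thm:Browns-model-strucutre}, Brown's structure on $\sf{LocFin}\left(\t_{\st}\right)$ is cofibrantly generated and left proper. The operative point is that the inclusion $\sf{LocSph}\left(\t_{\st}\right)\hookrightarrow\sf{LocFin}\left(\t_{\st}\right)$ is a left adjoint, hence cocontinuous, so it carries cellular constructions to cellular constructions; together with the observation that $S^{\left(z-n\right)}\left[n\right]$, $\partial S^{\left(z-n\right)}\left[n\right]$ and $\underset{\sf S}{\Lambda}^{i}\nf{\left[z\right]}{d^{>n}=*}$ are, by their definitions, the mono parts of the epi--mono factorizations of the corresponding $\t_{\st}\nf{\left[z\right]}{d^{>n}=*}$-data through the quotients, uniqueness of epi--mono factorizations identifies the saturated classes generated by $\Lambda_{\sf{Spherical}}$ and $\partial_{\sf{Spherical}}$ with the images under those quotients of the classes generated by $\Lambda_{\sf{Brown}}$ and $\partial_{\sf{Brown}}$. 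Granting the homotopical input, the acyclicity condition of the recognition theorem — that relative $\Lambda_{\sf{Spherical}}$-cell complexes are weak equivalences, and that $\Lambda_{\sf{Spherical}}$-injectives which are weak equivalences are $\partial_{\sf{Spherical}}$-injective — is then transported from the corresponding statement for $\Lambda_{\sf{Brown}}$ by a two-out-of-three argument in the square whose horizontal edges are a $\Lambda_{\sf{Brown}}$- and the associated $\Lambda_{\sf{Spherical}}$-generator and whose vertical edges are the relevant quotients. This produces the model structure on $\sf{LocSph}\left(\t_{\st}\right)$ generated by $\partial_{\sf{Spherical}}$ and $\Lambda_{\sf{Spherical}}$ and makes the displayed adjunction a Quillen adjunction by construction.

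The homotopical input is the main obstacle and is where the slogan ``coherently contractible'' of the introduction is cashed out. Since $S^{\left(z-n\right)}\left[n\right]$ is, by definition, the pushout of $\bullet$ along the monomorphism including the image of the iterated face $d^{n}d^{n-1}\cdots d^{0}$, left properness of Brown's structure reduces the claim to contractibility of that image; I would verify this along the lines of the earlier analysis of the spheres $S^{-k}\left[n\right]$ and their quotient relations (of which $S^{1}\left[1\right]\twoheadrightarrow S^{2}\left[0\right]$ is the first instance), either on geometric realizations or by producing an explicit extra degeneracy on the relevant stable simplex, after which the conclusion feeds back into the square above.

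Finally the Quillen equivalence: the derived unit is invertible because $\sf{LocSph}\left(\t_{\st}\right)\hookrightarrow\sf{LocFin}\left(\t_{\st}\right)$ is fully faithful, and the derived counit — which unwinds to the comparison of $\t_{\st}\nf{\left[z\right]}{d^{>n}=*}$-shaped with $S^{\left(z-n\right)}\left[n\right]$-shaped cells inside a stably fibrant object — is a weak equivalence by the same ``coherently contractible'' input, propagated from generators to arbitrary objects by cellular induction and left properness. Alternatively one may finish by two-out-of-three for Quillen equivalences along $\sf{LocSph}\left(\t_{\st}\right)\to\sf{LocFin}\left(\t_{\st}\right)\to\oplaxlim\sf X$, using Theorem \ref{thm:Browns-model-strucutre} for the second leg and the preceding lemma, which exhibits the composite as a reflective localization of $\oplaxlim\sf X$ carrying the stable structure to the one just constructed, for the first. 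Either route makes the subtle error of \cite{Kan}, repeated in \cite{ChenKrizPultr}, homotopically invisible, as promised in the introduction.
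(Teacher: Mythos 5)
Your central ``homotopical input'' is false, and with it the proposed proof of the Quillen equivalence collapses. You reduce everything to the claim that each quotient $\t_{\st}\nf{\left[z\right]}{d^{>n}=*}\longrightarrow S^{\left(z-n\right)}\left[n\right]$ is a weak equivalence, ``equivalently that the subcomplex it collapses is coherently contractible.'' But that subcomplex is the image of $d^{n}d^{n-1}\cdots d^{0}$ in $\t_{\st}\nf{\left[z\right]}{d^{>n}=*}$, and the simplicial identities give $d^{n}\cdots d^{0}\circ d^{j}=d^{j+n+1}\circ d^{n}\cdots d^{0}$ with $j+n+1>n$, so every proper face of that cell already lies in the part collapsed by $d^{>n}=*$: the image is a copy of the minimal stable sphere $S^{z-n-1}\left[0\right]$, not anything contractible. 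Concretely, for $n=0$ the source $\t_{\st}\nf{\left[z\right]}{d^{>0}=*}$ has exactly two non-basepoint nondegenerate cells, the top cell and its single surviving face $d^{0}$, attached by a degree-one map, so it is a stable disk (stably contractible), while the target $S^{z}\left[0\right]$ is a stable $z$-sphere; the quotient is as far from a weak equivalence as possible. No extra degeneracy or realization argument will rescue this, the left-properness reduction has a false premise, and the two-out-of-three square whose vertical edges are ``the relevant quotients'' therefore proves nothing. The introduction's slogan that the discarded data are ``coherently contractible'' is not the statement that these collapsed subobjects are contractible.

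The paper's proof needs no such input. For the identification of the generating sets it argues purely combinatorially: cocontinuity of the inclusion together with $\Lambda_{\sf{Spherical}}\subset\sf{Cell}\left(\Lambda_{\sf{Brown}}\right)$ and $\partial_{\sf{Spherical}}\subset\sf{Cell}\left(\partial_{\sf{Brown}}\right)$ gives one containment of cell classes, and, since membership in $\sf{LocSph}\left(\t_{\st}\right)$ is detected simplex-wise, the epi--mono factorizations let one refactor any Brown-cell attachment between spherical objects through the corresponding spherical generator, giving the converse; your ``if'' direction matches this, and your appeal to uniqueness of epi--mono factorizations gestures at the converse, though ``images under the quotients of the saturated classes'' is not the right statement. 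For the Quillen equivalence the paper never compares the two kinds of cells by a weak equivalence at all: it observes that weak equivalences in both structures are detected by the stable homotopy groups $\pi_{z}\left(\_\right)=\left[S^{z}\left[0\right],\_\right]_{\bullet}$, whose representing objects already lie in $\sf{LocSph}\left(\t_{\st}\right)$. That detection argument is what makes the difference homotopically invisible. Your alternative route, two-out-of-three along $\sf{LocSph}\left(\t_{\st}\right)\rightarrow\sf{LocFin}\left(\t_{\st}\right)\rightarrow\oplaxlim\sf X$, is closer in spirit, but it presupposes that the composite adjunction is a Quillen equivalence, which does not follow from its counit being an isomorphism and itself requires the detection argument you are trying to avoid.
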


\begin{proof}
To prove that the model structure generated by $\Lambda_{\sf{Spherical}}$
and $\partial_{\sf{Spherical}}$ coincides with the left transferred
model structure is to prove, for $f:X\longrightarrow Y$ in $\sf{LocSph}\left(\t_{\st}\right)$,
that $f\in\sf{Cell}\left(\Lambda_{\sf{Spherical}}\right)$ if and
only if $f\in\sf{Cell}\left(\Lambda_{\sf{Brown}}\right)$ and likewise
for $\partial_{\sf{Spherical}}$ and $\partial_{\sf{Brown}}$.

Since the inclusion functor is left adjoint, and both $\Lambda_{\sf{Spherical}}$
and $\partial_{\sf{Spherical}}$ lie in $\sf{Cell}\left(\Lambda_{\sf{Brown}}\right)$
and $\sf{Cell}\left(\partial_{\sf{Brown}}\right)$ respectively, the
``if direction'' is clear. Conversely, since membership in $\sf{LocSph}\left(\t_{\st}\right)$
is defined simplex-wise it follows by a simple factorization argument
that any $f$ from $\text{\ensuremath{\sf{LocSph}\left(\t_{\st}\right)}}$
which lies in $\sf{Cell}\left(\Lambda_{\sf{Brown}}\right)$ and $\sf{Cell}\left(\partial_{\sf{Brown}}\right)$
actually lies in $\Cell\left(\Lambda_{\sf{Spherical}}\right)$ or
$\sf{Cell}\left(\partial_{\sf{Spherical}}\right)$ respectively. Indeed,
it's clear that we have factorizations as below.
\[
\xyR{3pc}\xyC{1pc}\xymatrix{ & \underset{\sf B}{\Lambda^{i}}\t_{\st}\nf{\left[z\right]}{d^{>n}=*},\partial\t_{\st}\nf{\left[z\right]}{d^{>n}=*}\ar[rr]\ar[dd]\ar[dl] &  & \ar[dl]\t_{\st}\nf{\left[z\right]}{d^{>n}=*}\ar[dd]\\
\underset{\sf S}{\Lambda}^{i}S^{\left(z-k\right)}\left[k\right],\partial S^{\left(z-k\right)}\left[k\right]\ar@{-->}[dr]\ar[rr] &  & \ar@{-->}[dr]S^{\left(z-k\right)}\left[k\right]\\
 & X\ar[rr] &  & Y
}
\]
That the Quillen adjunction is in fact a Quillen equivalence follows
from the determination of weak-equivalences by their inducement of
isomorphisms on stable homotopy groups, which are determined as homotopy
classed of maps from objects in $\sf{LocSph}\left(\t_{\st}\right)$
- consider that
\[
\pi_{z}\left(\_\right)=\left[S^{z}\left[0\right],\_\right]_{\bullet}
\]
\end{proof}
Having found model categories for spectra which admit interpretation
as homotopy coherent models of a limit theory, both the model structure
on $\sf{LocSph}\left(\t_{\st}\right)$ and $\sf{LocFin}\left(\t_{\st}\right)$
permit this, we are wont to ask - what is that limit theory a description
of?

\section{A Globular Perspective on Kan's Suspension Functor}

Kan's model structure is intuitively interpreted as describing homotopy
coherent groupoids. However, in this interpretation one usually thinks
of every simplex merely as composition datum - unless one is accustomed
to thinks in terms of higher categorical nerves and oriented simplices.
As such it may not be clear how to relate the notions of composition
embodied by lifting against 
\[
\underset{\sf S}{\Lambda}^{i}\nf{\left[z\right]}{d^{>n}=*}\longrightarrow\t_{\st}\nf{\left[z\right]}{d^{>n}=*}
\]
for different values of $z$ and $n$. We contend that making the
globular content more explicit makes things a bit more clear.

Since the simplex category $\t$ is a full subcategory of the category
$\Cat$ of small categories we may think of Kan's suspension as taking
each simplex, thought of as a $1$-category, and returning the $2$-category
with a single object and that original simplex as its unique $\Hom$-category.
In this view, Kan's suspension is but the simplicial avatar of the
endo-functor 
\[
\Sigma_{\omega}:\StrCat_{\bullet}\longrightarrow\StrCat_{\bullet}
\]
which takes an $\omega$-category and returns the $\omega$-category
with a single object and that original $\omega$-category as its unique
$\Hom$-$\omega$-category.
\begin{rem}
Leaving simplicial sets and going to globular ones is not of course
the only way to see this. Indeed we could, and probably should, think
of Kan's suspension $\Sigma_{K}$ as an endo-functor on strict $\omega$-categories
as complicial sets, which sends an $\left[n\right]$-simplex to a
thick $\left[n+1\right]$-simplex $\Sigma_{K}\left[n+1\right]_{+}$.
Had this author known of that material prior to developing this work,
he might well have gone in that direction instead.
\end{rem}

\begin{rem}
Importantly, this perspective is compatible with the interpretation
of Kan's model structure as providing a description of homotopy coherent
$\omega$-groupoids, as a composition datum for $n$-many $1$-cells
passes to composition datum for $n$-many $2$-cells.

Let $\Omega_{\omega}$ denote the functor right adjoint to $\Sigma_{\omega}$.
The diagram which begat sequential spectra should then be understood
an avatar of the diagram
\[
\cdots\rightarrow\StrCat_{\bullet}\xrightarrow{\Omega_{\omega}}\StrCat_{\bullet}\xrightarrow{\Omega_{\omega}}\StrCat_{\bullet}
\]
and the diagram 
\[
\cdots\rightarrow\whp{\t}\xrightarrow{K_{\bullet}^{*}}\whp{\t}\xrightarrow{K_{\bullet}^{*}}\whp{\t}
\]
should be considered as but the pointing of an avatar of the diagram
\[
\cdots\rightarrow\StrCat\xrightarrow{S^{*}}\StrCat\xrightarrow{S^{*}}\StrCat
\]
where $S^{*}$ is the functor right adjoint to the functor $S:\StrCat\longrightarrow\StrCat$
which sends an $\omega$-category $X$ to the $\omega$-category with
two objects $0$ and $1$ with $\Hom\left(0,1\right)=X$ and all other
$\Hom$-categories empty.\footnote{For those already familiar with Berger's wreath notation, we might
simply say the $\omega$-category $\left[1\right];\left(X\right)$.}
\end{rem}

\section{$\protect\Z$-categories}

\subsection{Strict $\omega$-categories and Berger's Cellular Nerve}
\begin{defn}
Let $\bG$ denote the\textbf{ globe category}, the category 
\[
\left\langle \left.\vcenter{\vbox{\xyR{3pc}\xyC{3pc}\xymatrix{\overline{0}\ar@/^{.75pc}/[r]^{s}\ar@/_{.75pc}/[r]_{t} & \overline{1}\ar@/^{.75pc}/[r]^{s}\ar@/_{.75pc}/[r]_{t} & \cdots}
}}\right|\begin{array}{c}
s\circ t=s\circ s\\
t\circ t=t\circ s
\end{array}\right\rangle 
\]
Let the category of \textbf{globular sets }be the presheaf category
$\wh{\bG}$ and let $T:\wh{\bG}\longrightarrow\wh{\bG}$ be the monad
whose algebras are strict-$\omega$-categories.
\end{defn}

It was first proved in \cite{Berger1} that strict $\omega$-categories
are a reflective, indeed orthogonal, subcategory of the category of
cellular sets $\wh{\Theta}$. We repeat the treatment of the category
$\Theta$ developed first in \cite{Berger2}, however our presentation
borrows more from \cite{CisinskiMaltsiniotis}.

\subsubsection{Segal's category $\Gamma$}

Segal's category $\Gamma$ is a skeleton - in the sense of a small
category of chosen representatives for each isomorphism class - for
the opposite category of the category of finite pointed sets.
\begin{defn}
Let $\Gamma$, \textbf{Segal's gamma category}, be the category specified
thus: let
\[
\Ob\left(\Gamma\right)=\left\{ \sd{\left\langle k\right\rangle =\left\{ 1,\dots,k\right\} }k\geq1\right\} \cup\left\{ \left\langle 0\right\rangle =\varnothing\right\} ,
\]
 and let $\Gamma\left(\left\langle n\right\rangle ,\left\langle m\right\rangle \right)$
be defined by the expression
\[
\Gamma\left(\left\langle n\right\rangle ,\left\langle m\right\rangle \right)=\left\{ \sd{\varphi:\left\langle n\right\rangle \longrightarrow\mathsf{Sub}_{\S}\left(\left\langle m\right\rangle \right)}\forall i\neq j\in\left\langle m\right\rangle ,\ \varphi\left(i\right)\cap\varphi\left(j\right)=\varnothing\right\} 
\]
where, for any category $A$ and object $a$ thereof, $\mathsf{Sub}_{A}\left(a\right)$
is the category of subobjects of $a$. Define the composition of morphisms
in $\Gamma$ by setting
\[
\left\langle \ell\right\rangle \overset{\varphi}{\longrightarrow}\left\langle m\right\rangle \overset{\sigma}{\longrightarrow}\left\langle n\right\rangle 
\]
to be the map
\[
\sigma\circ\varphi:i\longmapsto\bigcup_{j\in\varphi\left(i\right)}\sigma\left(j\right).
\]
\end{defn}

\begin{rem}
The equivalence of categories between $\Gamma$ and $\FinSet_{\bullet}^{\op}$
is a $0$-truncated analogue of the Grothendieck construction - a
map of finite pointed sets is replaced with the data of the fibres
it parameterizes.
\end{rem}

\subsubsection{\label{subsec:The-Categorical-wreath}Berger's Categorical wreath
product}
\begin{defn}
Let $A$ and $B$ be small categories. Given a functor $G:B\longrightarrow\Gamma$,
we define $B\int_{G}A=B\int A$ (with the second notation suppressing
the functor $G$ when the meaning is clear) be the category the objects
of which are 
\[
b;\left(a_{1},\dots,a_{m}\right)
\]
where $b$ is an object of $B$, $G\left(b\right)=\left\langle m\right\rangle $,
and $\left(a_{1},\dots,a_{m}\right)$ describes a function $G\left(b\right)\longrightarrow\Ob\left(A\right)$.
The morphisms of $B\int A$, denoted 
\[
g;\mathbf{f}:b;\left(a_{i}\right)_{i\in G\left(b\right)}\longrightarrow d;\left(c_{i}\right)_{i\in G\left(d\right)}
\]
 are constituted of a morphism
\[
g:b\longrightarrow d
\]
 of $B$ and a morphism of $\wh A$, 
\[
\mathbf{f}=\left(\left(f_{ji}:a_{i}\rightarrow c_{j}\right)_{j\in G\left(g\right)\left(i\right)}\right)_{i\in G\left(b\right)}:\prod_{i\in G\left(b\right)}\left(A^{a_{i}}\longrightarrow\prod_{j\in G\left(g\right)\left(i\right)}A^{c_{j}}\right)
\]
 The composition
\[
b;\left(a_{i}\right)_{i\in G\left(b\right)}\overset{g;\mathbf{f}}{\longrightarrow}d;\left(c_{i}\right)_{i\in G\left(d\right)}\overset{r;\mathbf{q}}{\longrightarrow}\ell;\left(k_{i}\right)_{i\in G\left(\ell\right)}
\]
is denoted $r\circ g;\mathbf{q}\circ\mathbf{f}$ where the meaning
of $r\circ g$ is clear and
\[
\mathbf{q}\circ\mathbf{f}=\left(\left(q_{jk}\circ f_{ki}\right)_{j\in G\left(r\circ g\right)\left(i\right)}\right)_{i\in G\left(b\right)}
\]
with the varying values for $k\in G\left(d\right)$ in the expression
being those unique $k$ in $G\left(g\right)\left(i\right)$ such that
$j\in G\left(r\right)\left(k\right)$.
\end{defn}

\begin{rem}
We'll also use the more explicit notation $g;\left(f_{1},f_{2},\dots,f_{n}\right)$
where doing so simplifies the exposition.
\end{rem}

\begin{example}
\label{exa:from-simplex-category-to-segal's-gamma}

Define the functor $F:\t\longrightarrow\Gamma$ by setting 
\[
F\left(\left[n\right]\right)=\left\langle n\right\rangle 
\]
 and setting for each $\varphi:\left[m\right]\longrightarrow\left[n\right]$,
\[
F\left(\varphi\right):\left\langle m\right\rangle \longrightarrow\left\langle n\right\rangle 
\]
to be the function 
\[
F\left(\varphi\right):\left\langle m\right\rangle \longrightarrow\mathsf{Sub}_{\S}\left(\left\langle n\right\rangle \right)
\]
 given thus:
\[
F\left(\varphi\right)\left(i\right)=\left\{ \sd j\varphi\left(i-1\right)<j\leq\varphi\left(i\right)\right\} 
\]
\end{example}

\subsubsection{The Categories $\Theta$ and $\Theta_{n}$}
\begin{defn}
Let 
\[
\gamma:\t\longrightarrow\t\int\t
\]
 be the obvious functor extending the assignment $\gamma\left(\left[n\right]\right)=\left[n\right];\left(\left[0\right],\dots,\left[0\right]\right)$.
We define the categories $\Theta_{n}$ to be the $n$-fold wreath
product of $\t$ with itself, i.e. we set
\[
\Theta_{n}=\underbrace{\t\int\left(\cdots\int\t\right)}_{n-\mathrm{times}}
\]
 We set $\Theta$ to be the conical colimit\footnote{An elementary argument about the projective-canonical and Reedy-canonical
model structures on $\CAT\left(\N,\Cat\right)$ provides that this
conical colimit enjoys the universal property of the pseudo-colimit.}
\[
\underset{\longrightarrow}{\lim}\left\{ \t\overset{\gamma}{\longrightarrow}\t\int\t\overset{\id\int\gamma}{\longrightarrow}\cdots\right\} 
\]
\end{defn}

\begin{rem}
It should also be noted that 
\[
\Theta\iso\t\int\Theta\iso\t\int\t\int\Theta\iso\cdots
\]
 so we may denote cells - where cells are the objects of $\Theta$
- in many compatible ways. For example for any $T$ a cell of $\Theta$
we may also write $T=\left[n\right];\left(T_{1},\dots,T_{n}\right)$
for some unique $n\in\N$ and unique $T_{1},\dots,T_{n}$ cells of
$\Theta$.
\end{rem}

\subsubsection{Reedy Structures on $\Theta_{n}$ and $\Theta$}

It is important to note that the categories $\Theta$ and $\Theta_{n}$
are Reedy categories. This fact was first proved in \cite{Berger1}
and again by more general means in \cite{BergnerRezk}.
\begin{defn}
\label{def:Reedy-on-theta-Berger}Inductively, we define 
\[
\lambda_{\Theta_{n+1}}:\Ob\left(\Theta_{n+1}\right)\longrightarrow\N
\]
by setting 
\[
\lambda_{\Theta_{n+1}}\left(\left[m\right];\left(T_{1},\dots,T_{m}\right)\right)=m+\sum_{i=1}^{m}\lambda_{\Theta_{n}}\left(T_{i}\right)
\]
 with $\lambda_{\Theta_{1}}=\lambda_{\t}$. The direct (wide) subcategories
$\Theta_{n}^{+},\Theta^{+}$ are subtended by the morphisms $f:S\longrightarrow T$
which pass to monomorphisms of presheaves under $\Yo$. Similarly
the inverse (wide) subcategories $\Theta_{n}^{-},\Theta^{-}$ are
subtended by the morphisms of $\Theta_{n}$ (resp. $\Theta$) which
pass to epimorphisms under $\Yo$.
\end{defn}

\begin{lem}
(Lemma 2.4(a) of \cite{Berger1}) The datum $\left(\Theta_{n},\Theta_{n}^{+},\Theta_{n}^{-},\lambda_{n}\right)$
(resp. $\left(\Theta,\Theta^{+},\Theta^{-},\lambda\right)$) comprises
a Reedy structure.
\end{lem}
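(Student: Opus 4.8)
The plan is to recover Berger's argument by induction on the wreath depth $n$, treating $\Theta$ as a colimit of the $\Theta_n$. The base case is $\Theta_1 = \t$, which carries the classical Reedy structure: $\lambda_\t\left(\left[n\right]\right) = n$, with $\t^{+}$ the wide subcategory of injective and $\t^{-}$ that of surjective monotone maps, and the usual epi--mono factorization; this is standard (see e.g. \cite{CisinskiMaltsiniotis}). For the colimit step, I would note that each comparison functor in $\t \xrightarrow{\gamma} \t\int\t \xrightarrow{\id\int\gamma} \cdots$ is fully faithful, since an object of the form $\left[m\right];\left(\left[0\right],\dots,\left[0\right]\right)$ forces the $\wh A$-component of any morphism into or out of it to be a tuple of identities, and $\t\int\left(-\right)$ preserves full-faithfulness; hence $\Theta = \colim_n \Theta_n$ is a filtered colimit of full subcategory inclusions. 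Because these inclusions send a cell to itself padded by $\left[0\right]$'s, they preserve $\lambda$ (a $\left[0\right]$-entry contributes $\lambda\left(\left[0\right]\right) = 0$), and being fully faithful they preserve and reflect monomorphisms and epimorphisms; thus a Reedy structure on $\Theta$ assembles from compatible ones on the $\Theta_n$, and a morphism of $\Theta$, lying in a least $\Theta_n$, is factored there. It then suffices to carry out the inductive step, and --- as will become clear --- to close the induction one must strengthen the hypothesis to the assertion that each $\Theta_n$ is an Eilenberg--Zilber, or elegant, Reedy category in the sense of \cite{BergnerRezk}.

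For the inductive step I would prove the general statement that if $A$ is a Reedy category then so is $\t\int_F A$, with $F:\t\to\Gamma$ as in Example~\ref{exa:from-simplex-category-to-segal's-gamma} and $\lambda_{\t\int A}\left(\left[m\right];\left(a_1,\dots,a_m\right)\right) = m + \sum_{i} \lambda_A\left(a_i\right)$. The direct subcategory $\left(\t\int A\right)^{+}$ consists of the morphisms $g;\mathbf f$ with $g\in\t^{+}$ and, for each $i$, the induced map $a_i \to \prod_{j\in F\left(g\right)\left(i\right)} c_j$ a monomorphism of $\wh A$ --- equivalently, the family $\left(f_{ji}\right)_{j\in F\left(g\right)\left(i\right)}$ jointly monic; \emph{a priori} the individual $f_{ji}$ need not lie in $A^{+}$. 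Dually, $\left(\t\int A\right)^{-}$ consists of the $g;\mathbf f$ with $g\in\t^{-}$ and every $f_{ji}\in A^{-}$; here there is no bundling, because $g$ surjective forces each $F\left(g\right)\left(i\right)$ to be empty or a singleton. That these are wide subcategories closed under composition is routine from the composition rule for $\mathbf q\circ\mathbf f$ in the wreath product and the analogous closure in $\t$ and in $A$ (a product of monomorphisms being a monomorphism). The degree conditions then follow from the formula for $\lambda$ together with the Reedy axioms on $\t$ and $A$: for a non-identity $g;\mathbf f\in\left(\t\int A\right)^{-}$ one has $m\ge n$ with $g=\id$ iff $m=n$, and $\lambda_A\left(a_{i}\right)\ge\lambda_A\left(c_{j}\right)$ for the relevant maps, strict unless the $f_{ji}$ are identities, forcing overall strictness unless $g;\mathbf f = \id$; for $\left(\t\int A\right)^{+}$ one uses dually that $m\le n$ and that a monomorphism of $A$ cannot lower degree (a consequence of the Eilenberg--Zilber property), together with the absence of nontrivial automorphisms in $\Theta_n$ (an easy induction).

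The hard part --- the content of Berger's lemma (\cite{Berger1}) --- is the unique factorization axiom for $\t\int A$. Given $g;\mathbf f:\left[m\right];\left(a_i\right)\to\left[n\right];\left(c_j\right)$ one first factors $g = g^{+}g^{-}$ through $\left[k\right]$ in $\t$; since the $\t$-component of a composite in $\t\int A$ is the composite in $\t$, this is forced, so the intermediate cell must have base $\left[k\right]$. The subtlety is that the $\wh A$-component does \emph{not} factor componentwise: for $i$ with $F\left(g^{-}\right)\left(i\right) = \left\{l\right\}$, all the maps $f_{ji}$ with $j\in F\left(g^{+}\right)\left(l\right)$ must share one inverse part $a_i \to b_l$, and individually they need not be monomorphisms at all --- witness in $\t\int\t$ the morphism $\left[1\right];\left(\left[2\right]\right)\to\left[2\right];\left(\left[1\right],\left[1\right]\right)$ with $g$ the injection $0\mapsto 0,\ 1\mapsto 2$ and $\left(f_{11},f_{21}\right) = \left(s^{0},s^{1}\right)$, whose bundle $\left(s^{0},s^{1}\right):\left[2\right]\hookrightarrow\left[1\right]\times\left[1\right]$ is monic although neither component is. What is required is that each bundled map $a_i \to \prod_{j\in F\left(g^{+}\right)\left(l\right)} c_j$ of $\wh A$ factor, uniquely, as $a_i \twoheadrightarrow b_l \hookrightarrow \prod_j c_j$ through a \emph{representable} $A^{b_l}$, with first leg in $A^{-}$ and the induced family jointly monic; for $A = \t$ this is the image factorization, the image of a monotone map into a product of simplices being a chain, hence again a simplex. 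This ``joint epi--mono factorization'' is exactly the Eilenberg--Zilber property, and the crux of the induction --- the step I expect to be the main obstacle --- is verifying that it propagates through $\t\int\left(-\right)$. Once that is in place, uniqueness of the factorization in $\t\int A$ reduces to uniqueness of $g = g^{+}g^{-}$ in $\t$ and of the joint factorizations in $A$, completing the inductive step and, via the colimit passage above, the statement for $\Theta$ as well; finally, the absence of nontrivial automorphisms makes $\N$-valued degrees sufficient, so the structures obtained are strict Reedy structures.
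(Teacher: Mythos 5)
The paper offers no proof of this lemma --- it is imported wholesale as Lemma 2.4(a) of \cite{Berger1} --- so what you have written is a reconstruction of Berger's argument, and its architecture is the right one: induction over wreath depth with $\t\int\left(-\right)$ as the inductive step, passage to $\Theta$ along the full-and-faithful, degree-preserving inclusions $\Theta_{n}\hookrightarrow\Theta_{n+1}$, the identification of $\left(\t\int A\right)^{+}$ as the maps $g;\mathbf{f}$ with $g\in\t^{+}$ and each bundle $\left(f_{ji}\right)_{j\in F\left(g\right)\left(i\right)}$ jointly monic, of $\left(\t\int A\right)^{-}$ as those with $g\in\t^{-}$ and each (necessarily single) component in $A^{-}$, and, crucially, the observation that unique factorization in $\t\int A$ requires a joint image factorization $a_{i}\twoheadrightarrow b_{l}\rightarrowtail\prod_{j}c_{j}$ through a representable.

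Two gaps remain, one local and one central. Locally, your degree argument for $\left(\t\int A\right)^{+}$ appeals to ``a monomorphism of $A$ cannot lower degree,'' but the components $f_{ji}$ of a direct map are not individually monomorphisms --- your own witness $\left(s^{0},s^{1}\right):\left[2\right]\to\left[1\right]\times\left[1\right]$ has $\lambda\left(\left[2\right]\right)=2>1=\lambda\left(\left[1\right]\right)$, so the componentwise inequality is false. What is needed is a degree estimate for jointly monic \emph{families}, of the shape $\lambda\left(a_{i}\right)\leq\sum_{j\in F\left(g\right)\left(i\right)}\lambda\left(c_{j}\right)$, proved as part of the same induction (for $\t$ it is the statement that a chain embedded in a product of chains is no longer than the sum of their lengths), after which disjointness of the sets $F\left(g\right)\left(i\right)$ together with $m\leq n$ gives the required inequality, with strictness handled separately. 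Centrally, the existence, uniqueness, and inductive propagation through $\t\int\left(-\right)$ of the joint split-epi/jointly-monic factorization --- which you yourself single out as ``the crux'' and ``the main obstacle'' --- is named but never established; since that propagation \emph{is} the content of Berger's lemma, the proposal as written reduces the statement to its hardest part without discharging it. Until you prove that every family of maps out of a fixed object of $\t\int A$ into a finite list of objects factors, uniquely up to unique isomorphism, as a map of $\left(\t\int A\right)^{-}$ followed by a jointly monic family, assuming the corresponding statement for $A$, the factorization axiom for $\Theta_{n+1}$ remains open.
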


\subsubsection{Inner horns and $\omega$-categories, outer horns and $\omega$-groupoids}
\begin{defn}
\label{def:horns-a-la-Berger}Recall that a codimension 1 face $d:\left[m\right]\longrightarrow\left[m+1\right]$
of is \textbf{inner} if it preserves top and bottom elements and \textbf{outer}
if it does not. Inductively, we may then define \textbf{inner faces
of $\Theta_{n}$} as those maps 
\[
f;\left(\mathbf{g}\right):\left[m\right];\left(S_{1},\dots,S_{m}\right)\longrightarrow\left[n\right];\left(T_{1},\dots,T_{n}\right)
\]
 for which:
\begin{itemize}
\item $f$ is inner for $\t$;
\item and each component $g_{ji}:S_{i}\longrightarrow T_{j}$ of $\mathbf{g}$
is inner for $\Theta_{n-1}$.
\end{itemize}
Given a face $\kappa:S\longrightarrow T$, let $\Lambda^{\kappa}\left[T\right]\subset\Theta\left[T\right]$
be the colimit 
\[
\underset{\Theta^{+}\downarrow T-\left\{ \id,\kappa\right\} }{\colim}\Theta\left[S\right].
\]
 Then the canonical inclusion $\Lambda^{\kappa}\left[T\right]\longrightarrow\Theta\left[T\right]$
is to be called an an \textbf{inner (resp. outer) horn of $\Theta\left[T\right]$}
if $\kappa$ is an inner (resp. outer) face of $T$. Let 
\[
\Lambda_{\mathsf{InnerBerger}}=\left\{ \sd{\Lambda^{\kappa}\left[T\right]\longrightarrow\Theta\left[T\right]}\kappa\in\mathsf{InnerFace}\left(T\right),\ T\in\Ob\left(\Theta\right)\right\} 
\]
 and let 
\[
\Lambda_{\mathsf{Berger}}=\left\{ \sd{\Lambda^{\kappa}\left[T\right]\longrightarrow\Theta\left[T\right]}\kappa\in\mathsf{Face}\left(T\right),\ T\in\Ob\left(\Theta\right)\right\} .
\]
\end{defn}

\begin{prop}
\label{prop:Horn-filling-strict-shit}(\cite{Berger1}) An $n$-cellular
set (resp. cellular set) is the nerve of a strict $n$-category (resp.
$\omega$-category) if and only if it is right orthogonal to the regulus
$\Lambda_{\mathsf{InnerBerger}}$. Similarly, an $n$-cellular set
(resp. cellular set) is the nerve of a strict $n$-groupoid (resp.
strict $\omega$-groupoid) precisely if it is right orthogonal to
the regulus $\Lambda_{\sf{Berger}}$.
\end{prop}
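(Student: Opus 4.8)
The plan is to prove both biconditionals on the same template, deducing the unbounded case (cellular sets, strict $\omega$-categories) from the $n$-truncated ones via $\Theta\iso\colim_{n}\Theta_{n}$ together with the fact that right-orthogonality against a regulus may be checked one $\Theta_{n}$ at a time. First I would recall, from \cite{Berger1} (reproved by more general means in \cite{BergerMelliesWeber}), that the inclusion $\Theta\hookrightarrow\StrCat$ is dense, and similarly $\Theta_{n}$ is dense in the category of strict $n$-categories; hence the cellular nerve $N\colon\StrCat\longrightarrow\wh{\Theta}$ is fully faithful, so the content of the first statement is exactly the identification of its essential image with the orthogonality class of $\Lambda_{\mathsf{InnerBerger}}$. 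For the groupoidal half, strict $\omega$-groupoids form the full subcategory of $\StrCat$ on those $\omega$-categories all of whose cells are invertible, and $\Theta$ remains dense there (equivalently, strict $\omega$-groupoids are the algebras for a monad on $\wh{\bG}$ with the same arities as $T$), so again it suffices to pin down an essential image.

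Next I would invoke Berger's cellular nerve theorem — or, in the language of the introduction, the abstract nerve theorem of \cite{BergerMelliesWeber} applied to the arities $\Theta$ of $T$, together with the density presentations of \cite{BourkeGarner}: a cellular set $X$ lies in the essential image of $N$ precisely when, for every cell $T=[m];(T_{1},\dots,T_{m})$, restriction along the spine inclusion $\mathrm{Sp}[T]\hookrightarrow\Theta[T]$ — the subobject of $\Theta[T]$ obtained by wreathing the spine of $[m]$ with the spines of the $T_{i}$ — induces a bijection $\Hom(\Theta[T],X)\liso\Hom(\mathrm{Sp}[T],X)$. Equivalently, $X$ is the nerve of a strict $\omega$-category iff it is right orthogonal to the regulus of spine inclusions (the strict Segal condition). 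So, modulo the final step, both reguli will have carved out the desired subcategory once one knows that the spine inclusions and $\Lambda_{\mathsf{InnerBerger}}$ share a right-orthogonal complement.

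That final step is the crux, and is where I expect essentially all of the work to be. One inclusion of complements is easy: if $X$ is spine-orthogonal then $X\cong NC$ for a strict $\omega$-category $C$, every composable configuration of cells of $C$ has a unique composite, and so $NC$ fills each inner horn $\Lambda^{\kappa}[T]\longrightarrow\Theta[T]$ uniquely; thus spine-orthogonal implies $\Lambda_{\mathsf{InnerBerger}}$-orthogonal. For the converse I would show that every spine inclusion $\mathrm{Sp}[T]\hookrightarrow\Theta[T]$ lies in the saturation of $\Lambda_{\mathsf{InnerBerger}}$ under pushout, transfinite composition and retract, by induction on the Reedy degree $\lambda(T)$: one realizes $\mathrm{Sp}[T]\hookrightarrow\Theta[T]$ as a finite composite of pushouts, attaching the missing inner faces of $T$ in order of increasing degree, each attachment being a pushout of an inner horn of strictly smaller degree — which is precisely the assertion that a freely generated pasting diagram admits a unique composite. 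This is the delicate combinatorial core of \cite{Berger1}, the ``sleight of hand'' mentioned in the introduction; granting it, $\Lambda_{\mathsf{InnerBerger}}$-orthogonal implies spine-orthogonal, which closes the strict-$\omega$-category case.

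Finally, for the groupoidal statement one runs the identical argument with $\Lambda_{\mathsf{InnerBerger}}$ replaced by the full horn regulus $\Lambda_{\sf{Berger}}$, whose additional (outer) horns encode, on top of composition, a strict two-sided inverse for each cell together with the identities witnessing invertibility. The forward direction is again immediate — a strict $\omega$-groupoid nerve fills every horn uniquely, by composing and inverting — and for the converse one observes that, in this strictly $1$-categorical rather than homotopical setting, uniqueness of fillers against the outer horns forces each cell of the underlying $\omega$-category of $X$ to possess an honest inverse, so that $X$ is the nerve of a strict $\omega$-category with all cells invertible, i.e.\ of a strict $\omega$-groupoid. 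The spine-saturation argument of the preceding paragraph is untouched, since spines involve only inner faces and $\Lambda_{\mathsf{InnerBerger}}\subseteq\Lambda_{\sf{Berger}}$; the remaining groupoidal bookkeeping is exactly that of \cite{Berger1}, to which I would ultimately defer for the detailed combinatorics.
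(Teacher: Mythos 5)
Your outline is essentially sound, but it reconstructs Berger's original two-stage argument rather than the route this paper takes. Here the proposition itself carries no proof (it is recalled from \cite{Berger1}), and the argument the paper does eventually supply, in Example \ref{exa:Inner-horns-by-density-presentation}, is organized differently: instead of first characterizing nerves by spine-orthogonality (the abstract nerve theorem applied to the wide-span density presentation) and then showing that the spine inclusions and $\Lambda_{\mathsf{InnerBerger}}$ have the same right-orthogonal complement, it chooses a \emph{different} density presentation $\Upsilon$ of the identity $\Theta\longrightarrow\Theta$, exhibiting each $\Theta\left[T\right]$ as the colimit of its hyperfaces other than a chosen $\kappa$, so that the canonical comparison maps of Proposition \ref{prop:bourke-garner-density-pres-to-regulus-machine} are the inner horn inclusions ``by design.'' That choice delivers the inner-horn statement in one step and entirely sidesteps the claim on which essentially all the weight of your version rests --- that every spine inclusion lies in the saturation of $\Lambda_{\mathsf{InnerBerger}}$ under pushout, transfinite composition and retract. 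That claim is true, but it is precisely the ``slight of hand'' the introduction alludes to and the delicate combinatorial core of \cite{Berger1}; you defer it wholesale, so your proof is only as complete as that deferral. What your route buys in exchange is the explicit link to the Segal condition. On the groupoidal half both treatments ultimately cite \cite{Berger1}; your assertion that unique outer-horn filling forces honest strict inverses is the one place where you state rather than argue, and it deserves at least the remark that the inner horns (already in $\Lambda_{\sf{Berger}}$) must first be used to produce the underlying $\omega$-category before the outer horns can be read as supplying inverses.
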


\subsection{$\protect\Z$-categories}
\begin{defn}
Denote by $S$ the functor
\[
\xyR{0pc}\xyC{5pc}\xymatrix{S:\bG\ar[r] & \bG\\
\overline{n}\ar@{|->}[r] & \overline{n+1}
}
\]
This functor induces $S^{*}:\wh{\bG}\longrightarrow\wh{\bG}$ which
``pulls down'' $n$-cells as $\left(n-1\right)$-cells and forgets
the preexisting $0$-cells. This operation preserves $T$-algebras;
$S^{*}$ induces a functor $S^{*}:\wh{\bG}^{T}\longrightarrow\wh{\bG}^{T}$.
\end{defn}

What then we ask is an object of the limit
\[
\llim\left\{ \cdots\rightarrow\StrCat\xrightarrow{S^{*}}\StrCat\xrightarrow{S^{*}}\StrCat\right\} 
\]
which we consider as the limit of the diagram $\mathbf{S}^{*}:\Z_{\leq0}\longrightarrow\CAT$
which sends each $-n$ to $\wh{\bG}^{T}$ and each $-\left(n+1\right)\longrightarrow-n$
to $S^{*}$.
\[
\cdots\xrightarrow{S^{*}}\wh{\bG}^{T}\xrightarrow{S^{*}}\wh{\bG}^{T}
\]
Now, Eilenberg-Moore objects for a monad $\left(T,\mu,\eta\right)$
in a $2$-category $A$ (here $\CAT$) are but lax-limits over the
diagram $\left\lceil T\right\rceil :\bfTwo\longrightarrow A$, which
classifies the underlying endo-functor. Then, since the taking of
limits commutes with the taking of limits, we find that there exists
some monad $\llim T$ for which we have an isomorphism
\[
\llim\left(\wh{\bG}^{T}\right)\liso\llim\wh{\bG}^{\llim T}
\]
More, we have an isomorphism
\[
\llim\wh{\bG}\liso\wh{\bG_{\Z}}
\]
with $\bG_{\Z}$ defined as follows.
\begin{defn}
Let $\bG_{\Z}$ be the category 
\[
\left\langle \left.\vcenter{\vbox{\xyR{3pc}\xyC{3pc}\xymatrix{\cdots\ar@/^{.75pc}/[r]^{s}\ar@/_{.75pc}/[r]_{t} & \overline{-1}\ar@/^{.75pc}/[r]^{s}\ar@/_{.75pc}/[r]_{t} & \overline{0}\ar@/^{.75pc}/[r]^{s}\ar@/_{.75pc}/[r]_{t} & \overline{1}\ar@/^{.75pc}/[r]^{s}\ar@/_{.75pc}/[r]_{t} & \cdots}
}}\right|\begin{array}{c}
s\circ t=s\circ s\\
t\circ t=t\circ s
\end{array}\right\rangle 
\]
\end{defn}

Using the generalized spectrification of Theorem \ref{thm: generalized-spectrification-strict-limit}
we observe that the monad $\llim T$, which we'll denote now by $T_{\Z}$,
is but $\mathsf{sp}\circ\oplaxlim T$ where $\sf{sp}$ is the left
adjoint to the inclusion 
\[
\llim\left\{ \cdots\xrightarrow{S^{*}}\wh{\bG}^{T}\xrightarrow{S^{*}}\wh{\bG}^{T}\right\} \longrightarrow\oplaxlim\left\{ \cdots\xrightarrow{S^{*}}\wh{\bG}^{T}\xrightarrow{S^{*}}\wh{\bG}^{T}\right\} 
\]

\begin{defn}
Let the category $\wh{\bG_{\Z}}^{T_{\Z}}$ be called the $1$-category
of \textbf{strict-$\Z$-categories}.
\end{defn}

\part{Limits in the $2$-category of Categories with Arities}

In Proposition \ref{prop:Horn-filling-strict-shit} above we recalled
Berger's characterization of strict $\omega$-categories and strict
$\omega$-groupoids as cellular sets satisfying horn-filling conditions.
In this second part we develop theory enough to allow us to extend
this result to characterize $\Z$-categories as an orthogonal subcategory
of a category of $\Z$-cellular sets. We do this by developing the
theory of limits in Berger, Mellies and Weber's $2$-category with
arities and Bourke and Garner's synthesis of that work with Day's
density presentations.

\section{Basic definitions: Variations on the $2$-category $\protect\CwA$ }

In \cite{BergerMelliesWeber} we find the following $2$-category
defined.
\begin{defn}
\label{def:BMW-definition-of-CwA}Let $\CwA$, the $2$-category of
\textbf{categories with arities,} be the $2$-category with:
\begin{description}
\item [{CWA0}] $0$-cells are locally small categories $\sfE$ together
with a dense, small, full subcategory $\sfA\overset{I}{\hookrightarrow}\sfE$,
denoted $\left(\sfA,\sfE,I\right)$;
\item [{CWA1}] $1$-cells $\left(\sfA,\sfE,I\right)\longrightarrow\left(\sfB,\sfF,J\right)$,
so called \textbf{arities-respecting} \textbf{functors}, are functors
$K:\sfE\longrightarrow\sfF$ such the the squares 
\[
\xyR{3pc}\xyC{3pc}\xymatrix{\sfE\ar[r]^{\sN_{I}}\ar[d]_{K} & \wh{\sfA}\ar[d]^{\overline{K}}\\
\sf{\sfF}\ar[r]^{\sN_{J}} & \wh{\sfB}
}
\]
pseudo-commute, where $\overline{K}$ is (and will remain by convention
- at least most of the time) the following left Kan extension
\[
\xyR{.5pc}\xyC{3pc}\xymatrix{\sfA\ar[r]^{I}\ar[dr]_{I} & \sfE\ar[r]^{\sN_{I}} & \wh{\sfA}\ar[ddddd]^{\overline{K}=\mathsf{Lan}_{\Yo_{\sfA}}\left(\sN_{J}\circ K\circ I\right)}\\
 & \sfE\ar[ddd]_{K}\\
\\
\\
 & \sf{\sfF}\ar[dr]_{\sN_{J}}\\
 &  & \wh{\sfB}
}
\]
\item [{CWA2}] $2$-cells $\alpha:K\Rightarrow L:\left(\sfA,\sfE\right)\rightarrow\left(\sfB,\sfF\right)$
are (arbitrary) natural transformations $\alpha:K\Rightarrow L:\sfE\rightarrow\sfF$.
\end{description}
\end{defn}

Perhaps unsurprisingly we can, in a sense, switch the roles of $K$
and $\overline{K}$ in the definition of the $1$ and $2$-cells of
$\CwA$ - from $K$ as datum and $\overline{K}$ as part of a condition
to the reverse.
\begin{lem}
Given categories with arities $\left(\sfA,\sfE,I\right)$ and $\left(\sfB,\sfF,J\right)$,
and a co-continuous functor $\overline{K}:\wh{\sfA}\longrightarrow\wh{\sfB}$,
if the square
\[
\xyR{3pc}\xyC{3pc}\xymatrix{\sfE\ar[r]^{\sN_{I}}\ar[d]_{G} & \wh{\sfA}\ar[d]^{\overline{K}}\\
\sf{\sfF}\ar[r]_{\sN_{J}} & \wh{\sfB}
}
\]
pseudo-commutes, then $\overline{K}=\Lan_{\Yo}\left(\sN_{J}\circ G\circ I\right)$
.
\end{lem}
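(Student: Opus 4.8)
The plan is to exploit the fact that the nerve functor $\sN_I : \sfE \to \wh{\sfA}$ exhibits $\wh{\sfA}$ as a free cocompletion relative to the dense subcategory, so that co-continuous functors out of $\wh{\sfA}$ are pinned down by their restriction along $I$. Concretely, I would argue in two steps: first identify $\sN_J \circ G \circ I$ with the composite $\overline K \circ \sN_I \circ I = \overline K \circ \Yo_\sfA$ using the pseudo-commuting square, and second invoke the universal property of left Kan extension along $\Yo_\sfA$ for co-continuous $\overline K$.

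\emph{First} I would restrict the given pseudo-commuting square along $I : \sfA \hookrightarrow \sfE$. Precomposing $\sN_I \circ G$ with $I$ gives $\sN_I \circ G \circ I$, but by density of $I$ in $\sfE$ the restricted nerve satisfies $\sN_I \circ I \cong \Yo_\sfA$ (this is precisely what it means for $\sfA \hookrightarrow \sfE$ to be a category with arities, since the nerve $\sN_I$ is the restricted Yoneda embedding $e \mapsto \sfE(I(-), e)$). Hence along the top-then-right path we obtain $\overline K \circ \sN_I \circ I \cong \overline K \circ \Yo_\sfA$, while along the left-then-bottom path we obtain $\sN_J \circ G \circ I$. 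The pseudo-commuting $2$-cell of the square restricts to a natural isomorphism
\[
\overline K \circ \Yo_\sfA \;\cong\; \sN_J \circ G \circ I : \sfA \longrightarrow \wh{\sfB}.
\]

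\emph{Second} I would use that $\overline K : \wh{\sfA} \to \wh{\sfB}$ is co-continuous and that $\wh{\sfA}$ is the free cocompletion of $\sfA$ under $\Yo_\sfA$; therefore $\overline K$ is canonically (isomorphic to) the left Kan extension of $\overline K \circ \Yo_\sfA$ along $\Yo_\sfA$, i.e. every co-continuous functor out of a presheaf category is determined by its restriction to representables via $\overline K \cong \Lan_{\Yo_\sfA}(\overline K \circ \Yo_\sfA)$. Substituting the natural isomorphism from the previous step yields
\[
\overline K \;\cong\; \Lan_{\Yo_\sfA}\!\left(\overline K \circ \Yo_\sfA\right) \;\cong\; \Lan_{\Yo_\sfA}\!\left(\sN_J \circ G \circ I\right),
\]
which is the claimed identity (up to the usual choice-of-Kan-extension isomorphism, which is the sense in which equalities of functors are always meant here).

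\emph{The main obstacle}, such as it is, is bookkeeping rather than mathematics: one must be careful that ``co-continuous functor out of $\wh{\sfA}$ is a left Kan extension along Yoneda of its restriction to representables'' is being applied with the correct variance and that the isomorphism $\sN_I \circ I \cong \Yo_\sfA$ is invoked as part of the definition of a category with arities (so no density argument beyond \textbf{CWA0} is actually needed). One should also note explicitly that the result does not require $G$ itself to be arities-respecting — only the pseudo-commutativity of the square with the \emph{given} co-continuous $\overline K$ is used — so this lemma is genuinely the converse direction of the remark preceding it, allowing $\overline K$ to be taken as primary datum with $G$ and the square as the condition.
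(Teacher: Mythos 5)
Your proof is correct and rests on the same two ingredients as the paper's own argument: the factorization $\Yo_{\sfA}\cong\sN_{I}\circ I$ and the fact that a co-continuous functor out of $\wh{\sfA}$ is the left Kan extension along $\Yo_{\sfA}$ of its restriction to representables; you simply restrict the square along $I$ at the outset, where the paper instead names $K$ with $\overline{K}=\Lan_{\Yo}\left(K\right)$ and peels off $\sN_{I}$ and then $I$ at the end using their full-and-faithfulness. One small wording point: the isomorphism $\sN_{I}\circ I\cong\Yo_{\sfA}$ follows from the full-and-faithfulness of $I$ (part of \textbf{CWA0}), not from its density per se --- density is what makes $\sN_{I}$ itself full-and-faithful.
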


\begin{proof}
Since $\overline{K}$ is co-continuous and $\wh{\sfA}$ is a presheaf
category, whence it is the free completion of $\sfA$ under colimits,
it follows that $\overline{K}=\Lan_{\Yo}\left(K\right)$ for some
$K:\sfA\longrightarrow\wh{\sfB}$. More, since 
\[
\Lan_{\Yo}\left(K\right)=\Lan_{\sN_{I}}\left(\Lan_{I}\left(K\right)\right)
\]
and the square pseudo-commutes by hypothesis we have 
\[
\sN_{J}\circ G=\Lan_{\sN_{I}}\left(\Lan_{I}\left(K\right)\right)\circ\sN_{I}
\]
which, since $\sN_{I}$ is full-and-faithful, provides that 
\[
\sN_{J}\circ G=\Lan_{I}\left(K\right)
\]
which in turn provides, by pre-composition, the equality
\[
\sN_{J}\circ G\circ I=\Lan_{I}\left(K\right)\circ I
\]
and lastly, by the full-and-faithfulness of $I$, the equality
\[
\sN_{J}\circ G\circ I=K
\]
\end{proof}
In light of the lemma above, we provide an alternative description
of $\CwA$.
\begin{cor}
The $2$-category defined by the axioms \textbf{CWA0 }from\textbf{
}in Definition \ref{def:BMW-definition-of-CwA} with the further axioms
\begin{description}
\item [{CWA1$*$}] $1$-cells $\left(\fA,\sfE,I\right)\rightarrow\left(\sfB,\sfF,J\right)$
are co-continuous functors $\overline{K}:\wh{\sfA}\rightarrow\wh{\sfB}$
which pseudo-factor as in 
\[
\xyR{3pc}\xyC{3pc}\xymatrix{\fE\ar[r]\ar@{-->}[d] & \wh{\sfA}\ar[d]\\
\fF\ar[r] & \wh{\sfB}
}
\]
\end{description}
and we may replace \textbf{CWA2 }in that definition with
\begin{description}
\item [{CWA2$*$}] $2$-cells $\overline{K}\Longrightarrow\overline{L}:\left(\fA,\sfE,I\right)\rightarrow\left(\sfB,\sfF,J\right)$
of $\CwA_{\mathsf{}}$ are natural transformations of the underlying
co-continuous functors.
\end{description}
comprise a $2$-category $\CwA^{\p}$ which is bi-equivalent to $\CwA$
$2$-category.
\end{cor}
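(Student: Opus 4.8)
The plan is to construct a pseudo-$2$-functor $U\colon\CwA\longrightarrow\CwA^{\p}$ that is the identity on $0$-cells, and then to verify it is a bi-equivalence by checking it is (evidently) surjective on $0$-cells and that each induced functor on hom-categories is an equivalence. On $1$-cells $U$ sends an arities-respecting functor $K\colon\sfE\to\sfF$ to the co-continuous functor $\overline{K}=\Lan_{\Yo_{\sfA}}(\sN_{J}\circ K\circ I)$; this is a legitimate $1$-cell of $\CwA^{\p}$ precisely because \textbf{CWA1} of Definition~\ref{def:BMW-definition-of-CwA} asserts that the nerve square pseudo-commutes, i.e. that $\overline{K}$ pseudo-factors through $K$. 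On $2$-cells $U$ sends $\alpha\colon K\Rightarrow L$ to $\overline{\alpha}:=\Lan_{\Yo_{\sfA}}(\sN_{J}\ast\alpha\ast I)\colon\overline{K}\Rightarrow\overline{L}$ via functoriality of the left Kan extension. Preservation of identities and composition up to coherent isomorphism follows from the standard composition law $\Lan_{\Yo}(G)\circ\Lan_{\Yo}(F)\cong\Lan_{\Yo}(\Lan_{\Yo}(G)\circ F)$ for left Kan extensions along Yoneda embeddings together with the pseudo-commuting squares, so $U$ is a well-defined pseudofunctor; being the identity on objects, it is in particular surjective on $0$-cells.

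It remains to show that for all categories with arities $(\sfA,\sfE,I)$ and $(\sfB,\sfF,J)$ the functor $U_{\sfE,\sfF}\colon\CwA((\sfA,\sfE,I),(\sfB,\sfF,J))\to\CwA^{\p}((\sfA,\sfE,I),(\sfB,\sfF,J))$ is an equivalence. For essential surjectivity, let $\overline{K}\colon\wh{\sfA}\to\wh{\sfB}$ be a $1$-cell of $\CwA^{\p}$, so it is co-continuous and \textbf{CWA1$*$} furnishes some $G\colon\sfE\to\sfF$ making the displayed square pseudo-commute. The Lemma immediately preceding this Corollary then gives $\overline{K}=\Lan_{\Yo}(\sN_{J}\circ G\circ I)$, and since that same pseudo-commuting square is exactly the condition \textbf{CWA1} that makes $G$ arities-respecting, $G$ is a $1$-cell of $\CwA$ with $U(G)=\overline{G}=\overline{K}$ on the nose. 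For faithfulness and fullness the key input is that density of $\sfA$ in $\sfE$ (resp.\ of $\sfB$ in $\sfF$) means precisely that $\sN_{I}$ (resp.\ $\sN_{J}$) is fully faithful. Given $\alpha,\beta\colon K\Rightarrow L$ in $\CwA$, restricting $\overline{\alpha},\overline{\beta}$ along $\sN_{I}$ and transporting through the pseudo-commuting isomorphisms $\overline{K}\circ\sN_{I}\cong\sN_{J}\circ K$ and $\overline{L}\circ\sN_{I}\cong\sN_{J}\circ L$ recovers $\sN_{J}\ast\alpha$ and $\sN_{J}\ast\beta$; faithfulness of $\sN_{J}$ then forces $\alpha=\beta$ whenever $\overline{\alpha}=\overline{\beta}$. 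For fullness, given $\gamma\colon\overline{K}\Rightarrow\overline{L}$, transporting $\gamma\ast\sN_{I}$ through the same isomorphisms produces a natural transformation $\sN_{J}\circ K\Rightarrow\sN_{J}\circ L$, which by full faithfulness of $\sN_{J}$ has the form $\sN_{J}\ast\alpha$ for a unique $\alpha\colon K\Rightarrow L$; one then checks $U(\alpha)=\overline{\alpha}=\gamma$ by observing that $\overline{\alpha}$ and $\gamma$ are natural transformations between co-continuous functors on $\wh{\sfA}$ whose restrictions along $\Yo_{\sfA}=\sN_{I}\circ I$ coincide, and such transformations are determined on representables.

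I expect the main obstacle to be the coherence bookkeeping in this last step: one must perform the various transports through the \textbf{CWA1}-isomorphisms compatibly so that the reconstructed $\overline{\alpha}$ is genuinely \emph{equal} to $\gamma$, not merely isomorphic to it. Here one uses that $\sN_{I}\circ I=\Yo_{\sfA}$ holds strictly, that the comparison $\Lan_{\Yo}(F)\circ\Yo\Rightarrow F$ is invertible since $\Yo$ is fully faithful, and that the chosen pseudo-commuting $2$-cell in \textbf{CWA1} is the one used throughout; tracking these reduces the claim to the density fact that co-continuous functors out of a presheaf category, and natural transformations between them, are detected by restriction to representables. Beyond this bookkeeping no genuinely hard point arises, essential surjectivity already being on the nose via the preceding Lemma, so surjectivity on objects plus the local equivalence just established yields the asserted bi-equivalence.
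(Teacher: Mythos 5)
Your proposal is correct and supplies exactly the argument the paper leaves implicit: the paper gives no proof of this corollary beyond the preceding lemma, which is precisely your essential-surjectivity step, and your handling of the $2$-cells via full-faithfulness of the nerves $\sN_I,\sN_J$ together with the fact that cocontinuous functors out of $\wh{\sfA}$ (and transformations between them) are determined on representables is the intended bookkeeping. No gaps.
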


\begin{rem}
We will never envoke $\CwA^{\p}$ and $\CwA$ as distinct again.
\end{rem}

\begin{example}
In the category of globular sets, \textbf{$\wh{\bG}$}, we may identify
the subcategory of \textbf{linear pasting diagrams }of globes: let
$\bP\longrightarrow\wh{\bG}$ denote the full subcategory of $\wh{\bG}$
subtended by objects of the form
\[
\colim\left\{ \vcenter{\vbox{\xyR{1.5pc}\xyC{1.5pc}\xymatrix{\overline{n_{0}} &  & \overline{n_{1}} &  & \overline{n_{\ell-1}} &  & \overline{n_{\ell}}\\
 & \overline{m_{1}}\ar[ul]|-{t^{n_{0}-m_{1}}}\ar[ur]|-{s^{n_{1}-m_{1}}} &  & \cdots\ar[ur]\ar[ul] &  & \overline{m_{\ell-1}}\ar[ul]|-{t^{n_{\ell-1}-m_{\ell-1}}}\ar[ur]|-{s^{n_{\ell}-m_{\ell-1}}}
}
}}\right\} 
\]
for lists of non-negative integers, 
\[
n_{0},m_{1},n_{1},\dots,n_{\ell-1},m_{\ell-1},n_{\ell}
\]
with each $m_{i}\leq n_{i-1},n_{i}$. Since the subcategory $\bP\longrightarrow\wh{\bG}$
factors the Yoneda embedding it is dense, whence $\left(\bP,\wh{\bG},I\right)$
is a $0$-cell of $\CwA$.
\end{example}

\section{Monads with Arities}
\begin{defn}
A monad $T$ on $\sfE$ has arities $\sfA$ if the endofunctor $T$
is arities-respecting in the sense of \textbf{CWA1}.
\end{defn}

\begin{rem}
Those concerned with the fact that no criteria are imposed on the
$2$-cells of the monad $T$ may appeal to Lemma \ref{lem:Extension-of-monads}.
\end{rem}

\begin{lem}
\label{lem:monad-with-arities-and-presheaf-crap} Given a monad $T\circlearrowright\sfE$
and category with arities $\left(\sfA,\sfE,I\right)$, $T$ has arities
$\sfA$ precisely if, setting $\overline{T}=\Lan_{\sN_{I}}\left(\sN_{I}\circ T\right)$
, we have that $\wh{\sfA}^{\overline{T}}\liso\wh{\sfA_{T}}$.
\end{lem}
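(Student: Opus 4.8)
The plan is to invoke the theory of monads with arities developed in \cite{BergerMelliesWeber}, whose central theorem (the "nerve theorem" for monads with arities) states precisely that when $T$ has arities $\sfA$, the nerve functor $\sN_{I}$ restricts to an equivalence between the category $\sfE^{T}$ of $T$-algebras and the category of presheaves on the Kleisli-type category $\sfA_{T}$ which lie in the essential image of the nerve. First I would recall the construction of $\sfA_{T}$: it is the full subcategory of $\sfE^{T}$ spanned by the free algebras $T I(a)$ for $a \in \sfA$, equivalently the Kleisli category of $T$ restricted along $I$; there is a canonical fully faithful functor $\sfA_{T} \hookrightarrow \sfE^{T}$ and the associated nerve $\sfE^{T} \longrightarrow \wh{\sfA_{T}}$. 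The identification $\overline{T} = \Lan_{\sN_{I}}(\sN_{I} \circ T)$ is exactly the endofunctor on $\wh{\sfA}$ appearing in \textbf{CWA1} as the mate of the pseudo-commuting square for $T$, so by Lemma~\ref{lem:Extension-of-monads} (referenced in the preceding remark) $\overline{T}$ carries a canonical monad structure making $\sN_{I} : \sfE \longrightarrow \wh{\sfA}$ a map of monads, indeed a monadic-over situation.

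Next I would spell out the two directions. For the direction $\wh{\sfA}^{\overline{T}} \liso \wh{\sfA_{T}}$ assuming $T$ has arities $\sfA$: since $\sN_{I}$ is fully faithful and arities-respecting for $T$, the square exhibiting $T$ as arities-respecting is a genuine morphism-of-monads square with $\sN_{I}$ fully faithful; a standard lemma (Dubuc's adjoint triangle / the theory of "monadic adjunctions and fully faithful functors") then gives that $\sfE^{T} \longrightarrow \wh{\sfA}^{\overline{T}}$ is fully faithful, and one checks it is essentially surjective because every $\overline{T}$-algebra is a colimit (in $\wh{\sfA}$) of free ones $\overline{T}\,\Yo(a) = \sN_{I}(TI(a))$ which are nerves, and the nerve criterion says a presheaf is a nerve iff it is a suitable colimit of representables-under-$\overline{T}$. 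Composing with the identification $\sfE^{T} \simeq \wh{\sfA_{T}}|_{\mathrm{image}}$ and observing that $\wh{\sfA}^{\overline{T}}$ is itself equivalent to presheaves on $\sfA_{T}$ (because $\overline{T}$ is a "monad with arities $\sfA$" on $\wh{\sfA}$ in the sense that its Kleisli category on the representables is $\sfA_{T}$) yields the claimed equivalence $\wh{\sfA}^{\overline{T}} \liso \wh{\sfA_{T}}$. Conversely, if $\wh{\sfA}^{\overline{T}} \liso \wh{\sfA_{T}}$, then in particular the nerve $\sfE^{T} \longrightarrow \wh{\sfA_{T}}$ (which always factors through $\wh{\sfA}^{\overline{T}}$ since $\sN_{I} T = \overline{T}\sN_{I}$ up to the pseudo-iso) is fully faithful and dense-reflecting, and unwinding what this says at the level of the square forces the comparison maps $\colim_{\sfA \downarrow e} \sN_{I} T \longrightarrow \overline{T}\,\sN_{I}(e)$ to be isomorphisms, i.e. $T$ is arities-respecting.

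The main obstacle I anticipate is the "conversely" direction — extracting from a bare equivalence of presheaf categories $\wh{\sfA}^{\overline{T}} \liso \wh{\sfA_{T}}$ the pointwise pseudo-commutativity of the nerve square for $T$. The clean way around this is to not prove a bare logical equivalence but rather to organize both sides as the statement "$\overline{T}$ is a monad with arities $\sfA$ on $(\wh{\sfA}, \wh{\sfA}, \Yo)$", invoke the Berger–Mellies–Weber nerve theorem in the form that for a monad-with-arities the Eilenberg–Moore category is modelled by presheaves on the $\Theta_{0}$-category $\sfA_{T}$, and then use the fully-faithfulness of $\sN_{I}$ together with the fact (proved in \cite{BergerMelliesWeber}) that a fully faithful nerve transports the monad-with-arities property along it. So the proof reduces to: (i) cite that $T$ has arities $\sfA$ iff $\overline{T}$ has arities $\sfA$ on $\wh{\sfA}$ — this is the content of the lifting result, using fully-faithfulness of $\sN_{I}$; and (ii) cite that for $\overline{T}$ with arities $\sfA$ on $\wh{\sfA}$, the Kleisli-on-representables category is $\sfA_{T}$ and the nerve theorem gives $\wh{\sfA}^{\overline{T}} \liso \wh{\sfA_{T}}$. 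I would present the argument in that two-step form, flagging that step (i) is where the fully-faithfulness of the dense inclusion $I$ (hence of $\sN_{I}$) does all the work, and that step (ii) is a direct application of the abstract nerve theorem to the category with arities $(\sfA, \wh{\sfA}, \Yo)$.
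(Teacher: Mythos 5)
Your forward direction contains the right kernel, but it is buried under material that is not needed for the statement: the claim to be proved concerns only $\wh{\sfA}^{\overline{T}}$ and $\wh{\sfA_{T}}$, and the whole content is that (1) the arities condition makes $\overline{T}=\Lan_{\sN_{I}}\left(\sN_{I}\circ T\right)$ cocontinuous, (2) the Eilenberg--Moore category of a cocontinuous monad on $\wh{\sfA}$ is the presheaf category on the Kleisli category restricted to representables, and (3) the isomorphism $\sN_{I}\circ T\cong\overline{T}\circ\sN_{I}$ (which holds automatically since $\sN_{I}$ is fully faithful) identifies that Kleisli category with $\sfA_{T}$. That is the entire argument; the detour through $\sfE^{T}$, Dubuc's adjoint triangle, and essential surjectivity of a comparison functor proves something adjacent rather than the stated equivalence, and leaning on the Berger--Mellies--Weber nerve theorem is uncomfortably circular, since that theorem sits downstream of lemmas of exactly this kind.

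The converse is where there is a genuine gap. You must pass from the bare hypothesis that $\wh{\sfA}^{\overline{T}}\liso\wh{\sfA_{T}}$ to the condition \textbf{CWA1}, and your text offers only ``unwinding what this says at the level of the square forces the comparison maps to be isomorphisms,'' followed by a reorganization into steps (i) and (ii) in which step (i), read in the direction you need, is precisely the unproved claim. The missing pivot is twofold. First, an Eilenberg--Moore category $\wh{\sfA}^{\overline{T}}$ over a presheaf category is itself a presheaf topos precisely when $\overline{T}$ is cocontinuous; this is how cocontinuity of $\overline{T}$ is extracted from the hypothesis. Second, granting cocontinuity one has $\overline{T}=\Lan_{\Yo_{\sfA}}\left(\overline{T}\circ\Yo_{\sfA}\right)$, and the computation
\[
\Lan_{\Yo_{\sfA}}\left(\overline{T}\circ\Yo_{\sfA}\right)=\Lan_{\Yo_{\sfA}}\left(\overline{T}\circ\sN_{I}\circ I\right)=\Lan_{\Yo_{\sfA}}\left(\sN_{I}\circ T\circ I\right)
\]
(using $\Yo_{\sfA}=\sN_{I}\circ I$ and $\overline{T}\circ\sN_{I}\cong\sN_{I}\circ T$) exhibits $\overline{T}$ as exactly the left Kan extension demanded by \textbf{CWA1}, i.e.\ shows $T$ respects the arities $\sfA$. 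Without the cocontinuity step and this identification the converse does not go through.
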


\begin{proof}
Suppose $T\circlearrowright\left(\sfA,\sfE,I\right)$. Since $\overline{T}$
is co-continuous $\wh{\fA}^{\overline{T}}=\wh{\sfA_{\overline{T}}}$
and then, since $\sN_{I}\circ T=\overline{T}\circ\sN_{I}$, $\sfA_{\overline{T}}=\sfA_{T}$.
Conversely, suppose $\wh{\sfA}^{\overline{T}}=\wh{\sfA_{T}}$ with
$\overline{T}=\Lan_{\sN_{I}}\left(\sN_{I}\circ T\right)$. Since $\wh{\sfA}^{\overline{T}}$
is a presheaf topos precisely if $\overline{T}$ is co-continuous
it follows that $\overline{T}=\Lan_{\Yo_{\sfA}}\left(\overline{T}\circ\Yo_{\sfA}\right)$.
But then 
\begin{eqnarray*}
\overline{T} & = & \Lan_{\Yo_{\sfA}}\left(\overline{T}\circ\Yo_{\sfA}\right)\\
 & = & \Lan_{\Yo_{\sfA}}\left(\overline{T}\circ\sN_{I}\circ I\right)\\
 & = & \Lan_{\Yo_{\sfA}}\left(\Lan_{\sN_{I}}\left(\sN_{I}\circ T\right)\circ\sN_{I}\circ I\right)\\
 & = & \Lan_{\Yo_{\sfA}}\left(\sN_{I}\circ T\circ I\right)
\end{eqnarray*}
 which, according \textbf{CWA1},\textbf{ }is to say that $T$ preserves
arities $\sfA$.
\end{proof}
\begin{rem}
Of the most useful and interesting aspects of the $2$-category $\CwA$
is that not only does it have Eilenberg-Moore objects for all monads,
but that the Eilenberg-Moore object of a monad is computable by way
of the bijective-on-objects full-and-faithful factorization system.
We will see this arising perhaps a bit less mysteriously from a limit
reflection result later on.
\end{rem}

\begin{example}
The monad $T$ on $\wh{\bG}$ whose algebras are strict-$\omega$-categories
is an arities preserving monad for $\mathbf{P}\longrightarrow\mathbf{G}$.
Moreover $\Theta\longrightarrow\StrCat$ is the Eilenberg-Moore object
for that monad in $\CwA$.
\end{example}

\section{From Arities to Sketches}

As described in \cite{BourkeGarner} the notion of density presentation,
due to Day, can be used to sharpen the abstract nerve theorm of Berger,
Mellies and Weber.
\begin{defn}
A \textbf{set of colimits $\Psi$ in a category $\sf C$ }is a set
of diagrams 
\[
\left\{ D_{j};I_{j}\longrightarrow\sf C\right\} _{j\in J}
\]
 for whichthe colimit $\colim D_{i}$ exists for all $j\in J$.

Given a full-and-faithful functor $K:\sfC\longrightarrow\sf A$ and
a diagram $D:I\longrightarrow\sfC$ with colimit $\colim D$, we say
that the \textbf{colimit of $D$ is} \textbf{$K$-absolute }if the
nerve $\sN_{K}:\sf A\longrightarrow\wh{\sfC}$ preserves that colimit,
i.e. if the canonical comparison map $\colim\sN_{K}D\liso\sN_{K}\left(\colim D\right)$
is an isomorphism.

Given a set of colimits $\Psi$ in a category $\sfC$ and a full-and-faithful
functor $K:\sfC\longrightarrow\sfA$, if $\sfA$ is the closure of
$\sfC$ under a set of $K$-absolute colimits then $\Psi$ is said
to be a \textbf{density presentation} for $K$.
\end{defn}

\begin{example}
The set of wide spans
\[
\left\{ \vcenter{\vbox{\xyR{1.5pc}\xyC{1.5pc}\xymatrix{\overline{n_{0}} &  & \overline{n_{1}} &  & \overline{n_{\ell-1}} &  & \overline{n_{\ell}}\\
 & \overline{m_{1}}\ar[ul]|-{t^{n_{0}-m_{1}}}\ar[ur]|-{s^{n_{1}-m_{1}}} &  & \cdots\ar[ur]\ar[ul] &  & \overline{m_{\ell-1}}\ar[ul]|-{t^{n_{\ell-1}-m_{\ell-1}}}\ar[ur]|-{s^{n_{\ell}-m_{\ell-1}}}
}
}}\right\} 
\]
indexed by the set of tuples of integers
\[
\left\{ \sd{\left(n_{0},m_{1},\dots,m_{\ell-1},n_{\ell}\right)}\ell\in\N,\ n_{0}\geq m_{1}\leq n_{1}\geq\dots\leq n_{\ell-1}\geq m_{\ell-1}\leq n_{\ell}\in\N\right\} 
\]
constitute a density presentation for the functor $\bG\longrightarrow\mathbf{P}$
which induced the functor $\mathbf{P}\longrightarrow\wh{\bG}$.
\end{example}

\begin{prop}
\label{prop:bourke-garner-density-pres-to-regulus-machine}Given a
density presentation 
\[
\Psi=\left\{ D_{\ell}:I_{\ell}\longrightarrow\sf A\right\} _{\ell\in L}
\]
 for a full-and-faithful functor $\sf C\longrightarrow\sf A$, if
$M$ is a monad with arities for $J$ as a $0$-cell of $\CwA$, so
we have a diagram
\[
\xyR{1.5pc}\xyC{1.5pc}\xymatrix{ & \sf A_{M}\ar[r]^{J_{M}} & \wh{\sf C}^{M}\ar[r]^{\sN_{J_{M}}}\ar@<1ex>[d] & \wh{\sf A_{M}}\ar@<1ex>[d]\\
\sf C\ar[r] & \sf A\ar[r]_{J}\ar[u] & \wh{\sf C}\ar[r]_{\sN_{J}}\ar@<1ex>[u]^{F^{M}} & \wh{\sf A}\ar@<1ex>[u]^{F^{M}}
}
\]
then a presheaf $X:\sf A_{M}^{\op}\longrightarrow\S$ is the nerve
of an $M$-algbra if and only if $X$ is right orthogonal to the set
of monomorphisms 
\[
\sf V_{\Psi}=\left\{ \colim\sN_{J_{M}}F^{M}JD_{\ell}\longrightarrow\sN_{J_{M}}\colim F^{M}JD_{\ell}\right\} 
\]
\end{prop}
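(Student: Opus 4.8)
The plan is to combine the abstract nerve theorem of \cite{BergerMelliesWeber} with the density-presentation technology of \cite{BourkeGarner}, reducing everything to the statement that $\colim$-preservation can be tested against the image of a density presentation. First I would recall the abstract nerve theorem in the form used here: since $M$ is a monad with arities $J$ on $\sf C$ (as a $0$-cell of $\CwA$), the nerve $\sN_{J_M}:\wh{\sf C}^M\longrightarrow\wh{\sf A_M}$ is fully faithful, and its essential image consists of precisely those presheaves $X:\sf A_M^{\op}\longrightarrow\S$ whose restriction along $\sf A\longrightarrow\sf A_M$ lies in the essential image of $\sN_J:\wh{\sf C}\longrightarrow\wh{\sf A}$ — equivalently, of those $X$ sending the ``arity cocones'' in $\sf A_M$ to limit cones in $\S$. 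So the theorem identifies $M$-algebra nerves with a certain orthogonality (more precisely, limit-sketch) class, and the content of the Proposition is to replace this a priori large class of cocones by the explicit set $\sf V_\Psi$.

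The key step is to unwind what a density presentation $\Psi=\{D_\ell:I_\ell\longrightarrow\sf A\}_{\ell\in L}$ for $\sf C\longrightarrow\sf A$ gives us. By definition $\sf A$ is the closure of $\sf C$ under the $J$-absolute colimits $\colim D_\ell$; in the terminology of \cite{BourkeGarner} this means exactly that a presheaf $Y:\sf A^{\op}\longrightarrow\S$ lies in the essential image of $\sN_J$ if and only if $Y$ is right orthogonal to the set of comparison maps $\{\colim\sN_J D_\ell\longrightarrow\sN_J\colim D_\ell\}_{\ell\in L}$. (These comparison maps are monomorphisms because each $D_\ell$-colimit is a colimit of representables computed in $\wh{\sf C}$.) Then I would transport this along the square in the statement: because $F^M$ is left adjoint and $J_M$, $J$ are the horizontal comparison functors, the diagram $F^M J D_\ell$ in $\wh{\sf C}^M$ has a colimit, and one checks, using that $F^M$ and the nerves in the square commute up to the canonical mate, that the ``$M$-absolute'' comparison map $\colim\sN_{J_M}F^M J D_\ell\longrightarrow\sN_{J_M}\colim F^M J D_\ell$ restricts along $\sf A\longrightarrow\sf A_M$ to (a copy of) the original comparison map $\colim\sN_J D_\ell\longrightarrow\sN_J\colim D_\ell$. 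Hence $X$ is right orthogonal to $\sf V_\Psi$ iff $X|_{\sf A}$ is right orthogonal to the density-presentation maps iff $X|_{\sf A}$ is a $\sN_J$-nerve iff (by the abstract nerve theorem) $X$ is a $\sN_{J_M}$-nerve of an $M$-algebra.

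The main obstacle I anticipate is the bookkeeping in the second half: verifying that the ``lifted'' comparison maps $\sf V_\Psi$ really do detect the same orthogonality as the base-level density presentation — i.e. that restriction along $\sf A\longrightarrow\sf A_M$ reflects right orthogonality against this particular set, and that no extra conditions beyond $\sf V_\Psi$ are needed. This amounts to showing that $\sf A_M$ is itself the closure of $\sf C^M:=\wh{\sf C}^M$-representables under the $M$-images of the $J$-absolute colimits, which is where the compatibility of the adjunction $F^M\dashv U^M$ with the density presentation (one of the hypotheses packaged into ``$M$ has arities'') does the real work; the rest is diagram-chasing with mates. Everything else — fully-faithfulness of the nerves, the mono-ness of the comparison maps, the equivalence between the sketch condition and orthogonality against $\sf V_\Psi$ — is routine once this compatibility is in hand.
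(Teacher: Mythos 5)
Your argument is correct in outline, but you should know that the paper itself gives no argument here at all: its entire proof is the single line ``This follows from Theorem 36 of \cite{BourkeGarner}.'' What you have written is, in effect, a reconstruction of the proof of that cited theorem, assembled from the two ingredients the paper does make explicit elsewhere: the abstract nerve theorem of \cite{BergerMelliesWeber} (full faithfulness of $\sN_{J_M}$, with essential image the presheaves whose restriction along the identity-on-objects functor $\sf A\longrightarrow\sf A_M$ is a $\sN_J$-nerve), and Day's characterization of the essential image of $\sN_J$ as the right orthogonality class of the comparison maps of a density presentation. Two points in your sketch deserve comment. First, the ``transport'' step you flag as the main obstacle is easier than you fear: since $\sf A\longrightarrow\sf A_M$ is identity on objects and each $\sN_{J_M}F^{M}J(d)$ is the representable on the corresponding object of $\sf A_M$, applying $\wh{\sf A_M}\left(\_,X\right)$ to a map in $\sf V_{\Psi}$ computes literally the same comparison $X\left(\colim D_{\ell}\right)\longrightarrow\llim X\left(D_{\ell}\right)$ as applying $\wh{\sf A}\left(\_,X|_{\sf A}\right)$ to the base-level comparison map; no separate reflection-of-orthogonality lemma is needed beyond the observation (which is where the arities hypothesis genuinely works) that $F^{M}$ carries $\colim D_{\ell}$ to $\colim F^{M}JD_{\ell}$ and that these colimits are $J_M$-absolute enough for $\sN_{J_M}\colim F^{M}JD_{\ell}$ to be representable. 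Second, your parenthetical claim that the comparison maps are monomorphisms ``because each colimit is a colimit of representables'' is not a proof --- canonical maps $\colim\Yo D\longrightarrow\Yo\left(\colim D\right)$ need not be monic in general --- but nothing in the orthogonality statement depends on monicity, so this is cosmetic. On balance: the paper's citation is economical but opaque, while your version makes visible exactly where the arities hypothesis and the density presentation each do their work, which is worth having.
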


\begin{proof}
This follows from Theorem 36 of \cite{BourkeGarner}.
\end{proof}
\begin{rem}
In the case where the nerves of the colimits $\sN_{J_{M}}\colim F^{M}D_{\ell}$
are representable by $T_{\ell}$ we will denote the colimit $\colim\sN_{J_{M}}F^{M}D_{\ell}$
by $V\left[T_{\ell}\right]$ for reasons which will become clear.
\end{rem}

\begin{example}
Recall that the set of wide spans 
\[
\Psi=\left\{ \vcenter{\vbox{\xyR{1.5pc}\xyC{1.5pc}\xymatrix{\overline{n_{0}} &  & \overline{n_{1}} &  & \overline{n_{\ell-1}} &  & \overline{n_{\ell}}\\
 & \overline{m_{1}}\ar[ul]|-{t^{n_{0}-m_{1}}}\ar[ur]|-{s^{n_{1}-m_{1}}} &  & \cdots\ar[ur]\ar[ul] &  & \overline{m_{\ell-1}}\ar[ul]|-{t^{n_{\ell-1}-m_{\ell-1}}}\ar[ur]|-{s^{n_{\ell}-m_{\ell-1}}}
}
}}\right\} 
\]
 constitute a density presentation for the inclusion $\bG\longrightarrow\mathbf{P}$.
The set $\sfV_{\Psi}$ is the set of spine inclusions
\[
\sf V=\left\{ V\left[T\right]\longrightarrow\Theta\left[T\right]\right\} 
\]
It follows then, from the proposition above, that a cellular set $X$
is the nerve of a strict $\omega$-category precisely if it is right
orthogonal to this set $\sf V$. Right orthogonality with respect
to the spines is of course the familiar Segal condition.
\end{example}

We can use this proposition this to provide a more abstract proof
of Berger's cellular nerve criterion in terms of inner horns.
\begin{example}
\label{exa:Inner-horns-by-density-presentation} Let $\kappa:S\longrightarrow T$
be a hyperface of a cell $T$ of $\Theta$. We recall that $\Lambda^{\kappa}\left[T\right]$
is the union in $\Theta\left[T\right]$ of all hyperfaces of $T$
which are not the hyperface $\kappa$. Each presheaf $\Lambda^{\kappa}\left[T\right]$
is therefore a co-equalizer. Let $F_{\left(\kappa,T\right)}$ be the
set of hyperfaces of $T$ which are not the hyperface $\kappa$ and
let $I_{\left(\kappa,T\right)}$ be the set of representables including
into intersections if hyperfaces in $F_{\left(\kappa,T\right)}$.
\begin{rem}
The strange wording regarding the definition of $I_{\left(\kappa,T\right)}$
here is chosen to accomodate the fact that hyperfaces in $\Theta$
may intersect as coproducts of representables and not always representables
as in the case of $\t$. Consider for example the intersection 
\[
\left[1\right];d^{0}\cap\left[1\right];d^{1}\longrightarrow\left[1\right];\left[1\right]
\]
 which computes to the coproduct of representable presheaves $\Theta\left[0\right]\coprod\Theta\left[0\right]$.
\end{rem}

Given any hyperface $\kappa:S\longrightarrow T$ of cell $T$ of $\Theta$
it follows that the canonical map 
\[
\colim\left\{ \prod_{R^{\p}\in I_{\left(\kappa,T\right)}}\Theta\left[R^{\p}\right]\rightrightarrows\prod_{R\in F_{\left(\kappa,T\right)}}\Theta\left[R\right]\right\} \longrightarrow\Theta\left[T\right]
\]
 is an isomorphism.

The set of all of these pairs of parralel arrows comprise a density
presentation, call it $\Upsilon$, for the identity $\Theta\longrightarrow\Theta$.
We will use this density presentation to provide an alternative proof
of Berger's cellular nerve theorem as follows. Let $\overline{T}$
be the obvious left Kan extension of the monad $T\circlearrowright\wh{\bG}$
whose algebras are strict $\omega$-categories. Observe that $\Theta\longrightarrow\wh{\Theta}^{\overline{T}}$
is the Eilenberg-Moore object for the arities preserving monad $\overline{T}$.
Thus Proposition \ref{prop:bourke-garner-density-pres-to-regulus-machine}
applies and a cellular set is the nerve of a strict $\omega$-category
precisely if it is right orthogonal to $\sf V_{\Upsilon}$ and the
set $\sf{V_{\Upsilon}}$ is the set $\Lambda_{\sf{inner}}$ by design.
\end{example}

\section{$\protect\Cat$-enriched Relative Adjunctions between $\protect\Cat\protect\hookrightarrow\protect\CAT$
and $\protect\Prof$, and $\protect\CwA$}

\subsection{$\protect\Cat$-enriched relative adjunctions}

In this short subsection we recall the definition of $\Cat$-enriched
relative adjunctions and recall the associated limit and colimit preserving
properties of relative adjoint functors.
\begin{defn}
Given a (not necessarily commuting) triangle of $2$-functors 
\[
\xyR{3pc}\xyC{3pc}\xymatrix{\sfA\ar[dr]^{G}\ar[d]_{J}\\
\sfC & \sfB\ar[l]^{D}
}
\]
if there exists some natural equivalence of categories
\[
\sfC\left(J\left(\_\right),D\left(\_\right)\right)\liso\sfD\left(G\left(\_\right),\_\right)
\]
we say that \textbf{$G$ }is\textbf{ left adjoint to $D$ relative
to $J$.}
\end{defn}

\begin{lem}
Given \textbf{$G$ }left adjoint to $D$ relative to $J$ as in the
definition above:
\end{lem}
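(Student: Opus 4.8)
The plan is to read both preservation statements off the defining equivalence $\sfC(J(-),D(-))\simeq\sfB(G(-),-)$ by the usual device of transporting a (weighted) (co)limit across a hom-isomorphism and then invoking the enriched (co)Yoneda lemma, exactly as one proves that ordinary left adjoints preserve colimits and right adjoints preserve limits, but carrying $J$ along. Since the only new ingredient over the classical case is the presence of $J$, I expect everything to be formal except for one point in the limit direction.

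For the colimit statement I would fix a $\Cat$-weighted diagram: a small $\Cat$-category $\cI$, a weight $W$, and a functor $F\colon\cI\to\sfA$ whose weighted colimit $C=\colim^{W}F$ exists in $\sfA$ and is preserved by $J$. For every $b\in\sfB$ I would compute
\begin{align*}
\sfB(GC,b) &\simeq \sfC(JC,Db)\simeq \sfC\bigl(\colim^{W}(JF),Db\bigr)\\
 &\simeq \{W,\sfC(JF,Db)\}\simeq \{W,\sfB(GF,b)\}\simeq \sfB\bigl(\colim^{W}(GF),b\bigr),
\end{align*}
applying in turn the relative adjunction, preservation of the colimit by $J$, the universal property of weighted colimits, the relative adjunction objectwise over $\cI$, and the universal property of the weighted colimit in $\sfB$ (the last object exists because the displayed composite corepresents it). All these equivalences are natural in $b$, so the enriched Yoneda lemma gives $GC\simeq\colim^{W}(GF)$, which is the claim that $G$ preserves that colimit.

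Dually, for the limit statement I would take $F\colon\cI\to\sfB$ with weighted limit $L=\{W,F\}$ in $\sfB$, and for each $a\in\sfA$ compute
\begin{align*}
\sfC(Ja,DL) &\simeq \sfB(Ga,L)\simeq \{W,\sfB(Ga,F)\}\\
 &\simeq \{W,\sfC(Ja,DF)\}\simeq \sfC\bigl(Ja,\{W,DF\}\bigr),
\end{align*}
the last step assuming $\{W,DF\}$ exists in $\sfC$ (automatic whenever $\sfC$ is complete enough — in all the cases at hand $\sfC$ is $\Prof$ or a presheaf category). This produces an equivalence $\sfC(J(-),DL)\simeq\sfC(J(-),\{W,DF\})$ of $\Cat$-valued presheaves on $\sfA$, and to descend from it to $DL\simeq\{W,DF\}$ in $\sfC$ I would invoke density of $J$, i.e. full faithfulness of the nerve $\sN_{J}\colon\sfC\to\wh{\sfA}$, which holds for every relative adjunction occurring later in the paper.

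The main obstacle — essentially the only non-routine step — is exactly this last descent: the colimit half needs nothing beyond the hom-equivalence and Yoneda, whereas the limit half genuinely uses density of $J$ (without it a relative right adjoint need not preserve any limits: e.g. over an empty $\sfA$ the defining equivalence is vacuous and $D$ is arbitrary). I would therefore either carry density of $J$ as a standing hypothesis of the lemma or phrase the limit clause as ``$D$ preserves every weighted limit seen by $J$''. A secondary, purely bookkeeping, point: in the $\Cat$-enriched setting the displayed equivalences are equivalences of categories rather than isomorphisms, so ``weighted (co)limit'' should be read up to equivalence (bilimit / bicolimit) and the (co)Yoneda step in its bicategorical form; this is harmless because the $2$-categories in play ($\Cat$, $\CAT$, $\Prof$, $\CwA$) and all the weights are strict.
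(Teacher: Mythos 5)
Your proposal is correct and follows essentially the same route as the paper: both directions are read off the defining equivalence $\sfC\left(J\left(\_\right),D\left(\_\right)\right)\simeq\sfB\left(G\left(\_\right),\_\right)$ by transporting the weighted (co)limit through the hom-equivalences and invoking (co)Yoneda, with density of $J$ supplying the final descent in the limit case exactly as the paper's second bullet already stipulates. Your observation that the limit half is the only place where density is genuinely needed matches the paper's footnote citing Ulmer that density is sufficient but not necessary there.
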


\begin{itemize}
\item $G$ preserves all weighted colimits that $J$ does; and
\item $D$ preserves all weighted limits if $J$ is dense\footnote{As observed in footnote 13 of \cite{Ulmer}, this is certainly not
necessary, but is at least sufficient.}.
\end{itemize}
\begin{proof}
Suppose $X:\sfJ\longrightarrow\sfA$ to be a $2$-functor and $W:\sfJ\longrightarrow\Cat$
to be a weight. Then, letting $\colim^{W}X$ denote the $W$-weighted
colimit of $X$, we see that if $J$ preserves $\colim^{W}X$ then,
for each $n\in\sfB$ we've the following sequence of equivalences
of categories.
\begin{eqnarray*}
\sfB\left(G\left(\colim^{W}X\right),n\right) & \liso & \sfC\left(J\left(\colim^{W}X\right),D\left(n\right)\right)\\
 & \liso & \sfC\left(\colim^{W}JX,D\left(n\right)\right)\\
 & \liso & \left[\sfJ,\Cat\right]\left(W,\sfC\left(J\circ X\left(\_\right),D\left(n\right)\right)\right)\\
 & \liso & \left[\sfJ,\Cat\right]\left(W,\sfB\left(G\circ X\left(\_\right),n\right)\right)\\
 & \liso & \sfB\left(\colim^{W}GX,n\right)
\end{eqnarray*}
Since the equivalences are natural in $n$ and vary over all of $\sfB$,
this provides an isomorphism
\[
G\left(\colim^{W}X\right)\liso\colim^{W}GX
\]
.

The similar argument for the weighted limit preservation arrives at
equivalences of categories 
\begin{eqnarray*}
\sfC\left(J\left(a\right),D\left(\llim^{W}X\right)\right) & \liso & \sfC\left(J\left(A\right),\llim^{W}DX\right)
\end{eqnarray*}
which are natural in $a\in\sfA$. In the context of our assumption
that $J$ is dense, it follows that $D\left(\llim^{W}X\right)\liso\llim^{W}DX$.
\end{proof}

\subsection{$\protect\Cat\protect\hookrightarrow\protect\CAT$-relative adjunctions}
\begin{lem}
The functor $\Prof\longrightarrow\CAT$ is $\Cat\hookrightarrow\CAT$-right
adjoint to $\Cat\rightarrow\Prof$.
\[
\xyR{3pc}\xyC{3pc}\xymatrix{\Cat\ar[dr]\ar[d]\\
\CAT & \Prof\ar[l]
}
\]
\end{lem}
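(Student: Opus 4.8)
The plan is to make the functor $D:\Prof\to\CAT$ explicit and then recognise the asserted relative adjunction as the universal property of the Yoneda embedding, phrased relative to $J:\Cat\hookrightarrow\CAT$. Up to the standard biequivalence of $\Prof$ with the strict $2$-category of presheaf categories, cocontinuous functors and natural transformations, $D$ carries a small category $B$ to $\wh{B}=\left[B^{\op},\S\right]$ and a profunctor $Q:B\nrightarrow B'$ to the cocontinuous functor $\wh{B}\to\wh{B'}$ it classifies, while $G:\Cat\to\Prof$ is the identity on objects and sends $F:A\to B$ to the representable profunctor $B(\_,F\_)$ — equivalently, after the biequivalence, to $\Lan_{F}:\wh{A}\to\wh{B}$. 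Since $G$ is bijective on objects, $\Prof(GA,B)=\Prof(A,B)$, so what must be produced is, for $A$ a small category and $B$ an object of $\Prof$, an equivalence
\[
\CAT\!\left(JA,DB\right)=\CAT\!\left(A,\wh{B}\right)\;\liso\;\Prof\!\left(A,B\right)
\]
that is $2$-natural in $A$ and in $B$; the candidate unit of the relative adjunction is then the family of Yoneda embeddings $\Yo_{A}:A\hookrightarrow\wh{A}=DGA$.

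First I would exhibit the equivalence. Elementarily it is currying: a functor $A\to\left[B^{\op},\S\right]$ is the same thing as a functor $B^{\op}\times A\to\S$, i.e.\ a profunctor $A\nrightarrow B$, and this identification of hom-categories is in fact an isomorphism. Conceptually it is the universal property of $\Yo_{A}$: a functor $H:A\to\wh{B}$ extends, uniquely up to unique isomorphism, to a cocontinuous functor $\wh{A}\to\wh{B}$ by $\Lan_{\Yo_{A}}H$, with inverse restriction along $\Yo_{A}$, and cocontinuous functors $\wh{A}\to\wh{B}$ are exactly the profunctors $A\nrightarrow B$. I would take this free-cocompletion property of $\Yo$ as known; with it the whole lemma becomes the statement that $\Yo$ exhibits $\wh{(\_)}$ as the free cocompletion, now read as a relative left adjoint.

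The one step requiring care, and what I expect to be the main obstacle, is $2$-naturality of this equivalence, above all in the $\Prof$-variable. Naturality in $A$ is routine: for $H:A'\to A$ in $\Cat$, precomposition on the left-hand side is restriction of presheaf-valued functors along $H$, while on the right-hand side it is composition in $\Prof$ with the representable profunctor $A(\_,H\_)$, which by the co-Yoneda lemma is again that restriction. For the $\Prof$-variable, given a profunctor $Q:B\nrightarrow B'$ one must check that postcomposition with $D(Q):\wh{B}\to\wh{B'}$ corresponds to the profunctor composite $Q\circ(\_)$; unwound, this is the coend identity $(Q\circ P)(\_,a)\liso D(Q)\bigl(P(\_,a)\bigr)$, natural in $a$, which is immediate from the definition of $D(Q)$ and Fubini for coends — and in the conceptual form it is automatic, since $D(Q)\circ\Lan_{\Yo_{A}}H$ is cocontinuous and restricts along $\Yo_{A}$ to $D(Q)\circ H$, hence equals $\Lan_{\Yo_{A}}(D(Q)\circ H)$ by uniqueness of cocontinuous extensions. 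Once the displayed equivalence is seen to be $2$-natural in $A\in\Cat$ and $B\in\Prof$, the definition of a $\Cat$-enriched relative adjunction gives that $\Prof\to\CAT$ is right adjoint to $\Cat\to\Prof$ relative to $\Cat\hookrightarrow\CAT$, with unit the Yoneda embeddings.
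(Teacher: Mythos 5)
Your proposal is correct and follows essentially the same route as the paper, whose entire proof is the one-line observation of the natural isomorphism of $\Hom$-categories $\Prof\left(\wh{\sfA},\wh{\sfB}\right)\liso\CAT\left(\sfA,\wh{\sfB}\right)$. You have simply unpacked what the paper leaves implicit: that this isomorphism is currying (equivalently the free-cocompletion property of the Yoneda embedding, which furnishes the unit) together with the $2$-naturality checks in both variables.
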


\begin{proof}
See that we've a natural isomorphism of $\Hom$-categories 
\[
\Prof\left(\wh{\sfA},\wh{\sfB}\right)\liso\CAT\left(\sfA,\wh{\sfB}\right)
\]
\end{proof}
\begin{lem}
The functor
\[
\xyR{0pc}\xyC{3pc}\xymatrix{\Cat\ar[r] & \CwA\\
\sfB\ar@{|->}[r] & \left(\sfB,\sfB,\id_{\sfB}\right)
}
\]
is left $\Cat\hookrightarrow\CAT$ adjoint to the functor 
\[
\xyR{0pc}\xyC{5pc}\xymatrix{\CwA\ar[r] & \mathsf{CAT}\\
\left(\fA,\fE,I\right)\ar@{|->}[r] & \sfE
}
\]
\end{lem}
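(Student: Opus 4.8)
The plan is to unwind the definition of a $\Cat\hookrightarrow\CAT$-relative adjunction directly, i.e.\ to produce a natural equivalence of hom-categories
\[
\CAT(\sfB,\fE)\;\liso\;\CwA\bigl((\sfB,\sfB,\id_{\sfB}),(\sfA,\fE,I)\bigr)
\]
natural in $\sfB\in\Cat$ and in $(\sfA,\fE,I)\in\CwA$, and then quote the definition. Two preliminary points are immediate: $(\sfB,\sfB,\id_{\sfB})$ is a $0$-cell of $\CwA$, since $\sfB$ small is in particular locally small and the identity inclusion is a small full subcategory which is dense (its nerve is the Yoneda embedding, hence full and faithful); and the assignment $(\sfA,\fE,I)\mapsto\fE$ underlies a $2$-functor $\CwA\to\CAT$, because by \textbf{CWA1} the composite of arities-respecting functors has the composite of underlying functors as its underlying functor, and by \textbf{CWA2} the $2$-cells of $\CwA$ are already arbitrary natural transformations of underlying functors.

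The heart of the proof is the observation that, with source $(\sfB,\sfB,\id_{\sfB})$, the arities-respecting condition of \textbf{CWA1} is vacuous. A $1$-cell $(\sfB,\sfB,\id_{\sfB})\to(\sfA,\fE,I)$ is a functor $K\colon\sfB\to\fE$ for which the square
\[
\xyR{2.5pc}\xyC{2.5pc}\xymatrix{\sfB\ar[r]^{\sN_{\id_{\sfB}}}\ar[d]_{K} & \wh{\sfB}\ar[d]^{\overline{K}}\\ \fE\ar[r]_{\sN_{I}} & \wh{\sfA}}
\]
pseudo-commutes, where by the formula in \textbf{CWA1} one has $\overline{K}=\Lan_{\Yo_{\sfB}}(\sN_{I}\circ K\circ\id_{\sfB})=\Lan_{\Yo_{\sfB}}(\sN_{I}\circ K)$. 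Now $\sN_{\id_{\sfB}}$ is exactly the Yoneda embedding $\Yo_{\sfB}$, and since $\Yo_{\sfB}$ is full and faithful and $\wh{\sfA}$ is cocomplete, the unit of this left Kan extension is invertible; hence the canonical comparison $2$-cell $\sN_{I}\circ K\Rightarrow\overline{K}\circ\Yo_{\sfB}$ is an isomorphism and the square pseudo-commutes for every $K$. Thus the $1$-cells $(\sfB,\sfB,\id_{\sfB})\to(\sfA,\fE,I)$ are precisely the functors $\sfB\to\fE$, and by \textbf{CWA2} the $2$-cells between two such are precisely the natural transformations of the underlying functors. This gives, on the nose, an isomorphism of categories $\CwA((\sfB,\sfB,\id_{\sfB}),(\sfA,\fE,I))\cong\CAT(\sfB,\fE)$.

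Finally I would check naturality, which is routine: $G$ sends $P\colon\sfB'\to\sfB$ to $P$ itself, regarded as a $1$-cell $(\sfB',\sfB',\id)\to(\sfB,\sfB,\id)$, so precomposition by $G(P)$ corresponds under the above identification to precomposition by $J(P)$; and postcomposition by a $1$-cell of $\CwA$ out of $(\sfA,\fE,I)$ corresponds to postcomposition by its underlying functor, i.e.\ by its image under the forgetful $2$-functor $\CwA\to\CAT$; compatibility with $2$-cells is automatic, every $2$-cell in sight being a plain natural transformation. Assembling this yields the natural equivalence $\CAT(J(\_),D(\_))\liso\CwA(G(\_),\_)$, which is exactly the assertion that $G$ is left adjoint to $(\sfA,\fE,I)\mapsto\fE$ relative to $J\colon\Cat\hookrightarrow\CAT$. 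The only step that requires more than bookkeeping with \textbf{CWA0}--\textbf{CWA2} is the invertibility of the comparison $2$-cell $\sN_{I}\circ K\Rightarrow\overline{K}\circ\Yo_{\sfB}$ — the standard fact that a cocontinuous left Kan extension along a dense full-and-faithful functor restricts back, up to canonical isomorphism, to the functor it extends — and that is where I expect the (mild) content of the argument to sit.
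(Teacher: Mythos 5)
Your proposal is correct and follows essentially the same route as the paper: the paper's (very terse) proof just asserts the equivalence between the category of pseudo-commuting squares out of $\left(\sfB,\sfB,\id_{\sfB}\right)$ and the category of functors $\sfB\rightarrow\sfE$, and your argument supplies exactly the missing justification, namely that $\sN_{\id_{\sfB}}=\Yo_{\sfB}$ is full and faithful so the comparison $2$-cell $\sN_{I}\circ K\Rightarrow\overline{K}\circ\Yo_{\sfB}$ is invertible and the arities-respecting condition is vacuous. The naturality check you add is routine but worth having.
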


\begin{proof}
See that 
\[
\CwA\left(\left(\sfB,\sfB,\id_{\sfB}\right),\left(\sfA,\sfE,I\right)\right)\liso\mathsf{CAT}\left(\sfB,\sfE\right)
\]
as the categories (left side) of pseudo-commuting squares (with right
vertical map co-continuous) and (right side) arrows are equivalent.
\[
\left\{ \vcenter{\vbox{\xyR{3pc}\xyC{3pc}\xymatrix{\sfB\ar[r]\ar[d] & \wh{\sfB}\ar[d]^{\cc}\\
\fE\ar[r] & \wh{\fA}
}
}}\right\} \liso\left\{ \vcenter{\vbox{\xyR{3pc}\xyC{3pc}\xymatrix{\sfB\ar[d]\\
\sfE
}
}}\right\} 
\]
\end{proof}
In fact, there is a morphism of $\Cat\hookrightarrow\CAT$ adjunctions
between these two\footnote{Indeed, someone more sophisticated than this author will smell Yoneda
structures lurking behind the scenes as the non-identity $2$-cells
are the Yoneda embedding and the restricted Yoneda embedding respectively.}.\[
	\begin{tikzcd}
		& {\Cat } \\
		\\
		\CwA && {\Prof } \\
		& \CAT
		\arrow[""{name=0, anchor=center, inner sep=0}, from=3-1, to=4-2, crossing over]
		\arrow[from=3-3, to=4-2]
		\arrow[from=1-2, to=3-1]
		\arrow[""{name=1, anchor=center, inner sep=0}, from=1-2, to=4-2] 
		\arrow[from=1-2, to=3-3]
		\arrow[shorten <=5pt, shorten >=5pt, Rightarrow, from=1, to=3-3]
		\arrow[from=3-1, to=3-3, crossing over]
		\arrow[shorten <=9pt, shorten >=9pt, Rightarrow, from=0, to=3-3, crossing over]
	\end{tikzcd}
\]
\begin{rem}
\label{rem:limit-and-colimit-preservation-stuff-about-the-relative-adjunctions}
As consequences of these lemmata then, we find that:
\end{rem}

\begin{itemize}
\item $\Cat\longrightarrow\Prof$ and $\Cat\longrightarrow\CwA$ preserves
all colimits which the inclusion $\Cat\longrightarrow\CAT$ does;
and
\item since $\Cat\longrightarrow\CAT$ is dense, the functors $\Prof\longrightarrow\CAT$
and $\CwA\longrightarrow\CAT$ preserves all limits.
\end{itemize}

\section{A $3$-Oriental, a Comma Category, and the Limit Reflection of $\protect\CwA\protect\longrightarrow\protect\CAT^{\protect\bfTwo}$}

Now, it's pleasing to the eye to observe that the relative adjunctions
of the previous section assemble into a $3$-oriental suggestive of
a Yoneda structure. More, it's probable that some rumination upon
this will yield a formal theory of $\CwA$'s in an sufficiently endowed
$3$-category. However, our concern is more pedestrian.

The unit $2$-cell 
\[
\xyR{1.5pc}\xyC{1.5pc}\xymatrix{\Cat\ar[dr]\ar[dd]\\
\ar@{=>}[r] & \Prof\ar[dl]\\
\CAT
}
\]
induces a $2$-functor $\CwA\longrightarrow\CAT\downarrow\left(\Prof\rightarrow\CAT\right)$,
and it's not hard to see that this $2$-functor is full-and-faithful
- consider our second definition for $\CwA$. Full-and-faithful functors
however reflect all (weighted) limits and colimits; by way of this
fact, and the limit preservation properties of Remark \ref{rem:limit-and-colimit-preservation-stuff-about-the-relative-adjunctions},
we develop a criterion for the existence of some weighted limits in
$\CwA$.
\begin{prop}
The $2$-functor 
\[
\CwA\longrightarrow\CAT^{\bfTwo}
\]
reflects all limits.
\end{prop}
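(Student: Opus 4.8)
The plan is to factor the $2$-functor of the statement through the full-and-faithful $2$-functor $\iota\colon\CwA\longrightarrow\CAT\downarrow\left(\Prof\rightarrow\CAT\right)$ constructed in this section, and then to reduce the reflection claim to a single property of the ``underlying category'' $2$-functor $U\colon\Prof\longrightarrow\CAT$. Concretely, let $Q\colon\CAT\downarrow\left(\Prof\rightarrow\CAT\right)\longrightarrow\CAT^{\bfTwo}$ be the $2$-functor sending a datum $\left(\sfE\xrightarrow{f}U\sfA\right)$ to the underlying arrow $f$ of $\CAT$; that is, $Q$ simply forgets that the codomain of $f$ is presented as the presheaf category $\wh{\sfA}$, recording only $U$ applied to the $\Prof$-component of each $1$-cell (here $\CAT^{\bfTwo}$ is the arrow $2$-category whose $1$-cells are pseudo-commuting squares, so that $Q$ is well-defined, matching the pseudo-commutation in \textbf{CWA1}). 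Then $Q\circ\iota$ is exactly the $2$-functor $\left(\sfA,\sfE,I\right)\mapsto\left(\sN_{I}\colon\sfE\to\wh{\sfA}\right)$ of the statement. Since $\iota$ is full-and-faithful it reflects all weighted limits, so it suffices to prove that $Q$ reflects weighted limits.

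For that I would first record the two routine limit computations in play. Weighted limits in $\CAT^{\bfTwo}$ are computed pointwise, so the domain and codomain $2$-functors $\CAT^{\bfTwo}\longrightarrow\CAT$ both preserve them. And by the standard comma-category computation of limits, together with the fact (Remark \ref{rem:limit-and-colimit-preservation-stuff-about-the-relative-adjunctions}) that $U\colon\Prof\longrightarrow\CAT$ preserves all weighted limits, the projection $\CAT\downarrow\left(\Prof\rightarrow\CAT\right)\longrightarrow\CAT\times\Prof$ creates, hence reflects, all weighted limits: given a cone whose images in $\CAT$ and in $\Prof$ are both limit cones, the comparison map into the comma-vertex is forced, and its universal property follows from those of the two factors precisely because $U$ carries the $\Prof$-limit cone to a limit cone in $\CAT$.

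The heart of the matter is the claim that $U\colon\Prof\longrightarrow\CAT$ also \emph{reflects} weighted limits. Suppose $\left(V,\left\{ p_{j}\colon V\to B_{j}\right\} _{j}\right)$ is a $W$-cone in $\Prof$ whose $U$-image is a $W$-limit cone in $\CAT$. For a test object $X$, the comparison functor $\Prof\left(X,V\right)\longrightarrow\left\{ W,\Prof\left(X,B_{-}\right)\right\}$ sits, compatibly with the full-and-faithful inclusions $\Prof\left(-,-\right)\hookrightarrow\CAT\left(U-,U-\right)$ onto the subcategories of cocontinuous functors, inside the isomorphism $\CAT\left(UX,UV\right)\liso\left\{ W,\CAT\left(UX,UB_{-}\right)\right\}$; being a functor between full subcategories of one and the same category compatibly with the inclusions, it is full-and-faithful. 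It remains to see it is essentially surjective, i.e. that a functor $\phi\colon UX\to UV$ all of whose composites $Up_{j}\circ\phi$ are cocontinuous is itself cocontinuous, hence lies in $\Prof\left(X,V\right)$; this holds because each $Up_{j}$ is cocontinuous (the legs of a cone in $\Prof$ are profunctors) and because a limit cone in $\CAT$ is jointly conservative, so that the canonical comparison $\colim\phi F\to\phi\left(\colim F\right)$, being sent to an isomorphism by every $Up_{j}$, is an isomorphism.

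Assembling the pieces: let $c$ be a $W$-cone in $\CAT\downarrow\left(\Prof\rightarrow\CAT\right)$ with $Qc$ a $W$-limit cone in $\CAT^{\bfTwo}$. Its domain component is a $W$-limit cone in $\CAT$ directly; its codomain component is a $W$-limit cone in $\CAT$ which is the $U$-image of the $\Prof$-component of $c$ — whose legs are cocontinuous, being codomain parts of $1$-cells of $\CwA$ — so by the reflection property just proved the $\Prof$-component of $c$ is itself a limit cone. Hence the image of $c$ in $\CAT\times\Prof$ is a limit cone, so $c$ is a limit cone in the comma $2$-category, and then $\iota$-reflection makes the underlying datum in $\CwA$ a limit cone as well. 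The step I expect to be the main obstacle is precisely the reflection property of $U\colon\Prof\longrightarrow\CAT$: because $U$ is not full, one must check by hand that the comparison on hom-categories hits only cocontinuous functors, which is where the joint conservativity of $\CAT$-limit cones and the cocontinuity of the legs (built into the very definition of $\CwA$) do the work; a secondary point worth stating carefully is the harmless identification, inside $\Prof$, of categories with equivalent presheaf categories, so that recording $\wh{\sfA}$ rather than $\sfA$ loses nothing that $\Prof$ can detect.
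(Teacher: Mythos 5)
Your proof is correct, and its skeleton coincides with the paper's: factor through the full-and-faithful $\CwA\longrightarrow\CAT\downarrow\left(\Prof\rightarrow\CAT\right)$, reduce the second leg to the comma-category limit lemma, and thereby reduce everything to the behaviour of $\Prof\longrightarrow\CAT$. Where you genuinely diverge is in establishing that $\Prof\longrightarrow\CAT$ reflects limits. The paper factors it as $\Prof\longrightarrow\mathsf{CoComp}\CAT\longrightarrow\CAT$, gets reflection for the first leg from full-and-faithfulness, and gets creation for the second from monadicity for the small-presheaves monad; you instead argue directly that the local comparison $\Prof\left(X,V\right)\longrightarrow\left\{ W,\Prof\left(X,B_{-}\right)\right\} $ is full-and-faithful (as a map of full subcategories compatible with the isomorphism on underlying hom-categories) and essentially surjective, the latter because cocontinuity is detected by the jointly conservative family of evaluation legs of a weighted limit in $\CAT$, each of which is itself cocontinuous. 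Your route is more elementary and self-contained --- it avoids invoking the monadicity of the free-cocompletion, with its attendant size caveats --- at the cost of a hands-on computation; the paper's route is shorter but leans on that external input. Two small points of hygiene: the joint-conservativity claim should be stated for the evaluation $1$-cells $\mathsf{ev}_{j,w}$ indexed by objects $w$ of $W_{j}$ rather than for conical projections $p_{j}$ (your argument goes through verbatim once so phrased), and the compatibility of canonical colimit-comparison maps under composition, which you use to conclude that each leg sends the comparison for $\phi$ to an isomorphism, deserves an explicit sentence. Neither affects correctness.
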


\begin{proof}
As we've already observed, the $2$-functor
\[
\CwA\longrightarrow\CAT\downarrow\left(\Prof\rightarrow\CAT\right)
\]
reflects limits as it is a full-sub-$2$-category. As such, we will
show that 
\[
\CAT\downarrow\left(\Prof\rightarrow\CAT\right)\longrightarrow\CAT^{\bfTwo}
\]
creates limits, whence the composition $\CwA\longrightarrow\CAT^{\bfTwo}$
will reflect limits as well.

For the proof that $\CAT\downarrow\left(\Prof\rightarrow\CAT\right)\longrightarrow\CAT^{\bfTwo}$
creates limits we will apply Corollary \ref{cor:creativity-from-creativity}
and for that it suffices to show that $\Prof\longrightarrow\CAT$
creates limits. For the preservation part it is enough to recall that
$\Prof\longrightarrow\CAT$ is right adjoint relative to a dense functor.
For the reflectivity we observe that $\Prof\longrightarrow\CAT$ factors
as 
\[
\Prof\longrightarrow\mathsf{CoComp}\CAT\longrightarrow\CAT
\]
 and the first leg is a full sub-$2$-category, whence it reflects
all limits, and 
\[
\mathsf{CoCompCAT}\longrightarrow\CAT
\]
 creates all limits as it is monadic.\footnote{for the small presheaves monad}
\end{proof}
\begin{cor}
\label{thm:Prof-is-the-obstruction}Let
\[
\xyR{0pc}\xyC{3pc}\xymatrix{\left(\sfA,\sfE,I\right):J\ar[r] & \CwA\\
j\ar@{|->}[r] & \left(\sfA_{j},\sfE_{j},I_{j}\right)
}
\]
be a $2$-diagram and let $W:J\longrightarrow\Cat$ be a weight. Then
the weighted limit $\llim^{W}\left(\sfA,\sfE,I\right)$ in $\CwA$
exists if:
\begin{itemize}
\item the weighted limit $\llim^{W}\sfA$ exists in $\Prof$;
\item the weighed limit $\llim^{W}\sN_{I}$ taken in $\CAT^{\bfTwo}$ is
induced by a full subcategory
\[
\llim^{W}\sfA\longrightarrow\llim^{W}\sfE
\]
\end{itemize}
and in such case it is 
\[
\left(\llim_{\Prof}^{W}\sfA,\llim_{\CAT}^{W}\sfE,\llim^{W}\sfA\longrightarrow\llim^{W}\sfE\right)
\]
\end{cor}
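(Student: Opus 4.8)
The plan is to feed the two hypotheses into the preceding Proposition, using the factorisation appearing in its proof: $\CwA \to \CAT^{\bfTwo}$ is the full-and-faithful inclusion $\CwA \hookrightarrow \CAT \downarrow (\Prof \to \CAT)$ followed by $\CAT \downarrow (\Prof \to \CAT) \to \CAT^{\bfTwo}$, and the second of these creates limits (that step being where one invokes Corollary \ref{cor:creativity-from-creativity} and the fact that $\Prof \to \CAT$, as a relative right adjoint along a dense functor, both preserves and reflects limits). Since a full-and-faithful $2$-functor reflects limits, it suffices to construct a $W$-weighted cone in $\CwA$ over $(\sfA,\sfE,I)$ whose image in $\CAT^{\bfTwo}$ is a weighted-limit cone; this image being a limit, reflection makes the original a limit, and tracking the construction identifies its apex with $\left(\llim^{W}_{\Prof}\sfA,\ \llim^{W}_{\CAT}\sfE,\ \llim^{W}\sfA \to \llim^{W}\sfE\right)$.

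First I would identify the limit in $\CAT^{\bfTwo} = \Fun(\bfTwo,\CAT)$, where weighted limits are pointwise: $\llim^{W}(\sN_{I})$ is the arrow $\llim^{W}_{\CAT}\sfE \to \llim^{W}_{\CAT}\wh{\sfA}$ obtained by applying $\llim^{W}_{\CAT}$ to the family $(\sN_{I_{j}})_{j}$. By the first hypothesis $\llim^{W}\sfA$ exists in $\Prof$, and since $\Prof \to \CAT$ preserves limits (Remark \ref{rem:limit-and-colimit-preservation-stuff-about-the-relative-adjunctions}) we get $\llim^{W}_{\CAT}\wh{\sfA} \liso \wh{\llim^{W}_{\Prof}\sfA}$; the limit arrow therefore has the shape $\llim^{W}_{\CAT}\sfE \to \wh{\llim^{W}_{\Prof}\sfA}$, and the second hypothesis says precisely that it is $\sN_{I_{\infty}}$ for a full subcategory $I_{\infty} : \llim^{W}_{\Prof}\sfA \hookrightarrow \llim^{W}_{\CAT}\sfE$. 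Now $\llim^{W}_{\Prof}\sfA$ is small, being an object of $\Prof$, and $I_{\infty}$ is dense because its nerve, the limit arrow, is a $\Cat$-weighted limit in the arrow $2$-category of the fully faithful functors $\sN_{I_{j}}$ (each $\sN_{I_{j}}$ is fully faithful as $I_{j}$ is dense), and fully faithful functors are stable under such limits --- the hom-categories of a weighted limit being the corresponding weighted limits of hom-categories. Hence $\left(\llim^{W}_{\Prof}\sfA,\ \llim^{W}_{\CAT}\sfE,\ I_{\infty}\right)$ is a legitimate $0$-cell of $\CwA$ and lifts the limit object of $\CAT^{\bfTwo}$ along $\CAT \downarrow (\Prof \to \CAT) \to \CAT^{\bfTwo}$.

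Next I would lift the projections. For each $j$ the $j$-th leg of the limit cone in $\CAT^{\bfTwo}$ is the pseudo-commuting square with top edge the $\CAT$-limit projection $\pi_{j} : \llim^{W}_{\CAT}\sfE \to \sfE_{j}$, bottom edge the image under $\Prof \to \CAT$ of the $\Prof$-limit projection $\llim^{W}_{\Prof}\sfA \to \sfA_{j}$ (a cocontinuous functor), and sides $\sN_{I_{\infty}}$ and $\sN_{I_{j}}$. Read through the description of $\CwA$ in which a $1$-cell is a cocontinuous functor $\wh{\sfA} \to \wh{\sfB}$ pseudo-factoring through the nerves, such a square is exactly the datum making $\pi_{j}$ an arities-respecting functor $\left(\llim^{W}_{\Prof}\sfA,\llim^{W}_{\CAT}\sfE,I_{\infty}\right) \to (\sfA_{j},\sfE_{j},I_{j})$; and because $\CwA \to \CAT^{\bfTwo}$ is full-and-faithful it identifies cones over $(\sfA,\sfE,I)$ on the two sides, so the $2$-cell coherences required of a $W$-weighted cone are inherited from the limit cone in $\CAT^{\bfTwo}$. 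Applying $\CwA \to \CAT^{\bfTwo}$ to the resulting $\CwA$-cone returns the limit cone we started with, and the preceding Proposition then concludes.

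I expect the one genuinely load-bearing point to be the density of $I_{\infty}$, which I have reduced to the stability of fully faithful functors under $\Cat$-weighted limits in $\CAT^{\bfTwo}$; the rest (pointwiseness of limits in $\CAT^{\bfTwo}$, limit-preservation by $\Prof \to \CAT$, and recognising the limit-cone squares as $1$-cells and cones of $\CwA$) is routine unwinding of the lemmata already in place before the Proposition.
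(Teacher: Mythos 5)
Your proposal is correct and follows essentially the same route as the paper: reduce to the limit-reflection proposition via the factorisation through $\CAT\downarrow\left(\Prof\rightarrow\CAT\right)$, identify the limit of the nerves pointwise using that $\Prof\longrightarrow\CAT$ preserves limits, and obtain density of the full subcategory $\llim^{W}\sfA\longrightarrow\llim^{W}\sfE$ from the closure of full-and-faithful functors under weighted limits. The paper states this much more tersely (justifying the closure via full-and-faithful functors being a right orthogonality class rather than via hom-categories of weighted limits, and leaving the lifting of the projection cone implicit), but the substance is identical.
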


\begin{proof}
By the limit reflection proposition above it suffices to show that,
under the hypotheses of this corollary, the limit $\llim^{W}\sN_{I}$
lies in the image of $\CwA$. To that end, see that if the weighted
limit $\llim^{W}\sfA$ exists in $\Prof$, $\llim^{W}\sN_{I}$ in
$\CAT^{\bfTwo}$ will be in the image of $\CwA$ if $\llim^{W}\sN_{I}$
is the nerve associated to a subcategory - density follows from the
fact that full-and-faithful functors are closed under limits, being,
as they are, a right orthogonality class.
\end{proof}

\section{Examples}
\begin{example}
\textbf{Eilenberg-Moore Objects of Monads with Arities }- Suppose
that $T\circlearrowright\left(\sfA,\sfE,I\right)$ is a monad in $\CwA$.
It follows from Corollary \ref{thm:Prof-is-the-obstruction} and Lemma
\ref{lem:monad-with-arities-and-presheaf-crap} that $\left(\sfA_{T},\sfE^{T},I^{T}\right)$
is the Eilenberg-Moore object for $T$ if $\sfA_{T}\longrightarrow\sfE^{T}$
is a full subcategory, but this much is clear.
\end{example}

\begin{rem}
The promised connection between this limit reflection result and the
bijective-on-objects full-and-faithful factorization system comes
from the fact that Kleisli objects in $\Prof$ are bijective on objects
functors $A\longrightarrow A_{T}$.
\end{rem}

\begin{example}
\textbf{\label{exa:A-cellular-description-of-Z-categories}A cellular
description of strict-$\Z$-categories - }In this example we will
identify useful arities for the category 
\[
\llim\left\{ \cdots\xrightarrow{S^{*}}\StrCat\xrightarrow{S^{*}}\StrCat\right\} \liso\wh{\bG_{\Z}}^{T_{\Z}}
\]
 of $\Z$-categories.

Now, the functor $S:\bG\longrightarrow\bG$ which induced $S^{*}$
also induces $S_{!}:\wh{\bG}\longrightarrow\wh{\bG}$ which restricts
along $\mathbf{P\longrightarrow\bG}$ to $S_{\bP}=\sd{S_{!}}_{\bP}:\bP\longrightarrow\bP$.
These functors fit into the the following commuting diagram.
\[
\xyR{1.5pc}\xyC{1.5pc}\xymatrix{\bG\ar[r]^{J}\ar[d]_{S} & \bP\ar[d]|-{S_{\bP}}\ar[r]^{I} & \wh{\bG}\ar[d]^{S_{!}}\\
\bG\ar[r]_{J} & \bP\ar[r]_{I} & \wh{\bG}
}
\]
It follows from that commutativity that the right square in 
\[
\xyR{1.5pc}\xyC{1.5pc}\xymatrix{\bP\ar[d]_{S_{\bP}}\ar[r]^{I} & \wh{\bG}\ar[r]^{\sN_{I}} & \wh{\bP}\\
\bP\ar[r]_{I} & \wh{\bG}\ar[u]|-{S^{*}}\ar[r]_{\sN_{I}} & \wh{\bP}\ar[u]_{\left(S_{\bP}\right)^{*}}
}
\]
pseudo-commutes, whence it comprises a $1$-cell in $\CwA$, as for
any $X\in\wh{\bG}$, 
\begin{eqnarray*}
\sN_{I}S^{*}X & = & \wh{\bG}\left(I,S^{*}X\right)\\
 & \iso & \wh{\bG}\left(S_{!}\circ I,X\right)\\
 & \iso & \wh{\bG}\left(I\circ S_{\bP}\right)\\
 & \liso & \wh{\bP}\left(S_{\bP},\sN_{I}X\right)\\
 & = & S_{\bP}^{*}\sN_{I}X
\end{eqnarray*}
and the left square pseudo-commutes as $S_{!}$ is full-and-faithful\footnote{It is important here that we have used the \emph{irreflexive }globe
category and not the reflexive globe category. This is an instance
of the principle/phenomenon which undergirds Makkai's FOLDS or Henry's
realated work on the language of a model category. The point is that
inverse categories provide good ``theories of sorts'', whereas other
categories do not.} so the unit of the adjunction $S_{!}\dashv S^{*}$ is invertible.

We may then assemble a much larger pseudo-commuting diagram. 
\[
\xyR{1.5pc}\xyC{1.5pc}\xymatrix{\bP\ar[d]\ar[r] & \wh{\bG}\ar[r] & \wh{\bP}\\
\bP\ar[r]\ar[d] & \wh{\bG}\ar[u]\ar[r] & \wh{\bP}\ar[u]\\
\bP\ar[r]\ar[d] & \wh{\bG}\ar[u]\ar[r] & \wh{\bP}\ar[u]\\
\vdots & \vdots\ar[u] & \vdots\ar[u]
}
\]
We note that the columns of $\wh{\bG}$ 's and $\wh{\bP}$'s are comprised
of iso-fibrations, so the conical limits over them compute pseudo-limits.
The left-hand column of commuting squares induces a map 
\[
I_{\Z}:\underbrace{\colim\left\{ \cdots\leftarrow\bP\leftarrow\bP\right\} }_{\bP_{\Z}}\longrightarrow\underbrace{\llim\left\{ \cdots\rightarrow\wh{\bG}\rightarrow\wh{\bG}\right\} }_{\wh{\bG_{\Z}}}
\]
and the nerve associated to this map
\[
\sN_{I_{\Z}}:\wh{\bG_{\Z}}\longrightarrow\wh{\bP_{\Z}}
\]
is the pseudo-limit in $\CAT^{\bfTwo}$ of the horizontal parts of
the right-hand column. By Theorem \ref{thm:Prof-is-the-obstruction}
to show that $\left(\bP_{\Z},\wh{\bG_{\Z}},I_{\bZ}\right)$ is the
conical limit in $\CwA$ of 
\[
\cdots\rightarrow\left(\bP,\wh{\bG},I\right)\rightarrow\left(\bP,\wh{\bG},I\right)
\]
it then suffices to show that 
\[
I_{\bZ}:\bP_{\Z}\longrightarrow\wh{\bG_{\Z}}
\]
 is a full subcategory.

Using the $\Z_{\leq0}^{\op}$ indexing for the copies of $\bP$ in
$\bP_{\Z}$ and the $\Z_{\leq0}$ indexing for the copies of $\wh{\bG}$
in $\llim\wh{\bG}$ we may write the map $I_{\Z}$ explicitly as 
\[
\xyR{0pc}\xyC{5pc}\xymatrix{I_{\Z}:\bP_{\Z}\ar[r] & \llim\left\{ \cdots\rightarrow\wh{\bG}\rightarrow\wh{\bG}\right\} \\
\left(-i,T\right)\ar@{|->}[r] & \left(\left(S^{*}\right)^{i}\sN_{I}\left(T\right),\dots,S^{*}\sN_{I}\left(T\right),\sN_{I}S_{\bP}\left(T\right),\dots\right)
}
\]
Since $\sN_{I}$ is a full subcategory, it follows that $I_{\bZ}$
is injective on objects and faithful, but since $S_{\bP}$ is also
full-and-faithful, it follows that $I_{\bZ}$ is full as well. Composing
$I_{\bZ}$ with the equivalence $\llim\left\{ \cdots\rightarrow\wh{\bG}\rightarrow\wh{\bG}\right\} \liso\wh{\bG_{\Z}}$
permits to identify $I_{\Z}$ as the full subcategory of $\wh{\bG_{\Z}}$
on presheaves of the form
\[
\colim\left\{ \vcenter{\vbox{\xyR{1.5pc}\xyC{1.5pc}\xymatrix{\overline{n_{0}} &  & \overline{n_{1}} &  & \overline{n_{\ell-1}} &  & \overline{n_{\ell}}\\
 & \overline{m_{1}}\ar[ul]\ar[ur] &  & \cdots\ar[ur]\ar[ul] &  & \overline{m_{\ell-1}}\ar[ul]\ar[ur]
}
}}\right\} 
\]
 for \emph{integers} $n_{0},m_{1},n_{1},\dots,n_{\ell-1},m_{\ell-1},n_{\ell}$
with each $m_{i}\leq n_{i-1},n_{i}$; $\bP_{\Z}$ is the full subcategory
of $\wh{\bG_{\Z}}$ of finite $\Z$-globular pasting diagrams. These
wide spans moreover comprise a density presentation $\Psi_{\Z}$ for
$\bG_{\Z}\longrightarrow\mathbf{P}_{\Z}$.

Lastly, since limits commute with limits, whence limits over towers
commute with the taking of Eilenberg-Moore objects, it follows that
\[
\left(\Theta_{\bZ}:=\colim\left\{ \dots\leftarrow\Theta\leftarrow\Theta\right\} ,\wh{\bG_{\Z}}^{T_{\Z}},I_{\Z}^{T}\right)
\]
 is the Eilenberg-Moore object for $T_{\Z}$ as a monad in $\CwA$
on $\left(\bP_{\Z},\wh{\bG_{\Z}},I_{\Z}\right)$. Thus, by Proposition
\ref{prop:bourke-garner-density-pres-to-regulus-machine}, and a repeat
of our argument from Example \ref{exa:Inner-horns-by-density-presentation}
it follows that a $\Z$-cellular set is the nerve of a strict-$\Z$-category
if and only if it is:
\begin{itemize}
\item right orthogonal to the $\Z$-cellular analog of the spines; or
\item right orthogonal to the $\Z$-cellular analog of the inner horns.
\end{itemize}
\end{example}

\begin{rem}
In a sequel to this work, we will reprove a result of \cite{LessardThesis},
and we will show how the homotopy coherent version of this cellular
presentation of $\Z$-categories provides a model of Spectra as pointed
$\Z$-groupoids.
\end{rem}

\appendix

\part*{Appendix}

\section{\label{part:Generalized-Spectrification}\label{part:Generalized-Spectrification-1}Generalized
Spectrification}

Recall that in the $2$-category $\Cat$, given:
\begin{itemize}
\item a $2$-category $\fJ$;
\item a weight $2$-functor $W:\fJ\longrightarrow\Cat$; and
\item a $2$-diagram of categories $\fX:\fJ\longrightarrow\Cat$
\end{itemize}
the $\Hom$-category of natural transformations and modifications
is determined by the $\Hom$-categories $\Cat\left(W_{i},\mathsf{X}_{j}\right)$
by way of the end formula
\[
\left[\fJ,\Cat\right]\left(W,\fX\right)\liso\int_{j\in\fJ}\Cat\left(W_{j},\fX_{j}\right)
\]

We may then be wont to ask: which properties or structures involving
the values taken by the functor $\Cat\left(W_{\left(\_\right)},\mathsf{X}_{\left(\_\right)}\right)$
are sufficiently (and appropriately) coherent so that they induce
similar properties or structures on the weighted limit? In particular,
if for some objects $i$ and $j$ of $\mathsf{J}$, we have that $\mathsf{X}_{j}$
has all $W_{i}$ colimits then we have adjunctions 
\[
\left\{ \xyC{5pc}\xymatrix{\left[W_{i},\fX_{j}\right]\ar@/^{.625pc}/[r]^{\colim^{\left(i,j\right)}} & \fX_{j}\ar@/^{.625pc}/[l]_{\bot}^{\t^{\left(i,j\right)}}}
\right\} _{\left(i,j\right)\in\Ob\left(\mathsf{J}\right)^{2}}
\]
When do these object-wise adjunctions beget adjunctions in $\left[\mathsf{J},\Cat\right]$?
Taking a hint from classical treatments of stable homotopy theory
we develop an easy criterion and show, by explicit example, that the
spectrification functors of stable homotopy theory are recovered from
this $2$-categorical treatment.

\subsection{A Criterion for small conical limits in $\protect\CAT$ to be reflective
in weighted-limits in $\protect\CAT$: generalized spectrification}
\begin{thm}
\label{thm: generalized-spectrification-strict-limit}Let $\mathsf{J}$
be a $1$-category and let $\mathsf{X}:\sfJ\longrightarrow\CAT$ be
a diagram of categories. Suppose further that, for all $f:j\rightarrow k$
in $\sfJ$:
\begin{itemize}
\item the diagram $\fX:\fJ\longrightarrow\CAT$ is fibrant in the injective-canonical
model structure on $\left[\sf J,\CAT\right]$;
\item the category $\mathsf{X}_{k}$ admits all $W_{j}$ and $W_{k}$-colimits;
\item the functor $\mathsf{X}_{f}$ preserves $W_{j}$ colimits; and
\item the functor $W_{f}$ is final.
\end{itemize}
Then, the functor 
\[
\xymatrix{\left[\mathsf{J},\CAT\right]\left(W,\mathsf{X}\right) & \left[\sfJ,\CAT\right]\left(\bullet,\mathsf{X}\right)\ar[l]_{\Delta}}
\]
 admits a left adjoint, which we will call $\sf{sp}$.%
\end{thm}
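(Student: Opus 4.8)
The plan is to construct $\sf{sp}$ explicitly as a fibrewise colimit. Throughout, the injective-canonical fibrancy of $\mathsf{X}$ lets me work with the pseudo-limits equivalent to $\left[\sfJ,\CAT\right]\left(\bullet,\mathsf{X}\right)$ and $\left[\sfJ,\CAT\right]\left(W,\mathsf{X}\right)$, so that the strict equalities one would otherwise have to enforce become coherent comparison isomorphisms. By the end formula $\left[\sfJ,\CAT\right]\left(W,\mathsf{X}\right)\liso\int_{j\in\sfJ}\CAT\left(W_{j},\mathsf{X}_{j}\right)$, an object $x$ of the weighted limit is a family $\left(x_{j}\colon W_{j}\longrightarrow\mathsf{X}_{j}\right)_{j\in\sfJ}$ equipped with coherent isomorphisms $\mathsf{X}_{f}\circ x_{j}\liso x_{k}\circ W_{f}$, and an object of $\llim\mathsf{X}=\left[\sfJ,\CAT\right]\left(\bullet,\mathsf{X}\right)$ is a family $\left(y_{j}\in\mathsf{X}_{j}\right)_{j}$ with coherent isomorphisms $\mathsf{X}_{f}\left(y_{j}\right)\liso y_{k}$; the functor $\Delta$ sends $\left(y_{j}\right)_{j}$ to the family $\left(\Delta_{j}y_{j}\right)_{j}$ of constant functors. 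Fibrewise, $\Delta_{j}\colon\mathsf{X}_{j}\to\CAT\left(W_{j},\mathsf{X}_{j}\right)$ is precomposition with $W_{j}\to\mathbf{1}$, and it has a left adjoint given by the conical colimit $\lcolim{W_{j}}$, which exists in $\mathsf{X}_{j}$ by the second hypothesis (with $f=\id_{j}$). I would therefore set
\[
\sf{sp}\left(x\right)_{j}:=\lcolim{W_{j}}x_{j}.
\]

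The substance of the theorem is that this fibrewise recipe descends to $\llim\mathsf{X}$, and this is exactly where the last two hypotheses are consumed. For $f\colon j\to k$, I would compute, using first that $\mathsf{X}_{f}$ preserves $W_{j}$-colimits and then the comparison $\mathsf{X}_{f}\circ x_{j}\liso x_{k}\circ W_{f}$,
\[
\mathsf{X}_{f}\bigl(\sf{sp}\left(x\right)_{j}\bigr)\liso\lcolim{W_{j}}\bigl(\mathsf{X}_{f}\circ x_{j}\bigr)\liso\lcolim{W_{j}}\bigl(x_{k}\circ W_{f}\bigr),
\]
and then, since $W_{f}$ is final, the right-hand colimit is computed by the colimit cocone of $x_{k}$ restricted along $W_{f}$, identifying it canonically with $\lcolim{W_{k}}x_{k}=\sf{sp}\left(x\right)_{k}$. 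Because all of these isomorphisms are canonical they cohere over composable arrows of $\sfJ$, so $\sf{sp}\left(x\right)$ really is an object of $\llim\mathsf{X}$; the colimit cocones $\lambda^{j}\colon x_{j}\Rightarrow\Delta_{j}\sf{sp}\left(x\right)_{j}$ are likewise compatible with them and so assemble into a morphism $\eta_{x}\colon x\longrightarrow\Delta\sf{sp}\left(x\right)$, and the same preservation-plus-finality bookkeeping makes $\sf{sp}$ functorial and $\eta$ natural.

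It then remains to verify that $\eta$ is the unit of $\sf{sp}\dashv\Delta$. Given $\left(y_{j}\right)_{j}\in\llim\mathsf{X}$ and a morphism $\mu\colon x\to\Delta\left(y\right)$, i.e.\ a compatible family of cocones $\mu^{j}\colon x_{j}\Rightarrow\Delta_{j}y_{j}$, the universal property of $\lcolim{W_{j}}x_{j}$ yields a unique $\overline{\mu}^{j}\colon\sf{sp}\left(x\right)_{j}\to y_{j}$ with $\Delta_{j}\overline{\mu}^{j}\circ\lambda^{j}=\mu^{j}$; and the preservation-plus-finality argument once more — a map out of $\sf{sp}\left(x\right)_{k}$ being determined by the restriction of its defining cocone along $W_{f}$ — forces $\mathsf{X}_{f}\bigl(\overline{\mu}^{j}\bigr)=\overline{\mu}^{k}$ up to the coherence isomorphisms, so that $\left(\overline{\mu}^{j}\right)_{j}$ is the unique morphism $\sf{sp}\left(x\right)\to y$ in $\llim\mathsf{X}$ with $\Delta\left(\overline{\mu}\right)\circ\eta_{x}=\mu$. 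This is precisely the bijection $\llim\mathsf{X}\bigl(\sf{sp}\left(x\right),y\bigr)\liso\left[\sfJ,\CAT\right]\left(W,\mathsf{X}\right)\bigl(x,\Delta y\bigr)$, natural in $x$ and $y$, which is the claimed adjunction.

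I expect the one genuinely delicate point to be the coherence bookkeeping of the second paragraph: not merely that $\mathsf{X}_{f}\bigl(\sf{sp}\left(x\right)_{j}\bigr)$ and $\sf{sp}\left(x\right)_{k}$ are abstractly isomorphic, but that the canonical comparisons cohere over composable arrows of $\sfJ$ and are compatible with the cocones $\lambda^{j}$. It is this $2$-categorical datum — trivial in the classical, iterable situation, and the reason the finality and colimit-preservation hypotheses cannot be dropped — that makes $\sf{sp}$ well-defined both on objects and on morphisms; the injective-canonical fibrancy of $\mathsf{X}$ is what lets us package it as coherent isomorphisms rather than having to strictify by hand.
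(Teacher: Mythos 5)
Your construction is correct and is essentially the paper's own proof: both take $\mathsf{sp}$ to be the fibrewise conical colimit $\lcolim{W_{j}}x_{j}$, use the injective-canonical fibrancy of $\mathsf{X}$ to trade the strict limits for pseudo-limits, and spend the finality of $W_{f}$ and the $W_{j}$-colimit-preservation of $\mathsf{X}_{f}$ on exactly the two comparison isomorphisms you identify. The only difference is packaging: the paper presents the hom-categories as (pseudo-)equalizers of products and obtains $\mathsf{sp}$ as a pseudo-limit of the fibrewise adjunctions $\colim^{\left(i,j\right)}\dashv\Delta^{\left(i,j\right)}$ once the relevant squares are shown to pseudo-commute, whereas you verify the unit's universal property by hand.
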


\begin{proof}
Consider the commutative diagram
\[
\vcenter{\vbox{\xyR{3pc}\xyC{3pc}\xymatrix{\sf{Ps}\left(\sf J,\CAT\right)\left(W,\sf X\right) & \sf{Ps}\left(\sf J,\CAT\right)\left(\bullet,\sf X\right)\ar[l]_{\Delta^{\p}}\\
\left[\sf J,\CAT\right]\left(W,\sf X\right)\ar[u]^{I_{W}} & \left[\sf J,\CAT\right]\left(\bullet,\sf X\right)\ar[u]_{I}\ar[l]^{\Delta}
}
}}
\]
where $I$ and $I_{W}$ are the obvious inclusions. Since $I_{W}$
is full-and-faithful, if $I_{W}\Delta$ admits a left adjoint functor
$L$, then $LI_{W}$ is left adjoint to $\t$. But $I_{W}\Delta=\Delta^{\p}I$,
so to produce a left adjoint to $\Delta$ it suffices to produce one
for $\Delta^{\p}I$. Since $\sf X$ is fibrant however, and $\left[\sf J,\CAT\right]\left(\bullet,\sf X\right)$
enjoys the universal property of the conical limit $\llim\sf X$,
$I$ is an adjoint equivalence, whence to produce a left adjoint to
$\Delta^{\p}I$ it is enough to produce a left adjoint for $\Delta^{\p}$.
It is just such an adjoint we will now describe.

Recall that for any weight $W$, the $\Hom$-category $\left[\mathsf{J},\CAT\right]\left(W,\mathsf{X}\right)$,
may be computed as the equalizer
\[
\left[\fJ,\CAT\right]\left(W,\fX\right)\liso\llim\left\{ \vcenter{\vbox{\xyR{0pc}\xyC{5pc}\xymatrix{\prod_{i\in\fJ}\left[W_{i},\fX_{i}\right]\ar@/^{.5pc}/[r]^{\rho}\ar@/_{.5pc}/[r]_{\lambda} & \prod_{j\in\fJ}\prod_{k\in\fJ}\left[\fJ\left(j,k\right),\left[W_{j},\fX_{k}\right]\right]}
}}\right\} 
\]
where 
\[
\frac{\rho_{j,k}:\left[W_{j},\fX_{j}\right]\longrightarrow\left[\fJ\left(j,k\right),\left[W_{j},\fX_{k}\right]\right]}{\left[W_{j},\sf X_{\left(\_\right)}\right]:\fJ\left(j,k\right)\longrightarrow\left[\left[W_{j},\fX_{j}\right],\left[W_{j},\fX_{k}\right]\right]}
\]
and like-wise 
\[
\frac{\lambda_{j,k}:\left[W_{k},\fX_{k}\right]\longrightarrow\left[\fJ\left(j,k\right),\left[W_{j},\fX_{k}\right]\right]}{\left[W_{\left(\_\right)},\fX_{k}\right]:\fJ\left(j,k\right)\longrightarrow\left[\left[W_{k},\fX_{k}\right],\left[W_{j},\fX_{k}\right]\right]}
\]
More, since $\fJ$ is a $1$-category, we have an isomorphism
\[
\prod_{j\in\fJ}\prod_{k\in\fJ}\left[\fJ\left(j,k\right),\left[W_{j},\fX_{k}\right]\right]\liso\prod_{j\in\fJ}\prod_{k\in\fJ}\prod_{f\in\fJ\left(j,k\right)}\left[W_{j},\fX_{k}\right]
\]
and thus the map $\rho$ may be made explicit as a product.
\[
\left(\left(\left[W_{j},\fX_{f}\right]\right)_{f\in\fJ\left(j,k\right)}\right)_{k\in\fJ}:\prod_{j\in\fJ}\left(\xyR{0pc}\xyC{5pc}\xymatrix{\left[W_{j},\fX_{j}\right]\ar[r] & \underset{k\in\fJ}{\prod}\underset{f\in\fJ\left(j,k\right)}{\prod}\left[W_{j},\fX_{k}\right]}
\right)
\]
Similarly $\lambda$ may be made explicit as the product.
\[
\left(\left(\left[W_{f},\fX_{k}\right]\right)_{f\in\fJ\left(j,k\right)}\right)_{j\in\fJ}:\prod_{k\in\fJ}\left(\xyR{0pc}\xyC{5pc}\xymatrix{\left[W_{k},\fX_{k}\right]\ar[r] & \underset{j\in\fJ}{\prod}\underset{f\in\fJ\left(j,k\right)}{\prod}\left[W_{j},\fX_{k}\right]}
\right)
\]
The pseudo-limit of that same diagram computes the $\Hom$-category
$\sf{Ps}\left(W,\sf X\right)$.

Thus, thinking of the taking of pseudo-equalizers as a functor 
\[
\underleftarrow{\sf{pslim}}:\sf{Ps}\left(\partial\overline{1},\CAT\right)\longrightarrow\Cat
\]
 we see that $\Delta^{\p}$ is $\underleftarrow{\sf{pslim}}$ of the
pseudo-natural transformations of functors $\partial\overline{1}\longrightarrow\CAT$
which comprises the diagram
\[
\vcenter{\vbox{\xyR{5pc}\xyC{7pc}\xymatrix{\prod_{i\in\fJ}\left[W_{i},\fX_{i}\right]\ar@/^{.5pc}/[r]^{\rho}\ar@/_{.5pc}/[r]_{\lambda} & \prod_{j\in\fJ}\prod_{k\in\fJ}\prod_{f\in\fJ\left(j,k\right)}\left[W_{j},\fX_{k}\right]\\
\prod_{i\in\fJ}\fX_{i}\ar@/^{.5pc}/[r]^{\rho}\ar@/_{.5pc}/[r]_{\id}\ar[u]_{\prod\t^{\left(i,i\right)}} & \prod_{j\in\fJ}\prod_{k\in\fJ}\prod_{f\in\fJ\left(j,k\right)}\fX_{k}\ar[u]_{\prod\prod\prod\t^{\left(j,k\right)}}
}
}}
\]
As such, it suffices to find left adjoints $L^{\left(i,j\right)}$
\[
\xyC{5pc}\xymatrix{\left[W_{i},\fX_{j}\right]\ar@/^{.625pc}/[r]^{L^{\left(i,j\right)}} & \fX_{j}\ar@/^{.625pc}/[l]_{\bot}^{\t^{\left(i,j\right)}}}
\]
 for each pair of objects $i$ and $j$ of $\sf J$ such that the
squares - solid and dashed, and solid and dotted respectively - in
the assembled diagram
\[
\vcenter{\vbox{\xyR{5pc}\xyC{7pc}\xymatrix{\prod_{i\in\fJ}\left[W_{i},\fX_{i}\right]\ar@/^{.5pc}/@{-->}[r]^{\prod\left(\left(\left[W_{k},\sf X_{f}\right]\right)\right)}\ar@/_{.5pc}/@{..>}[r]_{\prod\left(\left(\left[W_{f},\sf X_{k}\right]\right)\right)}\ar[d]_{\prod L^{\left(i,i\right)}} & \prod_{j\in\fJ}\prod_{k\in\fJ}\prod_{f\in\fJ\left(j,k\right)}\left[W_{j},\fX_{k}\right]\ar[d]^{\prod\prod\prod L^{\left(j,k\right)}}\\
\prod_{i\in\fJ}\fX_{i}\ar@/^{.5pc}/@{-->}[r]^{\prod\left(\left(\sf X_{f}\right)\right)}\ar@/_{.5pc}/@{..>}[r]_{\id} & \prod_{j\in\fJ}\prod_{k\in\fJ}\prod_{f\in\fJ\left(j,k\right)}\fX_{k}
}
}}
\]
pseudo-commute.

Our hypotheses will imply that the family 
\[
\left\{ L^{\left(i,j\right)}=\colim^{\left(i,j\right)}:\left[W_{i},\sf X_{j}\right]\longrightarrow\sf X_{j}\right\} _{\left(i,j\right)\in\Ob\left(\sf J\right)^{2}}
\]
 satisfies this criterion. Since $W_{f}$ is final for all morphisms
$f:j\rightarrow k$ of $\sf J$, for any $\phi\in\Ob\left(\left[W_{k},\sf X_{k}\right]\right)$
we have the following chase.
\[
\vcenter{\vbox{\xyR{2.5pc}\xyC{2.5pc}\xymatrix{\left[W_{k},\sf X_{k}\right]\ar[rr]^{\left[W_{f},\sf X_{k}\right]}\ar[dd]_{\colim^{\left(k,k\right)}} &  & \left[W_{j},\sf X_{k}\right]\ar[dd]\sp(0.3){\colim^{\left(j,k\right)}}\\
 & \phi\ar@{|->}[rr]\ar@{|->}[dd] &  & \phi W_{f}\ar@{|->}[dd]\\
\sf X_{k}\ar[rr]\sp(0.3){\id} &  & \sf X_{k}\\
 & \colim\phi\ar@{|->}[rr] &  & \colim\phi\osi\colim\phi W_{f}
}
}}
\]
Thus, the square
\[
\vcenter{\vbox{\xyR{5pc}\xyC{7pc}\xymatrix{\prod_{i\in\fJ}\left[W_{i},\fX_{i}\right]\ar@/_{.5pc}/[r]_{\prod\left(\left(\left[W_{f},\sf X_{k}\right]\right)\right)}\ar[d]_{\prod\colim^{\left(i,i\right)}} & \prod_{j\in\fJ}\prod_{k\in\fJ}\prod_{f\in\fJ\left(j,k\right)}\left[W_{j},\fX_{k}\right]\ar[d]^{\prod\prod\prod\colim^{\left(j,k\right)}}\\
\prod_{i\in\fJ}\fX_{i}\ar@/_{.5pc}/[r]_{\id} & \prod_{j\in\fJ}\prod_{k\in\fJ}\prod_{f\in\fJ\left(j,k\right)}\fX_{k}
}
}}
\]
pseudo-commutes. Similarly, since $\sf X_{f}$ preserves $W_{j}$
colimits for all $f:j\rightarrow k$ of $\sf J$, we have for each
$\phi\in\Ob\left(\left[W_{j},\sf X_{j}\right]\right)$ the following
chase.
\[
\vcenter{\vbox{\xyR{2.5pc}\xyC{2.5pc}\xymatrix{\left[W_{j},\sf X_{j}\right]\ar[rr]^{\left[W_{j},\sf X_{f}\right]}\ar[dd]_{\colim^{\left(j,j\right)}} &  & \left[W_{j},\sf X_{k}\right]\ar[dd]\sp(0.3){\colim^{\left(j,k\right)}}\\
 & \phi\ar@{|->}[rr]\ar@{|->}[dd] &  & \sf X_{f}\phi\ar@{|->}[dd]\\
\sf X_{j}\ar[rr]\sp(0.3){\sf X_{f}} &  & \sf X_{k}\\
 & \colim\phi\ar@{|->}[rr] &  & \sf X_{f}\left(\colim\phi\right)\osi\colim\left(\sf X_{f}\phi W_{f}\right)
}
}}
\]
Thus, the square 
\[
\vcenter{\vbox{\xyR{5pc}\xyC{7pc}\xymatrix{\prod_{i\in\fJ}\left[W_{i},\fX_{i}\right]\ar@/^{.5pc}/[r]^{\prod\left(\left(\left[W_{k},\sf X_{f}\right]\right)\right)}\ar[d]_{\prod\colim^{\left(i,i\right)}} & \prod_{j\in\fJ}\prod_{k\in\fJ}\prod_{f\in\fJ\left(j,k\right)}\left[W_{j},\fX_{k}\right]\ar[d]^{\prod\prod\prod\colim^{\left(j,k\right)}}\\
\prod_{i\in\fJ}\fX_{i}\ar@/^{.5pc}/[r]^{\prod\left(\left(\sf X_{f}\right)\right)} & \prod_{j\in\fJ}\prod_{k\in\fJ}\prod_{f\in\fJ\left(j,k\right)}\fX_{k}
}
}}
\]
pseudo-commutes. Then, defining functor $\sf{sp}^{\p}$ as the pseudo-limit
$\underleftarrow{\sf{pslim}}\left(\prod\colim^{\left(i,i\right)},\prod\prod\prod\colim^{\left(j,k\right)}\right)$,
we have that $\sf{sp^{\p}}$ is left adjoint to $\Delta^{\p}=\underleftarrow{\sf{pslim}}\left(\prod\Delta^{\left(i,i\right)},\prod\prod\prod\Delta^{\left(j,k\right)}\right)$.
In light of our earlier observations, we have $\sf{sp}=\sf{sp}^{\p}I_{W}$
is left adjoint to $\Delta$ and the theorem is proved.
\end{proof}

\begin{example}
To make clear the necessity of the condition that $\fX$ is injective-canonical
fibrant, consider the following example. Let $\fJ=\bG_{\leq1}$ (the
ur-pair-of-parallel-arrow) and let $W=\fX$ be the $\sfJ$-diagram
\[
\bullet\xymatrix{\ar@/^{.5pc}/[r]^{s}\ar@/_{.5pc}/[r]_{t} & \mathbf{I}}
\]
where $\bullet$ is the terminal category and $\mathbf{I}$ is the
ur-isomorphism. Since $s$ and $t$ are equivalences of categories
and $\bullet$ is co-complete it follows that all hypotheses of the
theorem, save fibrancy, are satisfied. We see however that there does
not exist \emph{any }adjunction between $\left[\fJ,\CAT\right]\left(\bullet,\fX\right)$
and $\left[\fJ,\CAT\right]\left(W,\fX\right)$ as the former is the
empty category whereas the latter is a terminal category.

To see that the fibrancy hypothesis excludes this example we observe
that, since $\fJ$ is both direct and inverse, it follows that the
Reedy-canonical model structure on $\left[\fJ,\CAT\right]$ ( picking
$\fJ^{-}=\fJ$ ) and the injective-canonical model structure coincide.
Thus $\fX$ is fibrant precisely if the matching maps 
\[
\fX\left(\overline{0}\right)\longrightarrow\fX\left(\overline{1}\right)\times\fX\left(\overline{1}\right)
\]
 and 
\[
\fX\left(\overline{1}\right)\longrightarrow\bullet
\]
 are iso-fibrations. While the second of those maps is an iso-fibration,
as all maps into the terminal category are iso-fibrations, the first
map, which evaluates to 
\[
\bullet\xrightarrow{\left(s,t\right)}\mathbf{I}\times\mathbf{I}
\]
is \emph{not }an iso-fibration as $\left(s,t\right)$, as an object
of $\mathbf{I}\times\mathbf{I}$, is isomorphic therein to all three
other objects of $\mathbf{I}\times\mathbf{I}$ and none of those isomorphisms
lift.
\end{example}

\subsection{Examples: Spectrification Three Ways}

We now attend to the promised examples.
\begin{example}
\textbf{\label{exa:Sequential-spectra-}Sequential spectra} - Let
$\sf S$ denote the usual category of spaces and let $\sf S_{\bullet}$
denote the category of pointed spaces. Let $\sf{J=\Z_{\leq0}}$, let
$W=\left(\Z_{\leq0}\downarrow\_\right)^{\op}:\sf J\longrightarrow\Cat$,
and let $\sf X:\Z_{\leq0}\longrightarrow\CAT$ be the functor sending
each non-positive integer to $\sfS_{\bullet}$ and each $-\left(n+1\right)\longrightarrow n$
to $\Omega:\sfS_{\bullet}\longrightarrow\sfS_{\bullet}$. As we will
show, these data satisfy the hypotheses of Theorem \ref{thm: generalized-spectrification-strict-limit}.

First, since $\Z_{\leq0}$ is an inverse category, it follows that
the fibrant objects of the injective-canonical model structure are
but sequences of iso-fibrations - for any inverse category $A$ and
model category $C$, the fibrant objects are diagrams of fibrations
between fibrant objects and all categories are fibrant. Then since
$\Omega$ is an iso-fibration\footnote{there are of course numerous aguments for this, for example $\T_{\bullet}\longrightarrow\S_{\bullet}$
is conservative, and non-identity isomorphisms of sets are just renamings,
so we may rename the elements of $X$ as $Y^{\p}$ in such a way that
$\Omega Y=Y^{\p}$.}, it follows that $\sf X$ is injective-canonical fibrant. Second,
$\sf S_{\bullet}$ is co-complete so it has all $W_{-n}$-colimits.
Third, as $W_{-n}=\left(\Z_{\leq0}\downarrow-n\right)^{\op}\liso\Z_{\geq n}$
and $\Z_{\geq n+1}\longrightarrow\Z_{\geq n}$ is final, it follows
that $W$ is a diagram whose morphisms are all final functors. Lastly,
the functor $\Omega=\Hom\left(S^{1},\_\right)$ preserves sequential
colimits; given any $G:\Z_{\geq0}\longrightarrow\sfS_{\bullet}$,
since any map $S^{1}\longrightarrow\colim G$ factors through some
$G\left(n\right)\longrightarrow\colim G$ it follows that $\sf S_{\bullet}\left(S^{1},\colim G\right)\liso\colim\sf S_{\bullet}\left(S^{1},G\left(\_\right)\right)$.
Theorem \ref{thm: generalized-spectrification-strict-limit} then
provides that there is an adjunction 
\[
\xyC{5pc}\xymatrix{\left[\Z_{\leq n},\CAT\right]\left(W,\sf X\right)\ar@/^{.625pc}/[r] & \left[\Z_{\leq0},\CAT\right]\left(\bullet,\sf X\right)\ar@/^{.625pc}/[l]_{\bot}^{\t}}
\]
What we will now show is that this adjunction recovers $\Omega$-spectra
as a reflective subcategory of sequential spectra.

A sequential spectrum object in $\sf S_{\bullet}$ is comprised of:
\begin{itemize}
\item an $\N$ indexed family of pointed spaces $\left(X_{-n}\right)_{n\in\N}$;
together with
\item morphisms $\left(\phi_{-n}:X_{-n}\longrightarrow\Omega X_{-\left(n+1\right)}\right)_{n\in\N}$
where $\Omega=\sfS_{\bullet}\left(S^{1},\_\right)$, the usual loop-space
functor.
\end{itemize}
The data of these objects is readily seen equivalent to that of oplax
cones over $\sf X:\Z_{\leq0}\longrightarrow\CAT$, so the usual category
of sequential spectra is here recovered as $\oplaxlim\sf X$. As we
will show, the weight $W$ is in fact the oplax-weight for $\Z_{\leq0}$,
meaning that those oplax cones are equivalent to natural transformations
\[
\Xi:W\Longrightarrow\sf X:\Z_{\leq0}\longrightarrow\CAT
\]

Observe that the component of some natural transformation $\Xi$ 
\[
\Xi:W\Longrightarrow\sf X:\Z_{\leq0}\longrightarrow\CAT
\]
 at the object $-n$ of $\Z_{\leq0}$, is but a diagram
\[
\cdots\longleftarrow X_{-\left(n+2\right)}^{-n}\longleftarrow X_{-\left(n+1\right)}^{-n}\longleftarrow X_{-n}^{-n}
\]
 in $\sfS_{\bullet}$. Consider then that for each $f:-m\rightarrow-n$
of $\Z_{\leq0}$, the naturality criterion
\[
\xyR{3pc}\xyC{3pc}\xymatrix{\left(\Z_{\leq0}\downarrow-m\right)^{\op}\ar[d]\ar[r] & \sfS_{\bullet}\ar[d] & \left(-k\rightarrow-m\right)\ar@{|->}[d]\ar@{|->}[r] & X_{-k}^{-m}\ar@{|->}[d]\\
\left(\Z_{\leq0}\downarrow-n\right)^{\op}\ar[r] & \sfS_{\bullet} & \left(-k-m-n\right)\ar@{|->}[r] & X_{-k}^{-n}=\Omega^{n-m}\left(X_{-k}^{-n}\right)
}
\]
imposes the identities 
\[
\left\{ X_{-k}^{-n}=\Omega^{n-m}\left(X_{-k}^{-n}\right)\right\} _{-k\rightarrow-m\rightarrow-n\in\Z_{\leq0}^{\bfTwo}}
\]
 so our natural transformations are equivalent to oplax cones as follows:
the data of a natural transformation $\left(\Z_{\leq0}\downarrow\_\right)^{\op}\Longrightarrow\sf X$
is given in the solid arrows and equalities in the diagram below,
and the data of an oplax cone is given by the objects along the diagonal
together with the dashed arrows.
\[
\xyR{.5pc}\xyC{3pc}\xymatrix{\ddots\\
\cdots & \Omega^{2}\left(X_{-2}^{-2}\right)\ar[l]\ar@{=}[d]\ar@{-->}[ul]\\
\cdots & \Omega\left(X_{-2}^{-1}\right)\ar@{=}[d]\ar[l] & \Omega\left(X_{-1}^{-1}\right)\ar@{=}[d]\ar[l]\ar@{-->}[ul]\\
\cdots & X_{-2}^{0}\ar[l] & \ar[l]X_{-1}^{0} & X_{0}^{0}\ar[l]\ar@{-->}[ul]
}
\]

Now, taking a very strict definition\footnote{our reason here for picking the strict definition is that this works
aims to synthesize stabilization and strict notions of algebra. It
seems likely however that the treatment of the notion should hold
up well in an $\left(\infty,2\right)$-categorical treatment - a topic
of future work.}, the category of sequential $\Omega$-spectra is the sub-category
of sequential spectra on spectrum objects for which the constituent
families of structure maps
\[
\left(\phi_{-n}:X_{-n}\longrightarrow\Omega X_{-\left(n+1\right)}\right)_{n\in\N}
\]
 are families of homeomorphisms. In terms of our functor $\sf X:\Z_{\leq0}\longrightarrow\CAT$
this is but $\underleftarrow{\mathsf{pslim}}\sf X$ inside of $\oplaxlim\sf X$.
But $\sf X$ is injective-canonical fibrant, so $\llim\sf X\liso\underleftarrow{\sf{pslim}}\sf X$.
Now, it's not hard to see that the diagram
\[
\xyR{1pc}\xyC{3pc}\xymatrix{\oplaxlim\sf X\ar[d]_{\wr} & \underleftarrow{\sf{pslim}}\sf X\ar[l] & \llim\sf X\ar[l]_{\sim}\ar[d]_{\wr}\\
\left[\Z_{\leq0},\CAT\right]\left(W,\sf X\right) &  & \ar[ll]^{\Delta}\left[\Z_{\leq0},\CAT\right]
}
\]
commutes, whence the left adjoint to $\Delta$ is the left adjoint
to the inclusion of sequential $\Omega$-spectra into sequential spectra,
which is to say that this left adjoint is spectrification.
\end{example}

Lastly, this $2$-categorical treatment of spectrification also applies
to the more sophisticated notion of coordinate-free spectra found
in \cite{ElmendorfKrizMandellMay}.
\begin{example}
\textbf{Coordinate-free spectra} - keeping $\sfS$ and $\sf S_{\bullet}$
as above and fix some $\mathscr{U}\liso\R^{\infty}$ (as vector spaces).
Let $\sf{FinSub}\left(\mathscr{U}\right)$ be the category of finite
dimensional sub-vector-spaces of $\mathscr{U}$. For each $V\in\sf{FinSub}\left(\mathscr{U}\right)$,
denote by $S^{V}$ the one-point compactification of $V$ and for
each $V\subset W\in\sf{FinSub}\left(\mathscr{U}\right)$, let $W-V$
denote the orthogonal complement of $V$ in $W$. A ($\mathscr{U}$
)-coordinate free spectrum object $X$ of $\sf{S_{\bullet}}$ is comprised
of:
\begin{itemize}
\item for each $V\in\sf{FinSub}\left(U\right)$, a pointed space $X_{V}$;
and
\item for each $V\longrightarrow W$ of $\sf{FinSub}\left(U\right)$, a
morphism $\sigma_{V,W}:X_{V}\longrightarrow\Omega^{W-V}\left(X_{W}\right)$
where $\Omega^{W-V}=\sf S_{\bullet}\left(S^{W-V},\_\right).$ 
\end{itemize}
Denote the category of such by $\sf{CFSp}\left(\mathscr{U},\sf S_{\bullet},\Omega^{W-V}\right)$

A coordinate-free spectrum is, as in the previous example, equivalent
to an oplax cone over a $1$-diagram in $\Cat$ - here the diagram
is as follows.
\[
\xyR{0pc}\xyC{5pc}\xymatrix{\boldsymbol{\Omega}_{\mathsf{CFS}}:\sf{FinSub}\left(\mathscr{U}\right)^{\op}\ar[r] & \Cat\\
V\ar@{|->}[r] & \sf S_{\bullet}\\
V\subset W\ar@{|->}[r] & \Omega^{W-V}
}
\]
In the case of coordinate-free spectra, the meaning of $\Omega$-spectra
is essentially the same as with sequential spectra: a coordinate free
$\Omega$-spectrum has all is constituent morphisms $\sigma_{V,W}:X_{V}\longrightarrow\Omega^{W-V}\left(X_{W}\right)$
being homeomorphisms. We see again that
\[
\xyR{1.5pc}\xyC{5pc}\xymatrix{\Omega\mhyphen\sf{CFSp}\left(\sf U,\sf S_{\bullet},S^{\left(\_\right)}\right)\ar@{^{(}->}[r]\ar[d]^{\wr} & \sf{CFSp}\left(\sf U,\sf S_{\bullet},S^{\left(\_\right)}\right)\ar[d]^{\wr}\\
\left[\bullet,\fX\right]_{\left[\Z_{\leq0},\Cat\right]}\ar@{^{(}->}[r]^{\t} & \left[\left(\Z_{\leq0}\downarrow\_\right)^{\op},\fX\right]_{\left[\Z_{\leq0},\Cat\right]}
}
\]
commutes with the so labeled functors being equivalences of categories.
As before, by Theorem \ref{thm: generalized-spectrification-strict-limit},
\[
\xyR{1.5pc}\xyC{5pc}\xymatrix{\Omega\mhyphen\sf{CFSp}\left(\sf U,\sf S_{\bullet},S^{\left(\_\right)}\right)\ar@{^{(}->}[r] & \sf{CFSp}\left(\sf U,\sf S_{\bullet},S^{\left(\_\right)}\right)}
\]
will have a left adjoint as:
\begin{itemize}
\item $\boldsymbol{\Omega}_{\mathsf{CFS}}$ is injective-canonical fibrant
in $\left[\sf{FinSub}\left(\mathscr{U}\right)^{\op},\CAT\right]$;
\begin{itemize}
\item Observe that picking $\sf{FinSub}\left(\mathscr{U}\right)^{\op}=\left(\sf{FinSub}\left(\mathscr{U}\right)^{\op}\right)^{-}$
defines a Reedy structure on $\sf{FinSub}\left(\mathscr{U}\right)^{\op}$
such that the injective-canonical and Reedy-canonical model structures
on $\left[\sf{FinSub}\left(\mathscr{U}\right)^{\op},\CAT\right]$
coincide. Thus to check fibrancy of $\boldsymbol{\Omega}_{\sf{CFS}}$
is to check that, for every $W\in\sf{FinSub}\left(\mathscr{U}\right)^{\op}$,
the map 
\[
\sf S_{\bullet}\longrightarrow\underset{W\hookleftarrow V:W\downarrow\sf{FinSub}\left(\mathscr{U}\right)^{\op}}{\llim}\sf S_{\bullet}
\]
is an iso-fibration. But since every square in $\sf{FinSub}\left(\mathscr{U}\right)^{\op}$
commutes, that limit is just the product 
\[
\prod_{V:\sf{codim}\mhyphen1\left(W\right)}\sf{S_{\bullet}}
\]
over all co-dimension $1$ subspaces of $W$ and the map is the one
induced by that universal property by the family of maps $\left(\Omega\right)_{V:\sf{codim}\mhyphen1\left(W\right)}$.
Indeed this is an iso-fibration as $\Omega$ is an iso-fibration.
\end{itemize}
\item the category $\sfS_{\bullet}$ admits all small colimits;
\item the functors $\Omega^{W-V}$ preserves filtered colimits, whence $\left(\mathsf{FinSub}\left(\mathscr{U}\right)^{\op}\downarrow U\right)^{\op}$-colimits;
and
\item for any $V\subset U$ functor 
\[
\xyR{1.5pc}\xyC{5pc}\xymatrix{\left(\mathsf{FinSub}\left(\mathscr{U}\right)^{\op}\downarrow U\right)^{\op}\ar[r] & \left(\mathsf{FinSub}\left(\mathscr{U}\right)^{\op}\downarrow V\right)^{\op}}
\]
 is final.
\end{itemize}
\end{example}

\section{Oplax limits and The Oplax weight $\left(\protect\sfJ\downarrow\_\right)^{\protect\op}:\protect\sfJ\protect\longrightarrow\protect\Cat$}

The material of this section is well known to the $2$-categorical
cognoscenti\footnote{read: ``not the author until the writing of this article''}.
We include it here for completeness, for notation, and for ease of
reference.

\subsection{Oplax natural transformations and the universal property of the oplax
limit}
\begin{defn}
Given $2$-categories $\sfJ$ and $\sfD$, and two $2$-functors $X,Y:\sfJ\longrightarrow\sfD$,
an oplax-natural transformation $\boldsymbol{\alpha}:X\overset{\mathsf{oplax}}{\Longrightarrow}Y$
is comprised of:
\begin{itemize}
\item a set of $1$-cells $\left\{ a_{j}:X_{j}\longrightarrow Y_{j}\right\} _{j\in\Ob\left(\sfJ\right)}$;
and
\item a $\Mor\left(\sfJ\right)$ indexed family of $2$-cells 
\[
\left\{ \vcenter{\vbox{\xyR{3pc}\xyC{3pc}\xymatrix{X_{i}\ar[r]^{X_{f}}\ar[d]_{a_{i}} & X_{j}\ar[d]^{a_{j}}\ar@{=>}[dl]|-{\alpha_{f}}\\
Y_{i}\ar[r]_{Y_{f}} & Y_{j}
}
}}\right\} _{f:i\rightarrow j\in\Mor\left(\sfJ\right)};
\]
 
\end{itemize}
such that, for every $i\overset{g}{\longrightarrow}j\overset{f}{\longrightarrow}k$
of $\sfJ$, we have an equality of $2$-cells 
\[
\vcenter{\vbox{\xyR{5pc}\xyC{5pc}\xymatrix{X_{i}\ar[r]^{X_{g}}\ar[d]_{a_{i}} & X_{j}\ar[d]|-{a_{j}}\ar@{=>}[dl]|-{\alpha_{g}}\ar[r]^{X_{f}} & X_{k}\ar[d]^{a_{k}}\ar@{=>}[dl]|-{\alpha_{f}}\\
Y_{i}\ar[r]_{Y_{g}} & Y_{j}\ar[r]_{Y_{f}} & Y_{k}
}
}}=\vcenter{\vbox{\xyR{5pc}\xyC{5pc}\xymatrix{X_{i}\ar[r]^{X_{g}}\ar[d]_{a_{i}} & X_{j}\ar[d]^{a_{j}}\ar@{=>}[dl]|-{\left(Y_{f}\underset{0}{\circ}\alpha_{f}\right)\underset{1}{\circ}\left(\alpha_{g}\underset{0}{\circ}X_{g}\right)}\\
Y_{i}\ar[r]_{Y_{g}} & Y_{j}
}
}}
\]
We define $\text{\ensuremath{\mathsf{Oplax}\left(X,Y\right)}}$ to
be the $1$-category with:
\begin{itemize}
\item oplax-natural transformations $\boldsymbol{\alpha}:X\overset{\mathsf{oplax}}{\Longrightarrow}Y$
as $0$-cells; and
\item modifications $\mathbf{c}:\boldsymbol{\alpha}\Rrightarrow\boldsymbol{\beta}$
as $1$-cells.
\end{itemize}
\end{defn}

Having recalled the notion of oplax-natural transformation, we may
recall the notion of the oplax limit.
\begin{defn}
For $2$-categories $\sfJ$ and $\sfD$ and a $2$-functor $X:\sfJ\longrightarrow\sfD$,
then the oplax\textbf{ limit }of the $2$-diagram $X$, denoted\footnote{in the usual abuse of notation}
$\underleftarrow{\sf{oplaxlim}}X$ , is comprised of:
\begin{itemize}
\item an object $\oplaxlim X$ of $\sfD$;
\end{itemize}
and an oplax cone into $X$, i.e.
\begin{itemize}
\item projection maps $\left\{ \widetilde{\pr_{i}}:\oplaxlim X\longrightarrow X_{i}\right\} _{i\in\Ob\left(\sf J\right)}$;
and
\item $2$-cells $\left\{ \pi_{f}:\widetilde{\pr_{i}}\Longrightarrow f\circ\widetilde{\pr_{j}}\right\} _{f:i\rightarrow j\in\Mor\left(\sf J\right)}$
\end{itemize}
with:
\begin{itemize}
\item for all $i\overset{f}{\longrightarrow}j\overset{g}{\longrightarrow}k$
of $\fJ$, the diagram of $2$-cells commuting, i.e. $\left(g\pi_{f}\right)\circ\pi_{g}=\pi_{g\circ f}$
\end{itemize}
enjoying the universal property:
\end{defn}

\begin{itemize}
\item post-composition with the oplax-cone $\left(\left(\widetilde{\pr_{i}}\right),\left(\pi_{f}\right)\right)$
induces an equivalence of categories.
\[
\text{\ensuremath{\sf{\sfD}}}\left(A,\oplaxlim X\right)\liso\mathsf{Oplax}\left(\sf J,\sfC\right)\left(\t A,X\right)
\]
 
\end{itemize}
\begin{rem}
It's worth noting that the suggestive (of the structure of a module)
criterion $\left(g\pi_{f}\right)\circ\pi_{g}=\pi_{g\circ f}$ in the
definition of an oplax cone can be born out (see \cite{Lack}).
\end{rem}

\subsection{\label{sec:Op-lax-limits-as-weighted-limits} The Oplax weight $\left(\protect\sfJ\downarrow\_\right)^{\protect\op}:\protect\sfJ\protect\longrightarrow\protect\Cat$}
\begin{defn}
Given a $2$-category $\sfJ$, call the $2$-functor
\[
\left(\sfJ\downarrow\_\right)^{\op}:\sfJ\longrightarrow\Cat
\]
the \textbf{oplax weight}.
\end{defn}

As one would hope, given the name, the oplax weight $\left(\sfJ\downarrow\_\right)^{\op}$
computes oplax limits. See that, for a given $2$-functor $X:\sfJ\longrightarrow\Cat$,
we've an oplax cone 
\[
\t\left[\sfJ,\Cat\right]\left(\left(\sfJ\downarrow\_\right)^{\op},X\right)\overset{\mathsf{oplax}}{\Longrightarrow}X
\]
comprised of $1$-cells 
\[
\left\{ \vcenter{\vbox{\xyR{0pc}\xyC{5pc}\xymatrix{\t\left[\sfJ,\Cat\right]\left(\left(\sfJ\downarrow\_\right)^{\op},X\right)\ar[r] & X_{j}\\
\left(\alpha:\left(\sfJ\downarrow\_\right)^{\op}\Rightarrow X\right)\ar@{|->}[r] & \alpha_{j}\left(\id_{j}:j\rightarrow j\right)\\
\left(\mathbf{c}:\alpha\Rrightarrow\beta\right)\ar@{|->}[r] & \mathbf{c}_{j,\id_{j}}:\alpha_{j}\left(\id_{j}\right)\rightarrow\beta_{j}\left(\id_{j}\right)
}
}}\right\} _{j\in\Ob\left(\sfJ\right)}
\]
and $2$-cells 
\[
\vcenter{\vbox{\xyR{3pc}\xyC{3pc}\xymatrix{\t\left[\sfJ,\Cat\right]\left(\left(\sfJ\downarrow\_\right)^{\op},X\right)\ar@{=}[r]\ar[d] & \t\left[\sfJ,\Cat\right]\left(\left(\sfJ\downarrow\_\right)^{\op},X\right)\ar[d]\ar@{=>}[dl]\\
X_{j}\ar[r] & X_{k}
}
}}
\]
whose component at some $\alpha:\left(\sfJ\downarrow\_\right)^{\op}\Rightarrow X$
is given by $1$-cells
\[
\xyR{1.5pc}\xyC{1.5pc}\xymatrix{\alpha:\left(\sfJ\downarrow\_\right)^{\op}\Rightarrow X\ar@{|..>}[dd]\ar@{|..>}[rr] &  & \alpha:\left(\sfJ\downarrow\_\right)^{\op}\Rightarrow X\ar@{|..>}[d]\\
 &  & \alpha_{k}\left(\id_{k}\right)\ar[dl]|-{\alpha_{k}\left(f\rightarrow k\right)}\\
\alpha_{j}\left(\id_{j}\right)\ar@{|..>}[r] & X_{f}\circ\alpha_{j}\left(\id_{j}\right)=\alpha_{k}\left(j\rightarrow k\right)
}
\]
where by $\alpha_{k}\left(f\rightarrow k\right)$ we mean the application
of $\alpha_{k}$ to the morphism of $\left(\sfJ\downarrow k\right)^{\op}$
corresponding to the following triangle.
\[
\xyR{1.5pc}\xyC{1.5pc}\xymatrix{j\ar[rr]^{f}\ar[dr]_{f} &  & k\ar@{=}[dl]\\
 & k
}
\]
 The oplax cone thus described endows $\left[\sfJ,\Cat\right]\left(\left(\sfJ\downarrow\_\right)^{\op},X\right)$
with the universal property of the oplax limit over $X$.

What remains a bit opaque however is how, precisely, we translate
between oplax-natural transformations 
\[
\text{\ensuremath{\boldsymbol{\alpha}:X\overset{\mathsf{oplax}}{\Longrightarrow}Y}}
\]
 and functors
\[
\oplaxlim\boldsymbol{\alpha}:\left[\sfJ,\Cat\right]\left(\left(\sfJ\downarrow\_\right)^{\op},X\right)\longrightarrow\left[\sfJ,\Cat\right]\left(\left(\sfJ\downarrow\_\right)^{\op},Y\right)
\]
Every $\beta:\left(\sfJ\downarrow\_\right)^{\op}\Longrightarrow X$
is comprised of a natural family of functors\footnote{The idiosyncrasy of using super-script here will make something a
little more natural in our examples section, so those irked by the
choice should rest assured that the author has not chosen this notation
just to be a pest - and is instead a pest with a mission.} 
\[
\left\{ \beta^{j}:\left(\sfJ\downarrow j\right)^{\op}\longrightarrow X_{j}\right\} _{j\in\Ob\left(\sfJ\right)}
\]
We will describe $\oplaxlim\boldsymbol{\alpha}$ by describing its
action on the objects of $\left[\sfJ,\Cat\right]\left(\left(\sfJ\downarrow\_\right)^{\op},X\right)$,
the natural transformations $\beta:\left(\sfJ\downarrow\_\right)^{\op}\Longrightarrow X$,
component-wise. We set $\oplaxlim\boldsymbol{\alpha}\left(\beta\right)$
to the natural transformation comprised of the family of functors
$\left\{ \widetilde{\boldsymbol{\alpha}}\left(\beta^{j}\right)\right\} _{j\in\Ob\left(\sfJ\right)}$ 
\begin{itemize}
\item whose actions on objects are given
\[
\vcenter{\vbox{\xyR{0pc}\xyC{3pc}\xymatrix{\widetilde{\boldsymbol{\alpha}}\left(\beta^{j}\right):\left(\sfJ\downarrow j\right)^{\op}\ar[r] & Y_{j}\\
\left(\overset{q}{p\rightarrow j}\right)\ar@{|->}[r] & Y_{q}\circ\alpha_{p}\circ\beta^{p}\left(\id_{p}\right)
}
}}
\]
and
\item which sends morphisms $\left(k\rightarrow j\right):\left(p\overset{q}{\rightarrow}j\right)\rightarrow\left(\ell\overset{q\circ k}{\longrightarrow}j\right)$
corresponding to triangles 
\[
\xyR{1.5pc}\xyC{1.5pc}\xymatrix{\ell\ar[rr]^{k}\ar[dr]_{q\circ k} &  & p\ar[dl]^{q}\\
 & j
}
\]
to the compositions 
\[
\xyR{1.5pc}\xyC{5pc}\xymatrix{Y_{q}\alpha_{p}\beta^{p}\left(\id_{p}\right)\ar[r]^{Y_{q}\alpha_{p}\beta^{p}\left(q\rightarrow p\right)} & Y_{q}\alpha_{p}\beta^{p}\left(\ell\xrightarrow{k}p\right)\ar@{=}[d]\\
 & Y_{q}\alpha_{p}X_{k}\beta^{\ell}\left(\id_{\ell}\right)\ar[r]^{Y_{q}\alpha_{k,\beta^{\ell}\left(\id_{\ell}\right)}\left(\beta^{\ell}\left(\id_{\ell}\right)\right)} & Y_{q}Y_{k}\alpha_{\ell}\beta^{\ell}\left(\id_{\ell}\right)\ar@{=}[d]\\
 &  & Y_{qk}\alpha_{\ell}\beta^{\ell}\left(\id_{\ell}\right)
}
\]
\end{itemize}
\begin{rem}
Those carefully type-checking the composition above may find the following
$2$-diagram useful. 
\[
\xyR{1pc}\xyC{3pc}\xymatrix{\left(\sfJ\downarrow\ell\right)^{\op}\ar[rr]\sp(0.3){\left(\sfJ\downarrow k\right)^{\op}}\ar[dr]_{\left(\sfJ\downarrow q\circ k\right)^{\op}}\ar[ddd]^{\beta^{\ell}} &  & \left(\sfJ\downarrow p\right)^{\op}\ar[dl]^{\left(\sfJ\downarrow q\right)^{\op}}\ar[ddd]^{\beta^{p}}\\
 & \left(\sfJ\downarrow j\right)^{\op}\ar[ddd]^{\beta^{j}}\\
\\
X_{\ell}\ar[rr]\sp(0.3){X_{k}}\ar[dr]_{X_{qk}}\ar[ddd]_{\alpha_{\ell}} &  & X_{p}\ar[dl]^{X_{q}}\ar[ddd]_{\alpha_{p}}\ar@{=>}[dddll]\\
 & X_{j}\ar[ddd]_{\alpha_{j}}\ar@{=>}[ddl]\ar@{=>}[ddr]\\
\\
Y_{\ell}\ar[rr]\sp(0.3){Y_{k}}\ar[dr]_{Y_{qk}} &  & Y_{p}\ar[dl]^{Y_{q}}\\
 & Y_{j}
}
\]
\end{rem}

\section{Extensions of Monads}
\begin{lem}
\label{lem:Extension-of-monads}Given categories $\sfA$ and $\sfB$,
with $\sfB$ co-complete, and a functor $N:\sfA\longrightarrow\sfB$,
then the composition 
\[
\Lan_{N}\left(N\circ\_\right):\left[\sfA,\sfA\right]\xrightarrow{\left[\sfA,N\right]}\left[\sfA,\sfB\right]\xrightarrow{\Lan_{N}\left(\_\right)}\left[\sfB,\sfB\right]
\]
is monoidal. If moreover:
\begin{itemize}
\item $N$ is full-and-faithful and dense; and
\item $\Lan_{N}\left(N\circ\_\right):\left[\sfA,\sfA\right]\longrightarrow\left[\sfB,\sfB\right]_{\cc}\hookrightarrow\left[\sfB,\sfB\right]$;
\end{itemize}
then $\text{\ensuremath{\Lan}}_{N}\left(N\circ\_\right)$ is strong
monoidal.
\end{lem}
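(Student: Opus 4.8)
The plan is to produce the canonical oplax monoidal structure on $\Phi:=\Lan_{N}\left(N\circ\_\right)\colon\left[\sfA,\sfA\right]\longrightarrow\left[\sfB,\sfB\right]$ (both endofunctor categories taken monoidal under composition) directly from the universal property of $\Lan_{N}$, and then to check that the two extra hypotheses force its structure cells to be invertible, i.e.\ that $\Phi$ is strong monoidal. Recall that $\Lan_{N}\colon\left[\sfA,\sfB\right]\to\left[\sfB,\sfB\right]$ is left adjoint to restriction along $N$, the functor $H\mapsto H\circ N$; write $\iota_{F}\colon F\Rightarrow\Lan_{N}\left(F\right)\circ N$ for the unit and $\epsilon_{H}\colon\Lan_{N}\left(H\circ N\right)\Rightarrow H$ for the counit of this adjunction, and recall that the adjoint transpose of a $2$-cell $\gamma\colon F\Rightarrow H\circ N$ is $\widehat{\gamma}=\epsilon_{H}\circ\Lan_{N}\left(\gamma\right)\colon\Lan_{N}\left(F\right)\Rightarrow H$.

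First I would write down the structure cells. Since $N=\id_{\sfB}\circ N$, the counit $\epsilon_{\id_{\sfB}}\colon\Lan_{N}\left(N\right)=\Phi\left(\id_{\sfA}\right)\Rightarrow\id_{\sfB}$ serves as the unit cell. For the multiplication, given $S,T\colon\sfA\to\sfA$, let $\gamma_{S,T}\colon NST\Rightarrow\Phi\left(S\right)\circ\Phi\left(T\right)\circ N$ be the pasting $\left(\Phi\left(S\right)\cdot\iota_{NT}\right)\circ\left(\iota_{NS}\cdot T\right)$, and set $m_{S,T}:=\widehat{\gamma_{S,T}}\colon\Phi\left(S\circ T\right)\Rightarrow\Phi\left(S\right)\circ\Phi\left(T\right)$. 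I would then verify the oplax monoidal coherence axioms, namely the two unit triangles and the associativity square for $m$. In each case, because a $2$-cell out of a left Kan extension $\Lan_{N}\left(F\right)$ is determined by its composite with $\iota_{F}$, the axiom reduces to an equality of $2$-cells between functors out of $\sfA$, which then follows from the triangle identities of the adjunction, naturality of $\iota$, and the middle-four interchange law. This is the routine but bookkeeping-heavy step.

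For the strong monoidal claim I would isolate two sub-facts. \emph{(i)}~Since $N$ is full-and-faithful and $\sfB$ is co-complete, restricting the (pointwise) left Kan extension $\Lan_{N}\left(F\right)$ back along $N$ recovers $F$, so $\iota_{F}\colon F\Rightarrow\Lan_{N}\left(F\right)\circ N$ is invertible for \emph{every} $F\colon\sfA\to\sfB$ (concretely, each comma category $\left(N\downarrow Na\right)$ has a terminal object). \emph{(ii)}~Since $N$ is dense, $\id_{\sfB}=\Lan_{N}\left(N\right)$ with $\epsilon_{\id_{\sfB}}$ the comparison isomorphism; combining this with the fact that co-continuous functors preserve left Kan extensions gives, for every co-continuous $H\colon\sfB\to\sfB$, that $\epsilon_{H}$ is invertible, since it factors as $\Lan_{N}\left(H\circ N\right)\Rightarrow H\circ\Lan_{N}\left(N\right)\xRightarrow{H\epsilon_{\id_{\sfB}}}H$ with the first arrow the invertible comparison witnessing that $H$ preserves the Kan extension. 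Now $\epsilon_{\id_{\sfB}}$ is invertible by \emph{(ii)}; and by the second hypothesis $\Phi\left(S\right)$ and $\Phi\left(T\right)$, hence $\Phi\left(S\right)\circ\Phi\left(T\right)$, lie in $\left[\sfB,\sfB\right]_{\cc}$, so $\epsilon_{\Phi\left(S\right)\circ\Phi\left(T\right)}$ is invertible by \emph{(ii)}; meanwhile $\gamma_{S,T}$ is a composite of whiskerings of the isomorphisms from \emph{(i)}, hence invertible, hence so is $\Lan_{N}\left(\gamma_{S,T}\right)$. Therefore $m_{S,T}=\epsilon_{\Phi\left(S\right)\circ\Phi\left(T\right)}\circ\Lan_{N}\left(\gamma_{S,T}\right)$ is a composite of isomorphisms, and all the structure cells of $\Phi$ are invertible: $\Phi$ is strong monoidal. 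In particular $\Phi$ is lax monoidal, which is precisely what promotes the monad $T$ on $\sfA$ to the monad $\Lan_{N}\left(N\circ T\right)$ on $\sfB$ with no extra conditions imposed on its structure $2$-cells.

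The step I expect to be the main obstacle is the coherence verification for the oplax structure: spelling out $m$ and $\epsilon_{\id_{\sfB}}$ carefully enough that the pentagon and the two unit diagrams can be pushed, via the ``determined by $\iota$'' principle, onto the triangle identities. A minor point worth flagging is the variance of the structure cells: the canonical ones run $\Phi\left(\id_{\sfA}\right)\Rightarrow\id_{\sfB}$ and $\Phi\left(S\circ T\right)\Rightarrow\Phi\left(S\right)\circ\Phi\left(T\right)$ --- as they must, a left adjoint being oplax rather than lax monoidal for composition --- so the unqualified word ``monoidal'' in the statement is to be read as asserting this canonical oplax structure, which the further hypotheses then upgrade to a strong (hence two-sided) one.
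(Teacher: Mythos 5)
Your argument is correct and follows essentially the same route as the paper's (very terse) proof: density gives invertibility of the unit constraint, full-and-faithfulness gives invertibility of the Kan-extension units $\iota_{NS}$ (this is the paper's step $N\circ F=\Lan_{N}\left(N\circ F\right)\circ N$), and co-continuity of the image gives preservation of left Kan extensions, i.e.\ invertibility of your $\epsilon_{H}$ for co-continuous $H$, so every structure cell is a composite of isomorphisms. Your explicit observation that the canonical structure on $\Lan_{N}\left(N\circ\_\right)$ is \emph{oplax} monoidal --- with cells $\Phi\left(\id_{\sfA}\right)\Rightarrow\id_{\sfB}$ and $\Phi\left(S\circ T\right)\Rightarrow\Phi\left(S\right)\circ\Phi\left(T\right)$, there being no canonical comparison $\id_{\sfB}\Rightarrow\Lan_{N}\left(N\right)$ absent density --- is a worthwhile sharpening of the paper's unqualified ``monoidal'' and ``well known''.
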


\begin{proof}
The first part is well known.\footnote{\begin{proof}
although this author with his woeful preparation hadn't thought about
it much before this work was compiled in the context of fear, paranoia,
and incipient unemployment.
\end{proof}
} The second part is a weakening of a Math-overflow post by di-Liberti
(https://mathoverflow.net/questions/321909/extending-monads-along-dense-functors).

Since $N$ is dense if and only if $\Lan_{N}\left(N\right)=\id_{\sfA}$,
thus for dense $N$, $\Lan_{N}\left(N\circ\_\right)$ strictly preserves
the unit. Since:
\begin{itemize}
\item $N$ is full-and-faithful, $N\circ F=\Lan_{N}\left(N\circ F\right)\circ N$;
and
\item since $\Lan_{N}\left(N\circ F\right)$ is co-continuous for all $F:\sfA\rightarrow\sfA$,
$\Lan_{N}\left(N\circ F\right)$ preserves left Kan extensions, so
for any $F\circ G$ in $\left[\sfA,\sfA\right]$ we have that $\Lan_{N}\left(N\circ F\right)\circ\Lan_{N}\left(N\circ G\right)=\Lan_{N}\left(\Lan_{N}\left(N\circ F\right)\circ N\circ G\right)$;
\end{itemize}
thus 
\begin{eqnarray*}
\Lan_{N}\left(N\circ F\circ G\right) & = & \Lan_{N}\left(\Lan_{N}\left(N\circ F\right)\circ N\circ G\right)\\
 & = & \Lan_{N}\left(N\circ F\right)\circ\Lan_{N}\left(N\circ G\right).
\end{eqnarray*}
\end{proof}

\section{On Limits in Comma Categories}
\begin{lem}
\label{lem: Commas and the existence of limits therein}Let $\left(\sfV,\otimes,\bfOne\right)$
be a symmetric monoidal category and let $\sfA,\sfB,$ and $\sfC$
be $\sfV$-categories powered in $\sfV$. Then, given a comma category
\[
\xyR{1.5pc}\xyC{1.5pc}\xymatrix{ & F\downarrow G\ar[dl]_{\pr_{1}}\ar[dr]^{\pr_{2}}\\
\sfA\ar@{=>}[rr]\ar[dr]_{L} &  & \sfB\ar[dl]^{R}\\
 & \sfC
}
\]
where $G$ is continuous and $\sfC$ is complete\footnote{this can surely be weakened, but to do so in the statement of the
lemma would make it more onerous to read than this author is willing
to write.} it follows that, if for some diagram 
\[
\left(a_{\bullet},f_{\bullet}:L\left(a_{\bullet}\right)\longrightarrow R\left(b_{\bullet}\right),b_{\bullet}\right):\mathsf{I}\longrightarrow L\downarrow R
\]
 and some weight 
\[
W:\mathsf{I}\longrightarrow\sfV
\]
 the weighted limits $\llim^{W}a_{\bullet}$ and $\llim^{W}b_{\bullet}$
exist in $\sfA$ and $\sfB$ respectively, then 
\[
\left(\llim^{W}a_{\bullet},\left(\widetilde{f}_{\bullet}\right),\llim^{W}b_{\bullet}\right)
\]
 enjoys the universal property of the weighted limit $\llim^{W}\left(a_{\bullet},f_{\bullet},b_{\bullet}\right)$
- with $\left(\widetilde{f}_{\bullet}\right)$ induced from the universal
property\footnote{in a fashion made more explicit in the proof} enjoyed
by $\llim^{W}R\left(b_{\bullet}\right)=R\left(\llim^{W}b_{\bullet}\right)$
in $\sfC$.
\end{lem}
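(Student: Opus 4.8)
The plan is to run the standard argument by which comma categories inherit limits, adapted to the enriched weighted setting. First I would recall that an object of $L\downarrow R$ is a triple $(x,h\colon Lx\to Ry,y)$ and that, since $\sfC$ is powered in $\sfV$ (and $\sfV$ has the requisite pullbacks), the hom-objects of $L\downarrow R$ are computed as pullbacks in $\sfV$ of $\sfA(x,x')\to\sfC(Lx,Ry')\leftarrow\sfB(y,y')$, the two legs using $h'$ and $h$ respectively. Given the diagram $(a_{\bullet},f_{\bullet},b_{\bullet})\colon\mathsf{I}\to L\downarrow R$ and the weight $W\colon\mathsf{I}\to\sfV$, I would take the candidate limit to be $(\llim^{W}a_{\bullet},\widetilde{f},\llim^{W}b_{\bullet})$, where $\widetilde{f}\colon L(\llim^{W}a_{\bullet})\to R(\llim^{W}b_{\bullet})$ is produced as follows. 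The limiting $W$-cone $(\pi_{i}\colon\llim^{W}a_{\bullet}\to a_{i})$ in $\sfA$, after applying $L$ and post-composing with the structure maps $f_{i}\colon La_{i}\to Rb_{i}$, yields a $W$-cone on $L(\llim^{W}a_{\bullet})$ over the diagram $Rb_{\bullet}\colon\mathsf{I}\to\sfC$; here one uses $\sfV$-naturality of $f_{\bullet}$, i.e. commutativity of the comma squares, to see it really is a cone. Since $R$ is continuous and $\sfC$ complete we have $\llim^{W}(Rb_{\bullet})=R(\llim^{W}b_{\bullet})$, so the universal property of that limit delivers the unique $\widetilde{f}$ with $R(p_{i})\circ\widetilde{f}=f_{i}\circ L\pi_{i}$ for all $i$, where the $p_{i}$ are the projections of $\llim^{W}b_{\bullet}$.

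Next I would verify the universal property. A $W$-cone from a test object $(x,h,y)$ into the diagram consists of a $W$-cone $\alpha\colon x\to a_{\bullet}$ in $\sfA$, a $W$-cone $\beta\colon y\to b_{\bullet}$ in $\sfB$, and, for each $i$, the compatibility $f_{i}\circ L\alpha_{i}=R\beta_{i}\circ h$. The universal properties of $\llim^{W}a_{\bullet}$ and $\llim^{W}b_{\bullet}$ produce unique $\overline{\alpha}\colon x\to\llim^{W}a_{\bullet}$ and $\overline{\beta}\colon y\to\llim^{W}b_{\bullet}$ factoring $\alpha$ and $\beta$. It then remains to check that $(\overline{\alpha},\overline{\beta})$ underlies a morphism of $L\downarrow R$, i.e. that $\widetilde{f}\circ L\overline{\alpha}=R\overline{\beta}\circ h$; both are maps $Lx\to R(\llim^{W}b_{\bullet})$, and since this target is a limit in $\sfC$ and $R$ is continuous it suffices to compose with each $R(p_{i})$, whereupon the left side becomes $f_{i}\circ L\pi_{i}\circ L\overline{\alpha}=f_{i}\circ L\alpha_{i}$ and the right side becomes $R(p_{i}\overline{\beta})\circ h=R\beta_{i}\circ h$; these agree by the cone compatibility. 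Uniqueness of the induced map into the candidate follows from that of $\overline{\alpha}$ and $\overline{\beta}$, since $\pr_{1}$ and $\pr_{2}$ jointly detect it. To upgrade this from underlying objects to the genuinely $\sfV$-enriched statement I would rerun exactly this bookkeeping at the level of hom-objects, using that the pullback description of $(L\downarrow R)(-,-)$ together with continuity of $\sfA(x,-)$, $\sfB(y,-)$, $\sfC(Lx,-)$ turns "$W$-cone'' and "factorization'' into the appropriate $\sfV$-valued ends; equivalently, one appeals to the fact that a $\sfV$-weighted limit is detected representably.

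The main obstacle I anticipate is not the skeleton of the argument — which is routine — but the enriched bookkeeping around the comma construction: making precise that $L\downarrow R$ is a $\sfV$-category with powers computed from those of $\sfA,\sfB,\sfC$ (which is where the "powered in $\sfV$'' hypothesis and the completeness of $\sfC$ are genuinely used), and checking that the comparison $\widetilde{f}$ assembles $\sfV$-naturally in $i$ rather than merely on underlying objects. I would therefore concentrate the care on the hom-object pullback and on the $\sfV$-naturality of the cone built from $(L\pi_{i},f_{i})$; once that is in hand, the factorizations through $\llim^{W}a_{\bullet}$ and $\llim^{W}b_{\bullet}$ and the reduction of the commuting square to the projections $R(p_{i})$ are formal.
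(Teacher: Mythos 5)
Your proposal is correct and follows essentially the same route as the paper: construct $\widetilde{f}$ by composing $L$ of the limiting cone with $f_{\bullet}$ and factoring through $R\left(\llim^{W}b_{\bullet}\right)=\llim^{W}R\left(b_{\bullet}\right)$ via continuity of $R$, then deduce the universal property from those of $\llim^{W}a_{\bullet}$ and $\llim^{W}b_{\bullet}$. The only difference is presentational: the paper first invokes powering to reduce to conical limits and then asserts the universal property ``follows immediately,'' whereas you carry the weight $W$ throughout and spell out the cone-decomposition and joint-detection argument that the paper leaves implicit.
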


\begin{proof}
We've assumed $\sfA,\sfB$, and $\sfC$ to be $\sfV$-powered. It
therefore suffices to consider conical limits.

Now, the cone 
\[
L\left(\alpha\right):L\left(\llim a_{\bullet}\right)\Longrightarrow L\left(a_{\bullet}\right):\mathsf{I}\longrightarrow\sfC
\]
and the natural transformation 
\[
f_{\bullet}:L\left(a_{\bullet}\right)\Longrightarrow R\left(b_{\bullet}\right)
\]
compose to cone $\widetilde{f}_{\bullet}:=f_{\bullet}\circ\alpha:L\left(\llim a_{\bullet}\right)\Longrightarrow R\left(b_{\bullet}\right)$.
This cone induces, by the universal property of $\llim R\left(b_{\bullet}\right)$
enjoyed by $R\left(\llim b_{\bullet}\right)$ as $R$ is continuous,
a morphism in $\sfC$
\[
\left(\widetilde{f_{\bullet}}\right):L\left(\llim a_{\bullet}\right)\longrightarrow R\left(\llim b_{\bullet}\right)
\]
whence $\left(\llim a_{\bullet},\left(\widetilde{f_{\bullet}}\right),\llim b_{\bullet}\right)$
is an object of $L\downarrow R$. That $\left(\llim a_{\bullet},\left(\widetilde{f_{\bullet}}\right),\llim b_{\bullet}\right)$
enjoys the universal property of the limit in $L\downarrow R$ follows
immediately from the that of $\llim a_{\bullet}$ and $\llim b_{\bullet}$
in $\sfA$ and $\sfB$ respectively.
\end{proof}
\begin{cor}
\label{cor:creativity-from-continuity} Let $\left(\sfV,\otimes,\bfOne\right)$
be a symmetric monoidal category and let $\sfB,$ and $\sfC$ be $\sfV$-categories
powered in $\sfV$, with $\sfC$ complete. Then, given a comma category
\[
\xyR{1.5pc}\xyC{1.5pc}\xymatrix{ & \sfC\downarrow R\ar[dl]_{\pr_{1}}\ar[dr]^{\pr_{2}}\\
\sfC\ar@{=>}[rr]\ar@{=}[dr]_{\id_{\sfC}} &  & \sfB\ar[dl]^{R}\\
 & \sfC
}
\]
where $R:\sfB\longrightarrow\sfC$ preserves all weighted limits,
then $\sfC\downarrow R\longrightarrow\sfC\times\sfB$ creates all
weighted limits.
\end{cor}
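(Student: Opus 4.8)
The plan is to obtain this from Lemma~\ref{lem: Commas and the existence of limits therein} in the special case $\sfA=\sfC$, $L=\id_{\sfC}$ (the hypotheses of that lemma hold, since $R$ is continuous because it preserves all weighted limits, and $\sfC$ is complete), and then to strengthen its conclusion from ``the weighted limit exists and is computed leg-wise'' to the assertion that $(\pr_{1},\pr_{2})$ \emph{creates} those limits. First I would record the preliminary that, since $R$ preserves all weighted limits it in particular preserves powers, so $\sfC\downarrow R$ is again $\sfV$-powered, with $(c\xrightarrow{\phi}Rb)^{v}\liso(c^{v}\to R(b^{v}))$, and $(\pr_{1},\pr_{2})\colon\sfC\downarrow R\to\sfC\times\sfB$ preserves these powers; as powers in $\sfC\times\sfB$ are taken leg-wise, the same reduction already used inside the proof of Lemma~\ref{lem: Commas and the existence of limits therein} shows it suffices to prove that $(\pr_{1},\pr_{2})$ creates \emph{conical} limits.

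Next I would unwind what creation of a conical limit demands. Let $(a_{\bullet},f_{\bullet}\colon a_{\bullet}\to Rb_{\bullet},b_{\bullet})\colon\mathsf{I}\to\sfC\downarrow R$ be a diagram whose image $(a_{\bullet},b_{\bullet})$ admits a limit in $\sfC\times\sfB$; since limits in a product are computed componentwise, this says exactly that $\llim a_{\bullet}$ and $\llim b_{\bullet}$ exist in $\sfC$ and $\sfB$, with limit cones $\alpha$ and $\beta$. Lemma~\ref{lem: Commas and the existence of limits therein} then already produces a limit $(\llim a_{\bullet},\widetilde f,\llim b_{\bullet})$ of the diagram in $\sfC\downarrow R$, where $\widetilde f\colon\llim a_{\bullet}\to R(\llim b_{\bullet})$ is the comparison map induced, through the canonical iso $R(\llim b_{\bullet})\liso\llim Rb_{\bullet}$, by the cone $(f_{i}\circ\alpha_{i})_{i}$; its image under $(\pr_{1},\pr_{2})$ is the pair of limit cones $(\alpha,\beta)$, so the functor preserves this limit.

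The one remaining point is uniqueness of the lift, equivalently reflection. An object of $\sfC\downarrow R$ lying over $(\llim a_{\bullet},\llim b_{\bullet})$ is precisely a morphism $\phi\colon\llim a_{\bullet}\to R(\llim b_{\bullet})$, and to promote the product limit cone $(\alpha,\beta)$ to a cone in $\sfC\downarrow R$ with that apex amounts, by the definition of a morphism of the comma category, exactly to the system of equalities $R(\beta_{i})\circ\phi=f_{i}\circ\alpha_{i}$ for all $i\in\mathsf{I}$. Since $R$ sends the limit cone $\beta$ to a limit cone on $R(\llim b_{\bullet})$, there is a unique $\phi$ satisfying this system, namely $\phi=\widetilde f$. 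Hence the product limit cone has a unique lift, which by the previous step is a limit cone; that is creation.

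I do not anticipate a genuine obstacle here: the only substantive input is the single identification $R(\llim b_{\bullet})\liso\llim Rb_{\bullet}$, which simultaneously makes $\widetilde f$ well defined (existence of the lift) and forces its uniqueness (reflection), and every comma-category manipulation that is needed has already been carried out in Lemma~\ref{lem: Commas and the existence of limits therein}. The mild bookkeeping care will be in the power-preservation step and in matching conventions for ``creates'' (preservation together with reflection), which is exactly the form in which the corollary is invoked in the limit-reflection proposition.
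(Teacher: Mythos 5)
Your proposal is correct and follows essentially the same route as the paper's proof: reduce to conical limits via powering, invoke Lemma~\ref{lem: Commas and the existence of limits therein} with $L=\id_{\sfC}$ to get the leg-wise formula, and then deduce preservation and reflection (the paper) or, equivalently, unique lifting of limit cones (your phrasing) from the fact that $R$ carries the limit cone $\beta$ to a limit cone, which pins down the comparison map $\widetilde f$. Your explicit uniqueness argument for the lift is a slightly more careful rendering of the same content the paper packages as its ``Reflection'' step.
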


\begin{proof}
Again, by powering, it suffices to consider conical limits, and since
$R$ preserves all weighted limits, Lemma \ref{lem: Commas and the existence of limits therein}
applies and we have a formula for conical limits in $\sfC\downarrow R$.
\end{proof}
\begin{elabeling}{00.00.0000}
\item [{Preservation}] from the formula we have that, when $\llim b_{\bullet}$
exists in $\sfB$,
\[
\llim\left(c_{\bullet},f_{\bullet}:c_{\bullet}\rightarrow R\left(b_{\bullet}\right),b_{\bullet}\right)=\left(\llim c_{\bullet},\left(\widetilde{f_{\bullet}}\right),\llim b_{\bullet}\right)
\]
It is immediate then that $\sfC\downarrow R\longrightarrow\sfC\times\sfB$
preserves limits.
\item [{Reflection}] since limits and colimits in products of categories
are computed factor-wise, if a cone
\[
\left(c,f:c\rightarrow R\left(b\right),b\right)\Longrightarrow\left(c_{\bullet},f_{\bullet}:c_{\bullet}\rightarrow R\left(b_{\bullet}\right),b_{\bullet}\right)
\]
 in $\sfC\downarrow R$ passes to a limiting cone 
\[
\left(c,b\right)\Longrightarrow\left(c_{\bullet},b_{\bullet}\right)
\]
in $\sfC\times\sfB$, then $c\Longrightarrow c_{\bullet}$ and $b\Longrightarrow b_{\bullet}$
are limiting. But then, since $R$ preserves limits, it follows from
the formula that $\left(c,f:c\rightarrow R\left(b\right),b\right)\Longrightarrow\left(c_{\bullet},f_{\bullet}:c_{\bullet}\rightarrow R\left(b_{\bullet}\right),b_{\bullet}\right)$
is limiting.
\end{elabeling}
\begin{cor}
\label{cor:creativity-from-creativity}Let $\left(\sfV,\otimes,\bfOne\right)$
be a symmetric monoidal category and let $\sfB,$ and $\sfC$ be $\sfV$-categories
powered in $\sfV$, with $\sfC$ complete. Then, given a comma category
\[
\xyR{1.5pc}\xyC{1.5pc}\xymatrix{ & \sfC\downarrow R\ar[dl]_{\pr_{1}}\ar[dr]^{\pr_{2}}\\
\sfC\ar@{=>}[rr]\ar@{=}[dr]_{\id_{\sfC}} &  & \sfB\ar[dl]^{R}\\
 & \sfC
}
\]
where $R:\sfB\longrightarrow\sfC$ creates all weighted limits, the
functor $\sfC\downarrow R\longrightarrow\sfC^{\bfTwo}$ creates all
weighted limits.
\end{cor}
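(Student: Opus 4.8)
The plan is to realize the functor $\sfC\downarrow R\longrightarrow\sfC^{\bfTwo}$ as a composite of functors to which Corollary~\ref{cor:creativity-from-continuity} already applies, and then to cancel one of them formally. As in that corollary, since $\sfA,\sfB,\sfC$ are $\sfV$-powered I would first reduce the whole statement to the case of conical limits (the weighted case following by powering). Next, since $\sfC$ is complete, $R(D)$ has a limit in $\sfC$ for \emph{every} diagram $D$ in $\sfB$, so the hypothesis ``$R$ creates all weighted limits'' already forces ``$R$ preserves all weighted limits'', and Corollary~\ref{cor:creativity-from-continuity} then gives that $\mathrm{pr}:=(\mathrm{pr}_{1},\mathrm{pr}_{2})\colon\sfC\downarrow R\longrightarrow\sfC\times\sfB$ creates all weighted limits. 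I would then observe that $\sfC^{\bfTwo}$ is nothing but the comma category $\sfC\downarrow\mathrm{id}_{\sfC}$, and that $\mathrm{id}_{\sfC}$ trivially preserves all weighted limits, so a \emph{second} application of Corollary~\ref{cor:creativity-from-continuity} (with $R$ replaced by $\mathrm{id}_{\sfC}$) shows that the ``domain and codomain'' functor $E\colon\sfC^{\bfTwo}\longrightarrow\sfC\times\sfC$ creates all weighted limits; in particular $\sfC^{\bfTwo}$ is complete and $E$ preserves every limit that exists in $\sfC^{\bfTwo}$.

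The key bookkeeping point is the on-the-nose identity $E\circ U=(\mathrm{id}_{\sfC}\times R)\circ\mathrm{pr}$, where $U\colon\sfC\downarrow R\to\sfC^{\bfTwo}$ is the functor under study, sending $(c,f\colon c\to R(b),b)$ to the arrow $c\xrightarrow{\,f\,}R(b)$. Since $\mathrm{id}_{\sfC}$ creates limits and $R$ creates limits by hypothesis, the product $\mathrm{id}_{\sfC}\times R\colon\sfC\times\sfB\to\sfC\times\sfC$ creates all weighted limits (limits in a product being computed factor-wise), and creation of limits is stable under composition, so $(\mathrm{id}_{\sfC}\times R)\circ\mathrm{pr}=E\circ U$ creates all weighted limits.

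It then remains to cancel $E$ on the left: I claim that if $E$ is faithful and both $E$ and $E\circ U$ create weighted limits, then so does $U$. Given a conical diagram $D$ in $\sfC\downarrow R$ for which $UD$ admits a limit with limit cone $\kappa$, the fact that $E$ preserves that limit makes $E\kappa$ a limit cone over $EUD$; since $E\circ U$ creates limits and $EUD$ has a limit, there is a unique cone $\mu$ over $D$ with $EU\mu=E\kappa$, and $\mu$ is limiting. Faithfulness of $E$ (which is clear for the domain/codomain functor) upgrades $E(U\mu)=E\kappa$ to $U\mu=\kappa$, and the same faithfulness shows $\mu$ is the unique cone over $D$ with $U\mu=\kappa$; hence $U$ creates conical, and therefore all weighted, limits. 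The argument is entirely formal, so there is no real obstacle; the only things one must get right are ``creation of limits composes'' and ``a limit-creating functor with complete codomain preserves every limit that exists in its domain'', both of which are standard. If one prefers to avoid the cancellation lemma, one can argue directly from Lemma~\ref{lem: Commas and the existence of limits therein}: a limit of $UD$ in $\sfC^{\bfTwo}$ is computed pointwise, hence forces $\llim c_{\bullet}$ and $\llim R(b_{\bullet})$ to exist in $\sfC$; creation for $R$ then yields $\llim b_{\bullet}$ in $\sfB$ with $R(\llim b_{\bullet})=\llim R(b_{\bullet})$; and Lemma~\ref{lem: Commas and the existence of limits therein} supplies $\llim D=(\llim c_{\bullet},\widetilde{f_{\bullet}},\llim b_{\bullet})$, whose image under $U$ is the pointwise limit of $UD$ — the one genuinely content-bearing check in that route being that the comparison arrow $\widetilde{f_{\bullet}}$ produced by the universal property of $\llim R(b_{\bullet})$ is precisely the canonical one exhibiting the pointwise limit in $\sfC^{\bfTwo}$.
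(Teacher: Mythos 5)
Your proposal is correct and follows essentially the same route as the paper: both arguments exploit the square $E\circ U=(\id_{\sfC}\times R)\circ\mathrm{pr}$, apply Corollary \ref{cor:creativity-from-continuity} to $\mathrm{pr}$ (and, in your case, again to $E=\mathsf{res}$), note that $\id_{\sfC}\times R$ creates limits factor-wise, and cancel; the paper merely compresses your explicit cancellation lemma into the observation that the square is a pullback. One small refinement: in the cancellation step, faithfulness of $E$ alone does not force $U\mu=\kappa$ (it says nothing about the vertex objects, and $E$ is not injective on objects); what you actually need there is the uniqueness clause of ``$E$ creates limits,'' which you have already established, so the argument stands.
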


\begin{proof}
See that the square 
\[
\xyR{3pc}\xyC{3pc}\xymatrix{\sfC\downarrow R\ar[r]\ar[d]\pullbackcorner & \sfC\times\sfB\ar[d]^{\id_{\sfC}\times R}\\
\sfC^{\bfTwo}\ar[r]_{\mathsf{res}} & \sfC\times\sfC
}
\]
is a pullback. Since $\sfC\downarrow R\longrightarrow\sfC\times\sfB$
creates all limits by Corollary \ref{cor:creativity-from-continuity},
and $\mathsf{res}$ creates all limits, to prove $\sfC\downarrow R\longrightarrow\sfC^{\bfTwo}$
creates all limits, it suffices that $\id_{\sfC}\times R$ creates
all limits, which it does as it creates the limits that $R$ does
- all of them.
\end{proof}


\begin{thebibliography}{ElmendorfKrizMandellMay}
\bibitem[Ara2]{Ara2}Ara, Dimitri ``Higher quasi-categories vs higher
Rezk spaces'' Journal of K-Theory 14(3) (2014), 701-749

\bibitem[Berger1]{Berger1}Berger, Clemens ``A Cellular Nerve for
Higher Categories'' Advances in Mathematics 169 , 118\textendash 175
(2002)

\bibitem[Berger2]{Berger2}- ``Iterated wreath product of the simplex
category and iterated loop spaces'' Adv. Math. 213 (2007), 230-270.

\bibitem[BergerMelliesWeber]{BergerMelliesWeber}Berger, Clemens,
P.A. Mellies, M. Weber ``Monads with arities and their associated
theories'' J. Pure Appl. Algebra 216 (2012), 2029-2048

\bibitem[BergnerRezk]{BergnerRezk}Bergner, Julie and Charles Rezk
''Reedy categories and the \textgreek{J}-construction'' Math. Z.
274 (2013), no. 1-2, 499\textendash 514.

\bibitem[Borceux]{Borceux}Borceux, F. (1994). ``Handbook of Categorical
Algebra'' (Encyclopedia of Mathematics and its Applications). Cambridge:
Cambridge University Press. doi:10.1017/CBO9780511525865

\bibitem[BourkeGarner]{BourkeGarner}Bourke, John and Richard Garner
''Monads and theories'' Advances in Mathematics 351 (2019), p.1024-{}-1071

\bibitem[BousfieldFriedlander]{BousfieldFriedlander}Bousfield, A.
K. and E. M. Friedlander. ``Homotopy theory of \textgreek{G}-spaces,
spectra, and bisimplicial sets'' Springer Lecture Notes in Math.,
Vol. 658, Springer, Berlin, 1978, pp. 80-130. 

\bibitem[Brown]{Brown}Brown, Kenneth S. ``Abstract Homotopy Theory
and Generalized Sheaf Cohomology'' Transactions of the American Mathematical
Society, Vol. 186 (1973), 419-458 (jstor:1996573).

\bibitem[ChenKrizPultr]{ChenKrizPultr}Chen, Ruian Igor Kriz and Ales
Pultr ``Kan's Combinatorial Spectra and their Sheaves Revisisted''
to appear in Theory and Applications of Categories

\bibitem[Cisinski1]{Cisinski}Cisinski, Denis-Charles ``Les préfaisceaux
comme modèles des types d'homotopie'' Astérisque 308 (2006), xxiv+392
pp.

\bibitem[CisinskiMaltsiniotis]{CisinskiMaltsiniotis}Cisinksi, Denis-Charles
and Georges Maltsiniotis \textquotedbl La catégorie \textgreek{J}
de Joyal est une catégorie test\textquotedbl , J. Pure Appl. Algebra,
215, pp. 962-982 (2011).

\bibitem[Elmendorf]{Elmendorf}Elmendorf, Anthony ``The Grassmanian
Geometry of spectra'' Journal of Pure and Applied Algebra Volume
54, Issue 1, October 1988, Pages 37-94

\bibitem[ElmendorfKrizMandellMay]{ElmendorfKrizMandellMay}A. D. Elmendorf
, I. Kriz , M.A. Mandell , J.P. May ''Rings, Modules, and Algebras
in Stable Homotopy Theory'' 1995, American Mathematical Society Surveys
and Monographs, American Mathematical Society Providence, R.I.

\bibitem[Grothendieck]{Grothendieck}Grothendieck, A. ``Pursuing
stacks'' 1983, manuscrit. À paraître dans la série Documents Mathématiques,
Soc. Math. France.

\bibitem[Hovey]{Hovey}-''Spectra and Symmetric Spectra in General
Model Categories'' Journal of Pure and Applied Algebra, Volume 165,
Issue 1, 23 November 2001, Pages 63-127

\bibitem[Joyal]{Joyal}Joyal, Andre ``Disks, duality and Theta-categories''
https://ncatlab.org/nlab/files/JoyalThetaCategories.pdf

\bibitem[Kan]{Kan}Kan, Daniel ``Semisimplicial spectra'' Illinois
Journal of Mathematics, Volume 7, Issue 3 (1963), 463-478.

\bibitem[Kelly]{Kelly}Kelly, G. Maxwell ``The Basic Concepts of
Enriched Category Theory.'' Reprints in Theory and Applications of
Categories {[}electronic only{]}. 2005. 

\bibitem[Lack]{Lack}Lack, S. ``Limits for Lax Morphisms'' Appl
Categor Struct 13, 189\textendash 203 (2005). https://doi.org/10.1007/s10485-005-2958-5

\bibitem[LessardThesis]{LessardThesis}Lessard, Paul Roy ``Spectra
as Locally Finite Pointed $\mathbf{Z}$-groupoids'' University of
Colorado at Boulder. August 2019: 169 pages; 22589944.

\bibitem[Lessard2]{Lessard2} - ``$\Z$-Categories II'' zREF

\bibitem[Ulmer]{Ulmer}Ulmer, Friedrich. \textquotedblleft Properties
of dense and relative adjoint functors.\textquotedblright{} Journal
of Algebra 8 (1968): 77-95.

\bibitem[Weiner]{Weiner}Wiener, Norbert. 1990. God and Golem, Inc.:
a comment on certain points where cybernetics impignes on religion.
Cambridge, Mass: Massachussetts Institute of Technology Press.
\end{thebibliography}
\end{document}